\newif\ifstandardtemplate\standardtemplatetrue%
\newif\ifelseviertemplate\elseviertemplatefalse%
\newif\ifspringertemplate\springertemplatefalse%
\newif\ifwileytemplate\wileytemplatefalse%
\newcommand{\mytitle}{On the pure traction problem of linear elasticity: 
a regularized formulation and
its robust approximation}

\newcommand{\myabstract}{The pure traction problem of elasticity appears frequently in engineering applications, and its complexity stems from the fact that its solution is unique only up to (infinitesimal) rigid body motions. When finite elements are employed to approximate this problem, one solution is typically singled out by applying carefully selected boundary conditions on the discrete model or by imposing global constraints on the deformation. However, neither of these strategies is both simple and computationally efficient. In this work, we propose a new approach to solving the pure traction problem that overcomes existing limitations. Our method builds on a regularized form of the problem whose solution is shown to be unique, converges to the original solution of minimal norm, and can be approximated with finite elements in a straightforward way, without additional degrees of freedom. Additionally, we analyze the situation in which the approximation of the solution domain renders the loading of the discretized problem non-equilibrated, making the problem ill-posed. In this case, we propose a regularized predictor--corrector finite element formulation that handles the incompatibilities of the loading, providing a solution that converges to that of the original Neumann problem as the mesh size and the regularizing parameter tend to zero. Numerical examples illustrate the effectiveness of the proposed approach for representative problems in mechanics where pure traction boundary conditions appear.}
\newcommand{\myack}{AK and IR acknowledge the funding received, respectively, from projects 
PLEC2023-010190 and PID2021-128812OB-I00 from from the Spanish Ministry of Science and Innovation. CG would like to acknowledge the funding received from the European Research Council through the ERC Consolidator Grant ``DATA-DRIVEN OFFSHORE'' (action no. 101083157).}

\newcommand{\mypackages}{%
  \usepackage{amssymb}
  \usepackage{amsmath}
  \usepackage{amsthm}
  \usepackage{graphicx}
  \usepackage{float}
  \graphicspath{{Figures/}{figures/}}
  \usepackage[dvipsnames]{xcolor}
  \usepackage{natbib}
  \usepackage{todonotes}
  \usepackage{siunitx}
  \usepackage{booktabs}
  \usepackage{changes}
  \definechangesauthor[name=Ignacio, color=Orange]{IRO}
  \usepackage{url}
  \usepackage{comment}
  \usepackage{tabularx}
\usepackage{siunitx}
\sisetup{
  detect-all,
  table-number-alignment = center,
  round-mode = places,
  round-precision = 3
}
}

\newcommand{\mymacros}{%
  \newcommand{\concept}[1]{\textbf{\emph{##1}}}
  \newcommand{\defined}{:=}
  \newcommand{\etanorm}[1]{\triple{##1}_{\eta}}
  \newcommand{\mbs}[1]{\boldsymbol{##1}}
  \newcommand{\mbsf}[1]{\mbs{\mathsf{##1}}}
  \newcommand{\pairing}[2]{\langle{##1},{##2}\rangle}
  \newcommand{\dd}[2]{\frac{\mathrm{d} ##1}{\mathrm{d} ##2}}
  \newcommand{\pd}[2]{\frac{\partial{##1}}{\partial{##2}}}
  \newcommand{\fd}[2]{\frac{\delta{##1}}{\delta{##2}}}
  \newcommand{\set}[1]{\left\{##1\right\}}
  \newcommand{\trace}{{\mathop{\mathrm{tr}}}}
  \newcommand{\triple}[1]{{\left\vert\kern-0.25ex\left\vert\kern-0.25ex\left\vert ##1
    \right\vert\kern-0.25ex\right\vert\kern-0.25ex\right\vert}}
  \newcommand{\uptohere}{\centerline{\textcolor{blue}{\rule{6cm}{0.2cm}}}}
  \let\oldLambda=\Lambda\renewcommand{\Lambda}{\mathit{\oldLambda}}
  \let\oldGamma=\Gamma\renewcommand{\Gamma}{\mathit{\oldGamma}}
}
\ifstandardtemplate%
\documentclass[10pt,a4paper]{article}
\mypackages%
\usepackage[noblocks]{authblk}
\mymacros%
\newcommand{\mybibstyle}{unsrt}

\newtheorem{theorem}    {Theorem}[section]
\newtheorem{lemma}      [theorem]{Lemma}

\theoremstyle{definition}
\newtheorem{definition} [theorem]{Definition}

\newtheorem{examplex}[theorem]{$\triangleright\;$Ejemplo}

\theoremstyle{remark}

\newtheorem{remarks}{Remarks}

\title{\mytitle}
\author[1,2]{Ahsan Kaleem}
\author[3]{Cristian Gebhardt}
\author[1,2]{Ignacio Romero}

\affil[1]{Dept. of Mechanical Engineering, Universidad Politécnica de Madrid,
  Jos\'{e} Guti\'{e}rrez Abascal, 2, 28006 Madrid, Spain}
\affil[2]{IMDEA Materials Institute, Eric Kandel 2, 28096 Getafe, Madrid, Spain}
\affil[3]{Geophysical Institute and Bergen Offshore Wind Centre, University of Bergen, Allégaten 70, 5007 Bergen, Norway}

\newenvironment{acknowledgements}{\section*{Acknowledgements}}{}

\begin{document}
\maketitle
\begin{abstract}
  \myabstract%
\end{abstract}
\fi

%--------------------------------------------------------------------------------------------------
%                                        The document
%--------------------------------------------------------------------------------------------------

\section{Introduction}
\label{sec-intro}
In elliptic boundary value problems, the \emph{pure Neumann} problem refers to the case where no
essential (Dirichlet) boundary conditions are imposed on the boundary of the solution
domain. In elasticity, the pure Neumann problem is often referred to as the \emph{traction} problem, for it
corresponds to situations in which the boundary of the body under analysis is free of displacement
constraints, and only tractions can be applied on it. In this work we will focus on the
analysis and numerical solution of non-trivial traction problems of small strain elasticity
but note that the situation is shared by other elliptic problems such as the heat conduction,
the calculation of the warping function in Saint Venant theory \cite{roccia2024}, and others.

This class of problems is of interest from the theoretical point of view, but also from the
practical one since there are physical situations that are only correctly described as pure
Neumann problems or easily approximated as such. For example, the deformation of a body that is
submerged in the sea (without any anchoring device to the seabed) needs to be studied as a pure
Neumann problem. Also, the deformation of an elastic solid placed on a flat surface and
subject only to forces parallel to the foundation might be aptly approximated as a Neumann problem.

There are no essential difficulties for the solution of a Neumann problem, at least in theory. The
problem has been fully analyzed for a long time, and questions on its well-posedness can be found in
standard references, e.~g. \cite{fichera1972in,gurtin72tv,duvaut1976ud,ciarlet1988ux}. Among other things, it can be shown that, in addition to the standard smoothness assumptions on the problem boundary, the bounds on the Lam\'e coefficients, and the regularity of the loading, the applied volumetric and
surface loads must satisfy some global equilibrium conditions. When these are satisfied, the existence,
uniqueness, and continuous dependence of the data can be proven in the space of finite energy
displacements modulo infinitesimal rigid body motions.

Whereas dealing with quotient spaces poses no theoretical difficulties, it complicates the design of
numerical methods that can approximate the solution to the traction problem. When finite
elements or any other Galerkin-type method is employed, it is easy to find a finite-dimensional
space of functions with finite energy, but working with equivalent classes directly might not
practical or possible.

To avoid the aforementioned difficulties and still solve pure traction problems, finite element
practitioners have resorted to projection techniques, that is, schemes that encapsulate a selection method that
picks, for any given \emph{discretized} traction problem, one and only one function in the equivalent class of
solutions. The simplest technique commonly employed in finite element approximations consists of
constraining the minimum number of nodal displacements that forbid any rigid body motion while
allowing all possible deformations. In two-dimensional problems, this amounts to constraining three
nodal displacements, while in three-dimensional problems a total of six nodal displacements must be
set to zero. To allow all possible deformations of the body, these nodal constraints must be
carefully selected (see, e.g., \cite{cook1995vq}), a strategy that is not well-defined in general \cite{babuska1985tm}
and can spoil the condition number of the stiffness matrix \cite{bochev2005vk,bochev2011tv}. Instead, a robust generalization
of these ideas requires the identification of the (infinitesimal) rigid body modes of the body and their penalization \cite{papadrakakis2001qp}.

An alternative strategy for solving pure traction problems is the so-called \emph{inertial relief}
method \cite{liao2011dx,pengqiu2017kd}. This numerical technique is available in several commercial
finite element codes and is
based on replacing the quasistatic, pure traction problem with an ancillary transient problem where
inertial forces are added to all rigid body motions. This procedure assumes a split of the
displacement field into ``elastic'' and rigid body displacements that can then be used to reduce the
equilibrium equations at the expense of modifying the structure of the stiffness matrix
\cite{pengqiu2017kd}. This approach is closely related to the \emph{null space} method for the
solution of semidefinite systems of linear equations (see, e.g., \cite{benzi2005zt}). In all these
cases, the kernel of the system matrix needs to be explicitly found, leaving a well-defined system
of equations only for the relevant components of the solution.

A third approach that can yield a unique solution to the traction problem in finite element
solutions consists of employing special algebraic solvers for the discrete equations of equilibrium.
The stiffness matrix in these problems has a non-trivial kernel, and special direct solvers can be
implemented to bypass the singularities \cite{farhat1998qw}; alternatively, (iterative)
Krylov-type methods can be devised to converge to the solution of minimal norm of a singular linear
system of equations \cite{kaasschieter1988sn,elman2005uy,kuchta2018xy}.

Yet another method that can be used to obtain approximated solutions to the pure Neumann problem
involves transforming it into a saddle point problem with constraints that enforce the
projection of the quotient space as described in detail in the work by Bochev and Lehoucq
\cite{bochev2005vk,bochev2011tv}.
In addition to providing a rigorous framework to understand the effects of using point constraints for removing rigid body
modes, this work introduces a constrained and regularized formulation that cast the Neumann
problem in a form that is suitable for approximation and analysis. In particular, the regularized
formulation that they propose eliminates the need for Lagrange multipliers and shows some
resemblance to the methods advocated in the current work. In their work, however, a weighting
function needs to be introduced that may couple all the degrees of freedom of the model, thus
spoiling the sparsity pattern of the stiffness matrix leading, for the most obvious choice, to a
\emph{full} stiffness matrix. The idea of penalizing the rigid body motions has been explored
\cite{necas1981sw,kirzek1994fv,ivanov2019qf}, but in all these solutions the sparsity of the stiffness matrix
is spoiled.

Finally, a solution to the Neumann problem that is closely related to the one presented here
was provided by Day \cite{dai2007ol}. This work eliminates the singularity of the Neumann problem
by adding penalty terms involving the trace of the solution on the domain's boundary. Moreover,
an iterative solution is proposed that alleviates the ill-conditioning problem of the stiffness
matrix when the penalty parameter is small. An inconvenient drawback of this formulation --- at
least from the implementation point of view --- is that it requires the computation of
a boundary integral and special boundary elements need to be introduced in the mesh for that
purpose.

In this work we present an alternative approximation method for the pure traction problem in
elasticity. The basic idea is to regularize the energy of the original Neumann problem in such a way
that the new energy has a unique minimizer. Crucially, the minimizer of the perturbed energy can
be shown to converge to one of the minimizers of the original (non-strickly convex) Neumann energy as the regularizing
parameter goes to zero. More precisely, the regularized solution converges to the solution of minimal
$L^2$ norm of the pure traction problem. Remarkably, the regularized energy is amenable to numerical discretization, thus providing a convenient avenue for the formulation of well-posed, convergent finite element discretizations of the traction problem without the need for Lagrange multipliers and without modifying the sparsity pattern of
the stiffness matrix of the discrete Neumann problem.

When attempting to find an approximate solution of a pure traction problem, the domain of analysis
is discretized using, for example, a polyhedral mesh. If the original domain itself is not
polyhedral, it may happen that it is not exactly represented by the mesh, irrespective of the mesh
size. These errors are critical for the Neumann problem because the loads on the discrete domain
might not be compatible and the problem, strictly speaking, would not be well-posed. In the last
part of the article we analyze this situation and we show that a new predictor/corrector method can
be proposed that, based on the regularized formulation, yields a convergent approximation to the Neumann problem, even when the
loads become ``slightly'' non-equilibrated.

The remainder of the article has the following structure. In Section~\ref{sec-mot}, we present, in the simplest possible setting, the idea that justifies the method analyzed in the article. Then, starting from the general problem statement as described in Section~\ref{sec-neumann}, a regularized Neumann problem is presented in Section~\ref{sec-regularized} whose solution approaches the one of the original problem, as shown in Section~\ref{sec-analysis}. This analytical result lays the ground for simple and consistent approximation schemes presented in Section~\ref{sec-fem} for the Neumann
problem, and in Section~\ref{sec-noneq} for slightly non-equilibrated problems. Finally, Section~\ref{sec-examples} illustrates the properties of the proposed method by
means of three numerical examples. The article closes with a summary of the main results and some final comments in Section~\ref{sec-summary}.

Notation: as customary in mechanics, boldface symbols refer to vectors or tensors. The space of
functions that are (Lebesgue) square integrable over the set $\Omega\in\mathbb{R}^d$ is indicated as $L^2(\Omega)$; vector
fields $\mbs{v}:\Omega\to\mathbb{R}^d$ with components in $L^2(\Omega)$ are said to belong to $[L^2(\Omega)]^d$. Similarly,
the Hilbert space of functions that are in $L^2(\Omega)$ and whose gradient is $[L^2(\Omega)]^d$ are said
to belong to $H^1(\Omega)$. Finally, vector fields on $\Omega$ with components in $H^1(\Omega)$ belong
to the set $[H^1(\Omega)]^d$. In all the bounds, the symbol $C$ indicates a generic constant, possibly
attaining different values, but not depending on the meshsize $h$ or the regularizing
parameter~$\eta$. Similarly, the notation $a \lesssim b$ would be used to replace $a \le C\,b$,
where $C$ is a constant with the properties previously indicated.

% ---------------------------------------------------------------------------------------
\section{Motivation}
\label{sec-mot}
By way of motivation, we consider a simple mechanical system that can be analyzed as
a pure Neumann problem and review several ways of finding \emph{a} solution for it. While the
problem itself is trivial, some of the difficulties and solution strategies for more complex
problems can already be identified in this simple context.

\begin{figure}[ht]
  \centering
  \includegraphics[width=0.9\textwidth]{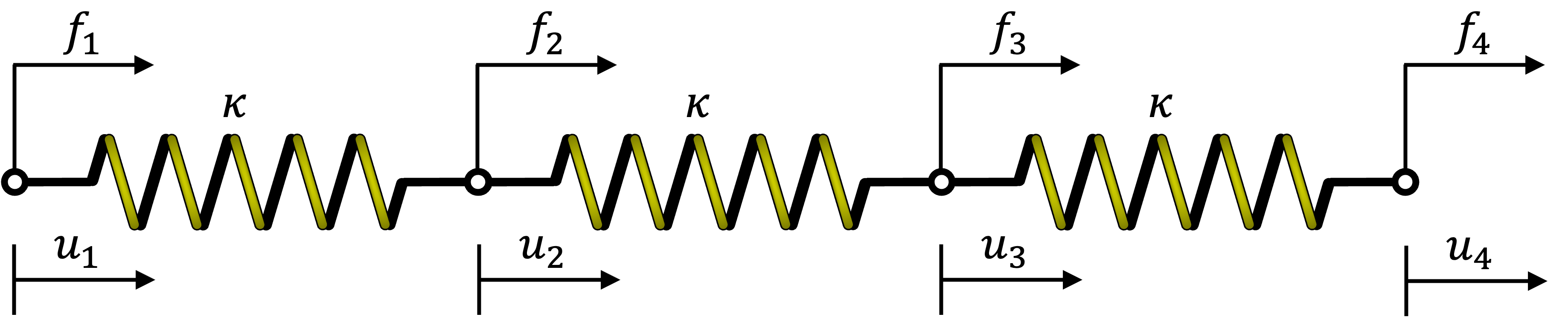}
  \caption{Simple mechanical problem.}
  \label{fig-spring3}
\end{figure}

Specifically, we consider a structure consisting of $N=4$ nodes connected by $N-1$ elastic springs
of constant $\kappa>0$, depicted in Fig.~\ref{fig-spring3}, subject to forces $\mbs{f} = \lbrace
f_1, f_2, f_3, f_4 \rbrace^T$ on the nodes. The latter are only allowed to move horizontally and we
collect their corresponding displacements in the vector $\mbs{u}=\lbrace u_1,u_2, u_{3}, u_4\rbrace^T$.

This system has four degrees of freedom, no displacement (Dirichlet) boundary conditions, and its potential energy function is
\begin{equation}
  V(\mbs{u}) \defined
  \sum_{k=1}^{N-1} \frac{\kappa}{2}\, (u_{k+1}-u_k)^2  - \mbs{u}\cdot\mbs{f}\;,
\end{equation}
where here and later we have employed a dot to indicate the scalar product of vectors of the same dimension.
If the pure traction problem is to have a solution, or rather, infinite solutions, the external loads $\mbs{f}$ must be
equilibrated, that is $\sum f_i=0$. For concreteness, let us assume that $-f_1=f_4=f, f_2=f_3=0$,
for some scalar $f$. Then, any displacement vector $\mbs{u}$ satisfying $u_{k+1}-u_k = f/\kappa$ for
$1\le k \le N-1$ is a solution to the problem.

To \emph{select} one of the infinite solutions to the problem, it suffices to pick any node $1\le J \le 4$ and impose $u_J=0$. Also, all these solutions are minimizers of
the potential energy function, hence, they satisfy the optimality condition $\nabla V(\mbs{u})=\mbs{0}$, or equivalently,
\begin{equation}
  \label{eq-matrix-k}
  \begin{bmatrix}
    \kappa & -\kappa & 0 & 0\\
    -\kappa & 2\kappa & -\kappa & 0\\
    0       & -\kappa & 2\kappa & -\kappa\\
    0       &     0   & -\kappa & \kappa\\
  \end{bmatrix}
  \begin{Bmatrix}
    u_1 \\ u_2 \\ u_3 \\ u_{4}
  \end{Bmatrix}
  =
  \begin{Bmatrix}
    -f \\ 0 \\ 0 \\ f
  \end{Bmatrix}\ .
\end{equation}
The stiffness matrix of the problem is, of course, singular, anticipating potential difficulties in
the numerical solution to this and similar problems. We can select one of the infinite solutions to the problem by imposing, for example, that
the mean displacement must be equal to zero. To solve this constrained problem, we
must find the saddle point of the Lagrangian function
\begin{equation}
    L(\mbs{u}, \lambda) \defined V(\mbs{u}) + \lambda\, \langle \mbs{u} \rangle \ ,
\end{equation}
where $\langle \mbs{u} \rangle \defined (u_1+u_2+u_3+u_4)/4$. The optimality condition of this new problem
is
\begin{equation}
  \begin{bmatrix}
    \kappa & -\kappa & 0 & 0 & 1/4\\
    -\kappa & 2\kappa & -\kappa & 0& 1/4\\
    0       & -\kappa & 2\kappa & -\kappa& 1/4\\
    0       &     0   & -\kappa & \kappa& 1/4\\
    1/4     & 1/4     & 1/4     & 1/4   & 0
  \end{bmatrix}
  \begin{Bmatrix}
    u_1 \\ u_2 \\ u_3 \\ u_{4} \\ \lambda
  \end{Bmatrix}
  =
  \begin{Bmatrix}
    -f \\ 0 \\ 0 \\ f \\ 0
  \end{Bmatrix}\ ,
\end{equation}
whose unique solution is
\begin{equation}
  \label{eq-mot-lambda}
    u_1^{\lambda} = - \frac{3f}{2 \kappa}, \qquad
    u_2^{\lambda} = - \frac{f}{2 \kappa}, \qquad
    u_3^{\lambda} = \frac{f}{2 \kappa}, \qquad
    u_4^{\lambda} = \frac{3f}{2 \kappa}, \qquad
    \lambda = 0\ .
\end{equation}
Following this approach, we can transform a problem
with infinite solutions into another one, with an additional Lagrange multiplier, but with a unique
solution. In fact, we could have constrained the mean value $\langle \mbs{u} \rangle$ to any other
value and still obtain a unique solution.

Alternatively, and to avoid the new Lagrange multiplier, we could search for a penalty solution that
minimizes the function
\begin{equation}
    V_\alpha(\mbs{u}) \defined V(\mbs{u}) + \frac{\alpha}{2}\langle \mbs{u} \rangle^2\ ,
\end{equation}
for some large positive parameter~$\alpha$. The optimality condition for this function reads
\begin{equation}
  \label{eq-matrix}
  \begin{bmatrix}
    \kappa + \alpha& -\kappa+ \alpha & \alpha & \alpha\\
    -\kappa + \alpha & 2\kappa + \alpha & -\kappa + \alpha & \alpha\\
    \alpha   & -\kappa + \alpha& 2\kappa + \alpha& -\kappa + \alpha\\
    \alpha   &  \alpha   & -\kappa + \alpha& \kappa + \alpha\\
  \end{bmatrix}
  \begin{Bmatrix}
    u_1 \\ u_2 \\ u_3 \\ u_{4}
  \end{Bmatrix}
  =
  \begin{Bmatrix}
    -f \\ 0 \\ 0 \\ f
  \end{Bmatrix}\ ,
\end{equation}
that employs a regular stiffness matrix and whose solution is again
\begin{equation}
  \label{eq-mot-alpha}
    u_1^{\alpha} = - \frac{3f}{2 \kappa}, \qquad
    u_2^{\alpha} = - \frac{f}{2 \kappa}, \qquad
    u_3^{\alpha} = \frac{f}{2 \kappa}, \qquad
    u_4^{\alpha} = \frac{3f}{2 \kappa} .
\end{equation}

We have seen that both the Lagrange multiplier and the penalty methods remove the singularity of the
stiffness matrix, effectively providing a route for the solution of the pure traction problem. Both
strategies, however, spoil the sparsity of the stiffness matrix. In fact, the band of the matrix in
the Lagrange multipliers method changes from $2$ to $N$, and the matrix in the penalty method is
completely full. Whereas for this illustrative example the band size and/or the sparsity of the
stiffness matrix is completely irrelevant, these two solutions prove extremely expensive --- and thus
impractical --- for Engineering problems with many degrees of freedom.

There is, however, another path that relies on a perturbed potential of the form
\begin{equation}
\label{eq-mot-veta}
    V_\eta(\mbs{u}) \defined V(\mbs{u}) + \frac{\eta}{2} \|\mbs{u}\|^2_2\ ,
\end{equation}
where $\eta$ is a \emph{small} positive parameter and $\|\cdot\|_2$ stands for the Euclidean norm. The optimality condition for this regularized function reads
\begin{equation}
  \label{eq-matrix-eta}
  \begin{bmatrix}
    \kappa+\eta & -\kappa & 0 & 0\\
    -\kappa & 2\kappa+\eta & -\kappa & 0\\
    0       & -\kappa & 2\kappa+\eta & -\kappa\\
    0       &     0   & -\kappa & \kappa+\eta\\
  \end{bmatrix}
  \begin{Bmatrix}
    u_1 \\ u_2 \\ u_3 \\ u_{4}
  \end{Bmatrix}
  =
  \begin{Bmatrix}
    -f \\ 0 \\ 0 \\ f
  \end{Bmatrix}\ ,
\end{equation}
and it is trivial to verify that the stiffness matrix is now regular. The unique solution to this
matrix equation is
\begin{subequations}
  \begin{align}
    u_1^\eta &=-\frac{3f}{2\,\kappa}\; \frac{ 1 + \frac{\eta}{3\kappa}}{1 + \frac{2\eta}{\kappa} + \frac{\eta^2}{2\kappa^2}}\ ,\\
    u_2^\eta &=-\frac{f}{\kappa}\; \frac{ 1 + \frac{\eta}{3\kappa}}{1 + \frac{2\eta}{\kappa} + \frac{\eta^2}{2\kappa^2}}\ ,\\
    u_3^\eta &= \frac{f}{\kappa}\; \frac{ 1 + \frac{\eta}{3\kappa}}{1 + \frac{2\eta}{\kappa} + \frac{\eta^2}{2\kappa^2}}\ ,\\
    u_4^\eta &= \frac{3f}{2\,\kappa}\; \frac{ 1 + \frac{\eta}{3\kappa}}{1 + \frac{2\eta}{\kappa} + \frac{\eta^2}{2\kappa^2}}\ .
  \end{align}
\end{subequations}
The solution corresponding to the regularized potential energy consistently approximates the
solutions $\mbs{u}^\lambda\equiv\mbs{u}^\alpha$, i. e.,
\begin{equation}
    \lim_{\eta \downarrow 0} \mbs{u}^\eta = \mbs{u}^\lambda = \mbs{u}^\alpha \ ,
\end{equation}
while also satisfying $\langle \mbs{u}^\eta \rangle = 0$. For this simple problem, the error of the regularized solution relative to the exact solution with zero mean can be explicitly calculated
\begin{equation}
    \|\mbs{u}^\eta-\mbs{u}^\lambda \|_2 =
    \frac{\eta \,\left|f\right|\,\sqrt{5\,\eta ^2+34\,\eta \,\kappa+58\,\kappa^2}}{\kappa\,\left(\eta ^2+4\,\eta \,\kappa+2\,\kappa^2\right)}\ .
    %\leq C\; \eta\; \|\mbs{f}\|_2\ .
\end{equation}

This result shows that the displacements $\mbs{u}^{\eta}$ may be as close to $\mbs{u}^\lambda$ as desired.
Crucially, and in contrast with the approaches based on Lagrange multipliers or penalization, the
regular stiffness matrix of Eq.~\eqref{eq-matrix-eta} has the same sparsity pattern as the
(singular) stiffness matrix of Eq.~\eqref{eq-matrix-k}. This fact suggests that the
regularization~\eqref{eq-mot-veta} can be
used as a means to finding approximations to the pure traction problems with a negligible additional cost.
In the remainder of this work, we will prove that,
indeed, the regularization idea can be generalized to (small strain) elasticity problems, and that
it can be used for efficient finite element approximations.

%--------------------------------------

\section{The pure Neumann problem for elasticity}
\label{sec-neumann}
In this article, we study the solution of the pure Neumann problem in small strain elasticity. To
define it, consider a domain $\Omega\in \mathbb{R}^d$, with $d=2$ or $3$, and boundary $\partial\Omega$.
Given a field of
volume forces $\mbs{f}:\Omega\to \mathbb{R}^d$ and a field of surface tractions
$\mbs{t}:\partial\Omega\to \mathbb{R}^d$, we seek for the displacement field $\mbs{u}:\Omega\to
\mathbb{R}^d$ that minimizes the potential energy
\begin{equation}
  \label{eq-potential}
  V(\mbs{u}) = \int_{\Omega} W(\nabla^s\mbs{u}(\mbs{x}))\; dV
  - \int_{\Omega} \mbs{f}(\mbs{x})\cdot \mbs{u}(\mbs{x}) \; dV
  - \int_{\partial\Omega} \mbs{t}(\mbs{x})\cdot \mbs{u}(\mbs{x}) \; dA .
\end{equation}
In this expression, $\mbs{x}$ denotes points on $\Omega$, $\nabla^s$ is the symmetric gradient
operator and $W$ is the stored energy density function.

The infinitesimal rigid body motions play a crucial role in the development that follows. For
completeness, let us review this basic concept and the related \emph{compatibility} condition.

\begin{definition}
  Let $\mathcal{R}$ be the space of infinitesimal rigid body motions, that is, vector fields $\mbs{u}\in [C^{\infty}(\Omega)]^d$ of the form
  \begin{equation}
  \label{eq-irbm}
  \mbs{\rho}(\mbs{x}) = \mbs{\alpha} + \mbs{\beta}\, \mbs{x}\ ,
\end{equation}
where $\mbs{\alpha}\in \mathbb{R}^d$ and $\mbs{\beta}$ is a skew symmetric second-order tensor on $\mathbb{R}^{d}$.
\end{definition}

\begin{definition}
Let $\mbs{u},\mbs{v}$ be two vector fields in $[L^2(\Omega)]^d$. We say that they are (infinitesimally) rigid body
equivalent, and we indicate it as $\mbs{u}\sim \mbs{v}$, if there exists a rigid body motion~$\mbs{\rho}$  such that
  \begin{equation}
    \| \mbs{u} + \mbs{\rho}  - \mbs{v} \|_{[L^2(\Omega)]^d} = 0\ .
  \end{equation}
This is an equivalence relation that partitions functional spaces defined on $\Omega$ into equivalence classes. We define $\mathcal{S}$ as the quotient space of $[H^1(\Omega)]^d$ modulo rigid body motions and we use the
notation $[\mbs{u}]\in\mathcal{S}$ to denote the vector fields in $[H^1(\Omega)]^d$ that are rigid
body equivalent to~$\mbs{u}$.
\end{definition}

\begin{remarks}
  \item Since the space of all infinitesimal rigid body motions is a closed subspace of  $[H^1(\Omega)]^d$, $\mathcal{S}$ becomes a normed space with respect to the norm
\begin{equation}
    \| [\mbs{u}] \|_{\mathcal{S}} = \inf_{\mbs{v} \in [\mbs{u}]} \| \mbs{v} \|_{[H^1(\Omega)]^d}.
\end{equation}
% (Zeidler, 1995, vol. 109, p. 185, Prop. 2)

\item The space $\mathcal{R}$ has finite dimension. Hence, all the norms on $\mathcal{R}$ are equivalent.
\end{remarks}

\begin{definition}
  Let $\mbs{f}:\Omega\to \mathbb{R}^d$ be a vector field of volumetric forces and
  $\mbs{t}:\partial\Omega\to \mathbb{R}^d$ a vector field of tractions defined on the boundary
  of~$\Omega$. This loading system is \emph{equilibrated} or  \emph{compatible} if
  \begin{equation}
    \label{eq-equilibrium2}
    \int_{\Omega} \mbs{f}(\mbs{x})\cdot \mbs{\rho}(\mbs{x}) \;dV +
    \int_{\partial\Omega} \mbs{t}(\mbs{x})\cdot \mbs{\rho}(\mbs{x}) \;dA = \mbs{0}\ ,
  \end{equation}
  for all rigid body motions~$\mbs{\rho}$. If $d=3$, the previous condition is equivalent to
    \begin{equation}
    \label{eq-equilibrium}
    \begin{split}
    \int_{\Omega} \mbs{f}(\mbs{x}) \;dV + \int_{\partial\Omega} \mbs{t}(\mbs{x}) \;dA &= \mbs{0}\ ,\\
    \int_{\Omega} \mbs{x} \times \mbs{f}(\mbs{x}) \;dV +
    \int_{\partial\Omega} \mbs{x} \times\mbs{t}(\mbs{x}) \;dA &= \mbs{0}\ .\\
    \end{split}
  \end{equation}
\end{definition}

The classical elasticity problem has been thoroughly analyzed in the
literature~\cite{fichera1972in,ciarlet1988ux}. We summarize next the main result regarding the
well-posedness of the Neumann problem:

\begin{theorem}[\cite{ciarlet1978tm}]
  Let $\Omega$ be an open, bounded set in $\mathbb{R}^d$. Let $\partial\Omega$,
  the boundary of $\Omega$ be Lipschitz, and consider a homogeneous, isotropic elastic body with
  stored energy function $W:\text{Sym}^{d\times d}\to \mathbb{R}$
  \begin{equation}
    \label{eq-stored-energy}
    W(\mbs{\epsilon}) = \frac{\lambda}{2} (\mathrm{tr}[\mbs{\epsilon}])^2 + \mu\, \mbs{\epsilon}: \mbs{\epsilon}\ ,
  \end{equation}
  where $\mathrm{tr}[\cdot]$ is the trace operator,  $\lambda$ and $\mu$ are the Lamé constants that
  satisfy $\lambda,\mu>0$, and the colon product denotes the contraction of second-order
  tensors. 
  %Let $\mbs{f}\in [L^{p}(\Omega)]^d$ and $\mbs{t}\in [L^{q}(\partial\Omega)]^d$ be compatible
  %loads with $(p,q)=(6/5,4/3)$ if $d=3$, and $(p,q)=(2,2)$ if $d=2$.
  Let $\mbs{f}$ and $\mbs{t}$ be compatible loads.
  Then, the pure traction problem has a unique solution $[\mbs{u}]\in \mathcal{S}$ that depends
  continuously on the loading and corresponds to a minimizer of the energy~$V:\mathcal{S}\to \mathbb{R}$.
\end{theorem}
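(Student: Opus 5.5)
The plan is to cast the problem as a variational problem on the quotient space and apply the Lax--Milgram theorem, or equivalently the direct method for a strictly convex, coercive quadratic functional. Write $V(\mbs{u}) = \tfrac12 a(\mbs{u},\mbs{u}) - \ell(\mbs{u})$ with
\begin{equation*}
  a(\mbs{u},\mbs{v}) = \int_\Omega \bigl(\lambda\, \mathrm{tr}[\nabla^s\mbs{u}]\,\mathrm{tr}[\nabla^s\mbs{v}] + 2\mu\, \nabla^s\mbs{u}:\nabla^s\mbs{v}\bigr)\, dV,
  \qquad
  \ell(\mbs{v}) = \int_\Omega \mbs{f}\cdot\mbs{v}\, dV + \int_{\partial\Omega}\mbs{t}\cdot\mbs{v}\, dA .
\end{equation*}
The first step is to check that both objects descend to $\mathcal{S}$. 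For $a$ this holds because $\nabla^s\mbs{\rho}=\mbs{0}$ for every $\mbs{\rho}\in\mathcal{R}$. For $\ell$ it is exactly the compatibility condition \eqref{eq-equilibrium2}: $\ell(\mbs{v}+\mbs{\rho})=\ell(\mbs{v})$. Boundedness of $\ell$ on $\mathcal{S}$ follows from the trace theorem on the Lipschitz domain, with $\mbs{f}\in[L^2(\Omega)]^d$ and $\mbs{t}\in[L^2(\partial\Omega)]^d$ (or the dual Sobolev exponents). Taking the infimum over the class gives $|\ell([\mbs{v}])|\le C\|[\mbs{v}]\|_{\mathcal{S}}$. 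Boundedness of $a$ is immediate.

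The second step, and the main obstacle, is coercivity of $a$ on $\mathcal{S}$. Since $\lambda,\mu>0$, we have $a(\mbs{v},\mbs{v})\ge 2\mu\|\nabla^s\mbs{v}\|^2_{L^2}$, so it suffices to prove the Korn-type inequality
\begin{equation*}
  \inf_{\mbs{\rho}\in\mathcal{R}}\|\mbs{v}+\mbs{\rho}\|_{[H^1(\Omega)]^d}\le C\,\|\nabla^s\mbs{v}\|_{L^2}\qquad\forall\,\mbs{v}\in[H^1(\Omega)]^d .
\end{equation*}
I would first invoke Korn's second inequality on Lipschitz domains, $\|\mbs{v}\|_{H^1}\le C(\|\mbs{v}\|_{L^2}+\|\nabla^s\mbs{v}\|_{L^2})$, taking it as a classical result. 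I would then argue by contradiction. Suppose there are $\mbs{v}_n$ that are $L^2$-orthogonal to $\mathcal{R}$ (each class contains such a representative), with $\|\mbs{v}_n\|_{H^1}=1$ and $\|\nabla^s\mbs{v}_n\|\to0$. By Rellich compactness a subsequence converges in $L^2$. Korn's inequality then makes the subsequence Cauchy in $H^1$, so its limit $\mbs{v}$ has $\|\mbs{v}\|_{H^1}=1$ and $\nabla^s\mbs{v}=\mbs{0}$. A connected domain is needed here; I would assume $\Omega$ is connected, as implicit in the definition of $\mathcal{R}$, since otherwise $\mathcal{R}$ must be taken piecewise. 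Under that assumption $\mbs{v}\in\mathcal{R}$, and $\mbs{v}$ is also orthogonal to $\mathcal{R}$, so $\mbs{v}=\mbs{0}$, a contradiction. The fact that $\nabla^s\mbs{v}=\mbs{0}$ forces $\mbs{v}\in\mathcal{R}$ follows from the identity $\partial_j\partial_k v_i=\partial_j\epsilon_{ik}+\partial_k\epsilon_{ij}-\partial_i\epsilon_{jk}$ in the distributional sense.

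Completeness of $\mathcal{S}$ follows because $\mathcal{R}$ is finite dimensional, hence closed, as in the remarks. Lax--Milgram then gives a unique $[\mbs{u}]\in\mathcal{S}$ with $a([\mbs{u}],[\mbs{v}])=\ell([\mbs{v}])$ for all $[\mbs{v}]$, together with the estimate $\|[\mbs{u}]\|_{\mathcal{S}}\le C\|\ell\|_{\mathcal{S}'}\lesssim \|\mbs{f}\|+\|\mbs{t}\|$, which is the continuous dependence on the loading. Since $a$ is symmetric and coercive, $V([\mbs{u}]+[\mbs{w}])=V([\mbs{u}])+\tfrac12 a(\mbs{w},\mbs{w})$, so $[\mbs{u}]$ is the unique minimizer of $V$ on $\mathcal{S}$.
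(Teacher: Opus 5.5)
Your proposal is correct. The paper gives no proof of this theorem; it is quoted from Ciarlet. Your argument (Lax--Milgram on $\mathcal{S}$, with coercivity from a Korn inequality modulo $\mathcal{R}$ proved by Rellich compactness) is the standard one from that source, and its key step is essentially the paper's Lemma~\ref{lem-KPW}. The only difference is that you normalize in $H^1$ and use Korn's second inequality to obtain strong convergence, whereas the paper normalizes in $L^2$ and uses weak lower semicontinuity. Your remark that $\Omega$ must be connected is well taken: the statement as written omits this hypothesis, which is needed both here and for the paper's claim in Lemma~\ref{lem-KPW} that $\mathcal{R}$ is the full kernel of $\nabla^s$.
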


The weak form that corresponds to the minimization of~\eqref{eq-potential} consists in finding
$[\mbs{u}]\in \mathcal{S}$ such that
\begin{equation}
\label{eq-weak-neumann}
    a(\mbs{u}, \mbs{v}) = \ell(\mbs{v})\ ,
\end{equation}
for all $[\mbs{v}]\in \mathcal{S}$, where
\begin{subequations}
  \label{eq-forms}
  \begin{align}
a(\mbs{u}, \mbs{v})
  &\defined
\int_{\Omega} (\mu \nabla^s \mbs{u}(\mbs{x}): \nabla^s \mbs{v}(\mbs{x}) +
    \lambda \nabla\cdot \mbs{u}(\mbs{x})\;\nabla\cdot \mbs{v}(\mbs{x})) \; dV\ ,
    \label{eq-forms1}\\
\ell(\mbs{v}) &\defined
\int_{\Omega} \mbs{f}(\mbs{x})\cdot \mbs{v}(\mbs{x}) \; dV
+
                \int_{\partial\Omega} \mbs{t}(\mbs{x})\cdot \mbs{v}(\mbs{x}) \; dA\; .
                \label{eq-forms2}
  \end{align}
\end{subequations}
In view of Eq.~\eqref{eq-equilibrium}, $\ell(\mbs{\rho}) = \mbs{0}$, for
any $\mbs{\rho}\in \mathcal{R}$. Also, we note that $\nabla^s\mbs{\rho}=\mbs{0}$, thus
\begin{equation}
\label{eq-a-orthogonal}
a(\mbs{v},\mbs{\rho}) = a(\mbs{\rho},\mbs{v}) = 0\ ,
\end{equation}
for arbitrary $\mbs{\rho}\in \mathcal{R}$ and displacement vector fields~$\mbs{v}\in [H^1(\Omega)]^d$.

The main difficulty for finding a numerical approximation to the solution of the Neumann problem is precisely
factoring out $\mathcal{R}$ from the space where the solution is sought. In finite element analyses, for
example, this projection has often been accomplished by introducing, for each Neumman problem, carefully selected
boundary conditions that remove the rigid body motions from the solution, effectively selecting a
single displacement field from the quotient space $[H^1(\Omega)]^d/\mathcal{R}$. These constraints have to
be imposed without inadvertently introducing any undesired strains on the solution, hence the
procedure is non-systematic and prone to error.

\section{A regularized Neumann problem}
\label{sec-regularized}
The Neumann problem of elasticity, as formulated in Section~\ref{sec-neumann}, has a unique solution
that is an equivalent class of rigid-body-equivalent functions in a Hilbert space. Anticipating the formulation of a
finite element approximation of the problem, we postulate a new boundary value problem obtained by
regularizing the original Neumann problem. This new formulation is posed on a standard Hilbert
space, so its Galerkin discretization is straightforward.

The sought vector field $\mbs{u}_{\eta}\in [H^1(\Omega)]^d$ that minimizes
the regularized potential
\begin{equation}
\label{eq-veta}
V_{\eta}(\mbs{v}) \defined V(\mbs{v}) + \frac{\eta}{2} \| \mbs{v} \|^2_{[L^2(\Omega)]^d}\ ,
\end{equation}
for some \textit{small} $\eta>0$. Alternatively, $\mbs{u}_{\eta}\in[H^1(\Omega)]^d$ is the function that satisfies
\begin{equation}
\label{eq-weak-regularized}
  a_{\eta}(\mbs{u}_{\eta}, \mbs{v}) = \ell(\mbs{v})\ ,
\end{equation}
for all $\mbs{v}\in[H^1(\Omega)]^d$, where $a_{\eta}:[H^1(\Omega)]^d \times [H^1(\Omega)]^d\to \mathbb{R}$ is
defined
as
\begin{equation}
  \label{eq-a-eta}
  a_{\eta}(\mbs{v},\mbs{w}) \defined
  a(\mbs{v},\mbs{w}) + \eta\, (\mbs{v},\mbs{w})\ ,
\end{equation}
and we have introduced the $L^2$-product
\begin{equation}
\label{eq-L2}
   (\mbs{u},\mbs{v}) \defined  \int_{\Omega} \mbs{u}(\mbs{x})\cdot \mbs{v}(\mbs{x})\; dV\
\end{equation}
between vector fields in $[L^2(\Omega)]^d$. The regularized Neumann problem is clearly similar to
the pure traction problem of interest in this work. Prior to studying the relationship between their
two solutions, we note the following fundamental result, whose proof is postponed to Section~\ref{sec-analysis}

\begin{theorem}
  \label{theo-reg}
  For every $\eta>0$, the functional $V_{\eta}$ has a unique minimizer $\mbs{u}_{\eta}\in [H^1(\Omega)]^d$
  that depends continuously on the loading. Equivalently, the variational problem
  \eqref{eq-weak-regularized} is   well-posed.
\end{theorem}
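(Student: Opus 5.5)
The plan is to apply the Lax--Milgram lemma to the variational problem \eqref{eq-weak-regularized} on the Hilbert space $[H^1(\Omega)]^d$. Since $a_\eta$ is symmetric, Lax--Milgram also identifies the solution $\mbs{u}_\eta$ as the unique minimizer of the strictly convex quadratic functional $V_\eta$. This settles the equivalence claimed in Theorem~\ref{theo-reg}. Three ingredients are needed: continuity of $a_\eta$, continuity of $\ell$, and coercivity of $a_\eta$. Continuous dependence on the loading then follows from the standard Lax--Milgram estimate $\|\mbs{u}_\eta\|_{[H^1(\Omega)]^d}\le \|\ell\|_*/\gamma_\eta$, where $\gamma_\eta$ is the coercivity constant.

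Continuity is routine. By Cauchy--Schwarz, $|a(\mbs{v},\mbs{w})|\le C(\lambda,\mu)\|\mbs{v}\|_{H^1}\|\mbs{w}\|_{H^1}$, since $|\nabla^s\mbs{v}|\le|\nabla\mbs{v}|$ and $|\nabla\cdot\mbs{v}|\le\sqrt d\,|\nabla\mbs{v}|$. Also $\eta|(\mbs{v},\mbs{w})|\le\eta\|\mbs{v}\|_{L^2}\|\mbs{w}\|_{L^2}$. For $\ell$, I would assume $\mbs{f}\in[L^2(\Omega)]^d$ and $\mbs{t}\in[L^2(\partial\Omega)]^d$ (or the dual exponents of the Sobolev and trace embeddings). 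Then the trace theorem on the Lipschitz boundary gives $|\ell(\mbs{v})|\le C(\|\mbs{f}\|+\|\mbs{t}\|)\|\mbs{v}\|_{H^1}$. Notably, compatibility of the loads is \emph{not} needed here: the regularized problem is well posed for any loading.

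The main step is coercivity. Since $\lambda,\mu>0$,
\begin{equation*}
a_\eta(\mbs{v},\mbs{v})\ \ge\ \mu\|\nabla^s\mbs{v}\|^2_{L^2}+\eta\|\mbs{v}\|^2_{L^2}\ \ge\ \min(\mu,\eta)\left(\|\nabla^s\mbs{v}\|^2_{L^2}+\|\mbs{v}\|^2_{L^2}\right).
\end{equation*}
I would then invoke Korn's second inequality, valid on bounded Lipschitz domains:
\begin{equation*}
\|\mbs{v}\|^2_{H^1}\le C_K\left(\|\nabla^s\mbs{v}\|^2_{L^2}+\|\mbs{v}\|^2_{L^2}\right).
\end{equation*}
This yields $a_\eta(\mbs{v},\mbs{v})\ge \gamma_\eta\|\mbs{v}\|^2_{H^1}$ with $\gamma_\eta=\min(\mu,\eta)/C_K$.

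I expect this step to be the main obstacle, conceptually rather than technically. The naive form of Korn's inequality, with $\|\nabla^s\mbs{v}\|$ alone, fails on $[H^1]^d$ precisely because of $\mathcal{R}$. It is the $L^2$ term added by the regularization that restores the full $H^1$ control. One must also note that $\gamma_\eta$ degenerates like $\eta$ as $\eta\downarrow0$. Consequently the continuity constant of the solution map is not uniform in $\eta$, and uniform bounds must be handled separately in the convergence analysis. If one preferred to avoid citing Korn's second inequality, an alternative is to decompose $\mbs{v}=\mbs{v}^\perp+\mbs{\rho}$, with $\mbs{v}^\perp$ $L^2$-orthogonal to $\mathcal{R}$. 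One then applies the Korn inequality on $\mathcal{R}^\perp$ (which follows from the well-posedness theorem of \cite{ciarlet1978tm} on $\mathcal{S}$) to $\mbs{v}^\perp$, and controls $\mbs{\rho}$ by the $\eta$-term using the equivalence of norms on the finite-dimensional space $\mathcal{R}$.
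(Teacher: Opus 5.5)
Your proof is correct, but it follows a different route from the paper. The paper does not apply Lax--Milgram with an $\eta$-dependent coercivity constant. Instead it exploits the compatibility of the loads: testing \eqref{eq-weak-regularized} with $\mbs{\rho}\in\mathcal{R}$ gives $\eta(\mbs{u}_\eta,\mbs{\rho})=\ell(\mbs{\rho})-a(\mbs{u}_\eta,\mbs{\rho})=0$, see \eqref{eq-ueta-ortho}, so $\mbs{u}_\eta$ is centered. The Korn--Poincar\'e inequality of Lemma~\ref{lem-KPW} on centered fields then gives $\|\mbs{u}_\eta\|^2_{[H^1(\Omega)]^d}\lesssim\etanorm{\mbs{u}_\eta}^2=\ell(\mbs{u}_\eta)$, i.e.\ the bound \eqref{eq-ueta-bound} with a constant independent of $\eta$.

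Your approach buys three things. It genuinely establishes existence and uniqueness, whereas the paper's argument is really an a priori estimate for a solution it presupposes. It needs only Korn's second inequality rather than the compactness argument behind Lemma~\ref{lem-KPW}. And it covers arbitrary, even non-equilibrated, loads. It is in fact the argument the paper itself uses later for the incompatible part of the load in Theorem~\ref{theo-neq}, which is why \eqref{eq-neq-b2} carries the factor $\max(1,\eta^{-1})$.

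The paper's approach buys $\eta$-uniform stability, which is indispensable downstream. Theorem~\ref{thm-main} closes by bounding $\eta\|\mbs{u}_\eta\|_{[L^2(\Omega)]^d}$ in \eqref{eq-mt-p1} via \eqref{eq-ueta-bound}; with your constant $C_K/\min(\mu,\eta)$ that term would only be $O(1)$, destroying the $O(\eta)$ rate.

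The two arguments combine naturally. Use your Lax--Milgram step for existence and uniqueness. Then, for compatible loads, test with $\mathcal{R}$ to obtain centering and invoke Lemma~\ref{lem-KPW} to remove the $\eta$-dependence. Your suggested decomposition $\mbs{v}=\mbs{v}^\perp+\mbs{\rho}$ is essentially this lemma, which the paper proves directly rather than extracting it from the well-posedness on $\mathcal{S}$.
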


\begin{remarks}
\ \newline
\begin{enumerate}

\item The idea of studying the solution to the pure traction problem by regularization is not new. This technique appears, for example, in the classical work of Fichera \cite{fichera1972in}.

\item The key aspect of Theorem~\ref{theo-reg} is that by shifting the problem from the pure Neumann case to the regularized formulation, the functional space where solutions are sought is now the  Hilbert space $[H^1(\Omega)]^d$, not a quotient space.  This seemingly minor change has a dramatic impact from the point of view of approximation: whereas there is no simple way to approximate classes in $\mathcal{S}$, the approximation of the regularized problem is trivial.

\item Let us note that Theorem~\ref{theo-reg} says nothing about the dependency of the solution on~$\eta$. In fact,
for general loads, the bound in the solution is not uniformly in $\eta$. We will refine this result
later. In terms of the motivation example of Section~\ref{sec-mot}, this theorem simply generalizes the fact
that the stiffness in Eq.~\eqref{eq-matrix-eta} is regular without claiming anything about its
condition number.

\item Let us note that, unless all quantities appearing in the problem have been normalized, the
regularizing parameter $\eta$ must have dimensions of $F/L^{d+1}$. In numerical examples, it might be
convenient to write
\begin{equation}
\label{eq-char-length}
\eta = \bar{\eta} \frac{\mu}{L^{d-1}}\ ,
\end{equation}
where $L$ is a characteristic length of the body and $\bar{\eta}$ is nondimensional. In Section~\ref{sec-analysis}
we will assume that all the equations are nondimensional. This simplifies the derivation of bounds and estimates.
\end{enumerate}

\end{remarks}

We claimed that the regularized problem~\eqref{eq-weak-regularized} is well-posed, but its relationship to the
original Neumann problem has not been elucidated yet. As we will show in Section~\ref{sec-analysis},
the solutions to the two problems are closely related, and when $\eta\to0$ the regularized solution
converges to a solution of the original problem. In effect, we have proposed a modified problem that
is simple to approximate numerically, and whose solution is as close as desired to the
solution to the pure traction problem.

\section{Analysis}
\label{sec-analysis}
We show in this section that the regularized problem is well-posed and that its solution is
related to the solution to the pure traction problem. Specifically, the limit solution when the regularizing
parameter goes to zero coincides with a particular solution of the Neumann problem. This important
result shows that solving the regularized problem with a small perturbation might be used to solve
the original problem, as closely as needed. For the remainder of this article, we assume that $d=3$. This
choice simplifies the algebra so that we can use the standard vector product in $\mathbb{R}^3$. All the
results can be extended to the two-dimensional case be redefining appropriately the operation of skew
tensors on vectors.

To start, let $\mbs{v}\in [L^2(\Omega)]^d$ be an arbitrary vector field and define
\begin{equation}
  \label{eq-means}
  \langle \mbs{v} \rangle \defined
  |\Omega|^{-1} \int_{\Omega} \mbs{v}(\mbs{x})
  \;dV\ ,
  \qquad
  \left\{ \mbs{v} \right\} \defined
  \mbs{J}^{-1} \int_{\Omega} (\mbs{x}-\mbs{x}_{\Omega})\times \mbs{v}(\mbs{x})\;dV\ ,
\end{equation}
with
\begin{equation}
  \label{eq-j}
  \mbs{x}_{\Omega}  \defined
  |\Omega|^{-1} \int_{\Omega} \mbs{x} \;dV\ ,
  \qquad
  \mbs{J} \defined
  \int_{\Omega} (\mbs{x}-\mbs{x}_{\Omega})\otimes (\mbs{x}-\mbs{x}_{\Omega})
  \;dV\ .
\end{equation}
For convenience, we introduce the following concept:

\begin{definition}
  A vector field $\mbs{v}\in[L^2(\Omega)]^d$ belongs to $\mathcal{C}$, the space of \emph{centered} displacements
  if
  \begin{equation}
    \label{eq-centered}
    \langle \mbs{v} \rangle = \{ \mbs{v} \} = \mbs{0}\ .
  \end{equation}
\end{definition}
This concept is closely related to the definition of the space of infinitesimal rigid body
motions as the following result shows:

\begin{lemma}
  \label{lem-brackets}
  A vector field $\mbs{u}\in [L^2(\Omega)]^d$ is orthogonal to the space of
  infinitesimal rigid body motions if and only if it is centered. Thus, the space $[L^2(\Omega)]^d$ is the orthogonal direct sum
  \begin{equation}
    \label{eq-direct-sum}
    [L^2(\Omega)]^d = \mathcal{C} \oplus \mathcal{R}\ ,
  \end{equation}
  and every $\mbs{u}\in [L^2(\Omega)]^d$ can be uniquely expressed as
  \begin{equation}
    \label{eq-direct-sum2}
    \mbs{u}(\mbs{x}) = \mbs{u}_0(\mbs{x}) + \mbs{u}_{\perp}(\mbs{x})\ ,
  \end{equation}
  with $\mbs{u}_0\in \mathcal{C}$, $\mbs{u}_{\perp}\in \mathcal{R}$, and
  $(\mbs{u}_0,\mbs{u}_\perp)=0$.
\end{lemma}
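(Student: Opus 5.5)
The plan is to test orthogonality against a convenient parametrization of $\mathcal{R}$ and then use finite-dimensionality of $\mathcal{R}$ for the decomposition. In $d=3$ every skew tensor acts as $\mbs{\beta}\mbs{y}=\mbs{\omega}\times\mbs{y}$ for a unique axial vector $\mbs{\omega}\in\mathbb{R}^3$. Since $\mbs{x}_\Omega$ is a constant, every rigid body motion can be written uniquely as
\begin{equation*}
\mbs{\rho}(\mbs{x})=\mbs{a}+\mbs{\omega}\times(\mbs{x}-\mbs{x}_\Omega),
\qquad \mbs{a},\mbs{\omega}\in\mathbb{R}^3 .
\end{equation*}
For $\mbs{u}\in[L^2(\Omega)]^3$ I then compute
\begin{equation*}
(\mbs{u},\mbs{\rho})=\mbs{a}\cdot\int_\Omega\mbs{u}\,dV+\int_\Omega \mbs{u}\cdot\big(\mbs{\omega}\times(\mbs{x}-\mbs{x}_\Omega)\big)dV
=|\Omega|\,\mbs{a}\cdot\langle\mbs{u}\rangle+\mbs{\omega}\cdot \mbs{J}\{\mbs{u}\},
\end{equation*}
using the cyclic property of the triple product. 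These integrals are finite because $\Omega$ is bounded.

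The characterization follows from this identity. If $\mbs{u}$ is centered, both terms vanish for every $\mbs{a},\mbs{\omega}$, so $\mbs{u}\perp\mathcal{R}$. Conversely, if $\mbs{u}\perp\mathcal{R}$, choosing $\mbs{\omega}=\mbs{0}$ with arbitrary $\mbs{a}$ gives $\langle\mbs{u}\rangle=\mbs{0}$. Choosing $\mbs{a}=\mbs{0}$ with arbitrary $\mbs{\omega}$ gives $\mbs{J}\{\mbs{u}\}=\int_\Omega(\mbs{x}-\mbs{x}_\Omega)\times\mbs{u}\,dV=\mbs{0}$, hence $\{\mbs{u}\}=\mbs{0}$.

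\textbf{Main obstacle.} The point needing care is that $\{\cdot\}$ uses $\mbs{J}^{-1}$, so I must show $\mbs{J}$ is invertible. For any $\mbs{n}\neq\mbs{0}$ we have $\mbs{n}\cdot\mbs{J}\mbs{n}=\int_\Omega(\mbs{n}\cdot(\mbs{x}-\mbs{x}_\Omega))^2dV$. This is strictly positive because $\Omega$ is a nonempty open set and cannot lie in a plane. Hence $\mbs{J}$ is symmetric positive definite. (The definition of $\{\cdot\}$ with the second-moment tensor $\mbs{J}$ rather than the inertia tensor is irrelevant here, since either is invertible and only the vanishing of the integral matters.)

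For the direct sum, $\mathcal{R}$ is a finite-dimensional subspace of $[L^2(\Omega)]^3$ (dimension $6$), hence closed. The projection theorem then gives $[L^2(\Omega)]^3=\mathcal{R}^\perp\oplus\mathcal{R}$, and by the first part $\mathcal{R}^\perp=\mathcal{C}$. Uniqueness of $\mbs{u}=\mbs{u}_0+\mbs{u}_\perp$ follows from $\mathcal{C}\cap\mathcal{R}=\{\mbs{0}\}$, since such an element is orthogonal to itself. Alternatively, $\mbs{u}_\perp$ can be built explicitly by solving $\langle\mbs{u}-\mbs{\rho}\rangle=\{\mbs{u}-\mbs{\rho}\}=\mbs{0}$ for $(\mbs{a},\mbs{\omega})$. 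This gives $\mbs{a}=\langle\mbs{u}\rangle$, because the rotational part has zero mean about the centroid. The equation for $\mbs{\omega}$ is a $3\times3$ linear system whose matrix is $\int_\Omega(|\mbs{y}|^2\mbs{I}-\mbs{y}\otimes\mbs{y})\,dV$ with $\mbs{y}=\mbs{x}-\mbs{x}_\Omega$, which is positive definite by the same non-degeneracy argument.
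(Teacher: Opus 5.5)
Your proof is correct and takes the same route as the paper's: you pair $\mbs{u}$ with translations and infinitesimal rotations to read off $\langle\mbs{u}\rangle$ and $\{\mbs{u}\}$. Recentring $\mbs{\rho}$ at $\mbs{x}_\Omega$ gives you the exact identity $(\mbs{u},\mbs{\rho})=|\Omega|\,\mbs{a}\cdot\langle\mbs{u}\rangle+\mbs{\omega}\cdot\mbs{J}\{\mbs{u}\}$, which settles both implications cleanly where the paper only sketches them. Your checks that $\mbs{J}$ is invertible and that the direct sum follows from closedness of $\mathcal{R}$ and the projection theorem fill in steps the paper leaves implicit. Your explicit construction also correctly shows that $\mbs{u}_\perp$ is obtained through the inertia tensor $\int_\Omega(|\mbs{y}|^2\mbs{I}-\mbs{y}\otimes\mbs{y})\,dV$, not by subtracting $(\mbs{x}-\mbs{x}_\Omega)\times\{\mbs{u}\}$ as in Lemma~\ref{lem-u0}.
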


\begin{proof}
A vector field $\mbs{u}\in [L^2(\Omega)]^d$ is orthogonal to an arbitrary infinitesimal rigid body
motion if
\begin{equation}
  0 = (\mbs{u},\mbs{\alpha} + \mbs{\beta}\;\mbs{x})
\end{equation}
for arbitrary $\mbs{\alpha} + \mbs{\beta}\;\mbs{x}\in \mathcal{R}$. Selecting $\mbs{\beta}\equiv\mbs{0}$,
we get
\begin{equation}
  0 = \int_{\Omega} \mbs{u}(\mbs{x})\cdot \mbs{\alpha}\; dV = \mbs{\alpha}\cdot \int_{\Omega} \mbs{u}(\mbs{x})\; dV
    = \mbs{\alpha} \cdot |\Omega|\,\langle \mbs{u} \rangle\ .
\end{equation}
Since $\mbs{\alpha}$ is arbitrary and $|\Omega|>0$, $\langle \mbs{u} \rangle$ must identically vanish. A similar
argument shows that $\{ \mbs{u} \}$ must also vanish. The reverse implication is obtained by noting
that $0=\mbs{\alpha}\cdot \langle \mbs{u} \rangle + \mbs{\beta}\cdot \mbs{x}\times \{ \mbs{u} \}$
for any $\mbs{\alpha},\mbs{\beta}$.
\end{proof}

\begin{lemma}
  \label{lem-u0}
  Let $\mbs{u}\in [H^1(\Omega)]^d$ be any solution to the Neumann
  problem~\eqref{eq-weak-neumann}. Then, the function $\mbs{u}_{0}\in [H^1(\Omega)]^d$ defined as
  \begin{equation}
  \label{eq-u-split}
  \mbs{u}_0(\mbs{x}) \defined \mbs{u}(\mbs{x}) - \langle \mbs{u} \rangle -
  (\mbs{x}-\mbs{x}_{\Omega})\times \left\{ \mbs{u} \right\}
\end{equation}
belongs to $[\mbs{u}]$ and thus is also a solution to the Neumann problem. Moreover, it is centered and has the minimum $L^2$ norm among all functions in $[\mbs{u}]$.
\end{lemma}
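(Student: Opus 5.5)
The plan is to read the map $\mbs{u}\mapsto \langle \mbs{u}\rangle + (\mbs{x}-\mbs{x}_{\Omega})\times\{\mbs{u}\}$ as the $L^2$-orthogonal projection of $\mbs{u}$ onto the finite-dimensional subspace $\mathcal{R}$. With that identification, all three claims follow from Lemma~\ref{lem-brackets}. I will carry out the steps in the following order.

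\emph{Step 1: $\mbs{u}_0\in[\mbs{u}]$ and $\mbs{u}_0$ solves the Neumann problem.} Set $\mbs{\rho}(\mbs{x})\defined \langle\mbs{u}\rangle + (\mbs{x}-\mbs{x}_\Omega)\times\{\mbs{u}\}$. This is of the form \eqref{eq-irbm}, with $\mbs{\alpha}=\langle\mbs{u}\rangle - \mbs{x}_\Omega\times\{\mbs{u}\}$ and $\mbs{\beta}$ the skew tensor associated with the axial vector $\{\mbs{u}\}$. Hence $\mbs{\rho}\in\mathcal{R}$ and $\mbs{u}_0=\mbs{u}-\mbs{\rho}\sim\mbs{u}$. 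Moreover, $\mbs{u}_0\in[H^1(\Omega)]^d$ because $\mbs{\rho}$ is smooth on the bounded set $\Omega$. By \eqref{eq-a-orthogonal}, $a(\mbs{u}_0,\mbs{v})=a(\mbs{u},\mbs{v})=\ell(\mbs{v})$ for every $\mbs{v}$, so $\mbs{u}_0$ also solves \eqref{eq-weak-neumann}.

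\emph{Step 2: $\mbs{u}_0$ is centered.} Write $\mbs{y}=\mbs{x}-\mbs{x}_\Omega$ and note that $\int_\Omega\mbs{y}\,dV=\mbs{0}$ by the definition of $\mbs{x}_\Omega$. Then $\langle\mbs{u}_0\rangle=\langle\mbs{u}\rangle-\langle\mbs{u}\rangle-\langle\mbs{y}\rangle\times\{\mbs{u}\}=\mbs{0}$. For the rotational part, the cross term $\int\mbs{y}\times\langle\mbs{u}\rangle\,dV$ vanishes for the same reason. The remaining term is $\int_\Omega \mbs{y}\times(\mbs{y}\times\mbs{w})\,dV$ with $\mbs{w}=\{\mbs{u}\}$. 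Using the triple product identity, $-\mbs{y}\times(\mbs{y}\times\mbs{w}) = (|\mbs{y}|^2\mbs{1}-\mbs{y}\otimes\mbs{y})\mbs{w}$. Therefore $\{\mbs{u}_0\}=\mbs{0}$ holds exactly when the tensor inverted in \eqref{eq-means} is the inertia tensor $\int_\Omega(|\mbs{y}|^2\mbs{1}-\mbs{y}\otimes\mbs{y})\,dV=(\trace\mbs{J})\mbs{1}-\mbs{J}$.

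This is the step I expect to be the main obstacle. As literally written, \eqref{eq-means} inverts $\mbs{J}$ itself, so the identity only closes if $\mbs{J}$ is understood as the inertia tensor. I would either adopt that reading explicitly, noting that $(\trace\mbs{J})\mbs{1}-\mbs{J}$ is positive definite and hence invertible for any $\Omega$ with nonempty interior, or bypass the explicit formula entirely. The bypass goes as follows. Let $P$ be the $L^2$-orthogonal projection onto $\mathcal{R}$, which is well defined because $\mathcal{R}$ is finite-dimensional and therefore closed. Then $\mbs{u}-P\mbs{u}\perp\mathcal{R}$, so $\mbs{u}-P\mbs{u}$ is centered by Lemma~\ref{lem-brackets}. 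Finally, check that the two conditions $\langle\cdot\rangle=\{\cdot\}=\mbs{0}$ determine the six parameters of $P\mbs{u}$ uniquely, and that they give precisely the formula \eqref{eq-u-split}.

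\emph{Step 3: minimality of the norm.} Any element of $[\mbs{u}]$ can be written as $\mbs{u}_0+\mbs{\sigma}$ with $\mbs{\sigma}\in\mathcal{R}$. Since $\mbs{u}_0\in\mathcal{C}$ is orthogonal to $\mathcal{R}$ (Lemma~\ref{lem-brackets}), Pythagoras gives $\|\mbs{u}_0+\mbs{\sigma}\|^2=\|\mbs{u}_0\|^2+\|\mbs{\sigma}\|^2\ge\|\mbs{u}_0\|^2$, with equality only when $\mbs{\sigma}=\mbs{0}$. Thus $\mbs{u}_0$ is the unique element of minimal $L^2$ norm in $[\mbs{u}]$, which completes the proof.
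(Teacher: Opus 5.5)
Your Steps 1 and 3 follow the paper's proof. The paper also observes that $\mbs{u}-\mbs{u}_0$ is an infinitesimal rigid body motion and obtains minimality from a Pythagorean splitting of $\|\mbs{u}\|^2$. Your Step 3, which applies Lemma~\ref{lem-brackets} to an arbitrary member $\mbs{u}_0+\mbs{\sigma}$ of the class, is cleaner than the paper's explicit identity. The paper simply asserts that $\mbs{u}_0$ is centered. You are right that this is where the content lies, and that it fails for $\mbs{J}=\int_\Omega\mbs{y}\otimes\mbs{y}\,dV$ (with $\mbs{y}=\mbs{x}-\mbs{x}_\Omega$).

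The gap is that your repair is off by a sign. Put $\mbs{w}=\{\mbs{u}\}$ and $\mbs{\Theta}\defined(\trace\mbs{J})\mbs{1}-\mbs{J}$. The minus sign in front of $\mbs{y}\times\mbs{w}$ in \eqref{eq-u-split}, combined with your identity $-\mbs{y}\times(\mbs{y}\times\mbs{w})=(|\mbs{y}|^2\mbs{1}-\mbs{y}\otimes\mbs{y})\mbs{w}$, gives
\begin{equation*}
  \int_\Omega \mbs{y}\times\mbs{u}_0\,dV=\int_\Omega\mbs{y}\times\mbs{u}\,dV+\mbs{\Theta}\,\mbs{w}\ .
\end{equation*}
So centering forces $\mbs{w}=-\mbs{\Theta}^{-1}\int_\Omega\mbs{y}\times\mbs{u}\,dV$, not $+\mbs{\Theta}^{-1}\int_\Omega\mbs{y}\times\mbs{u}\,dV$.

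Concretely, take $\mbs{u}=\mbs{\omega}\times\mbs{y}$ with $\mbs{\omega}\neq\mbs{0}$, which is a solution for $\ell=0$. Then $\int_\Omega\mbs{y}\times\mbs{u}\,dV=\mbs{\Theta}\mbs{\omega}$. Your reading yields $\mbs{w}=\mbs{\omega}$ and $\mbs{u}_0=\mbs{\omega}\times\mbs{y}-\mbs{y}\times\mbs{\omega}=2\,\mbs{\omega}\times\mbs{y}$. This is a nonzero rigid motion, so it is neither centered nor the minimal-norm element $\mbs{0}$ of $[\mbs{u}]=\mathcal{R}$. With the literal $\mbs{J}$, the same example gives $\mbs{u}_0=(\mbs{\omega}+\mbs{J}^{-1}\mbs{\Theta}\mbs{\omega})\times\mbs{y}\neq\mbs{0}$, since $\mbs{J}^{-1}\mbs{\Theta}$ has positive eigenvalues. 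Hence the statement as printed is false. The same orientation slip appears, harmlessly, in your Step 1: the skew tensor $\mbs{\beta}$ there has axial vector $-\{\mbs{u}\}$.

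Your projection bypass is the right repair, but its final check (``they give precisely the formula \eqref{eq-u-split}'') cannot succeed under either reading of $\mbs{J}$. What it actually proves is the lemma for $\mbs{u}_0\defined\mbs{u}-P\mbs{u}$, where $P\mbs{u}=\langle\mbs{u}\rangle+\mbs{\omega}_{\mbs{u}}\times\mbs{y}$ and $\mbs{\omega}_{\mbs{u}}\defined\mbs{\Theta}^{-1}\int_\Omega\mbs{y}\times\mbs{u}\,dV$. Equivalently, this is \eqref{eq-u-split} with $\{\mbs{u}\}$ redefined as $-\mbs{\Theta}^{-1}\int_\Omega\mbs{y}\times\mbs{u}\,dV$.

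The definition of $\mathcal{C}$ and Lemma~\ref{lem-brackets} are unaffected. This is because $\{\mbs{v}\}=\mbs{0}$ holds if and only if $\int_\Omega\mbs{y}\times\mbs{v}\,dV=\mbs{0}$, whatever invertible prefactor is used. State this corrected formula explicitly and verify the centering with the display above; then your Steps 1 and 3 go through verbatim. For comparison, the correctly computed version of the paper's splitting is $\|\mbs{u}\|^2=\|\mbs{u}_0\|^2+|\Omega|\,|\langle\mbs{u}\rangle|^2+\mbs{\omega}_{\mbs{u}}\cdot\mbs{\Theta}\,\mbs{\omega}_{\mbs{u}}$. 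As printed, it has $|\Omega|^2$ and $\mbs{J}$ in place of $|\Omega|$ and $\mbs{\Theta}$.
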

\begin{proof} The function $\mbs{u}_0$ satisfies Eq.~\eqref{eq-centered} and the difference
  $\mbs{u}-\mbs{u}_0= \langle \mbs{u} \rangle + (\mbs{x}-\mbs{x}_{\Omega})\times \left\{
    \mbs{u} \right\}$  is an infinitesimal rigid body motion.
  Hence $\mbs{u}\sim \mbs{u}_0$, and $\mbs{u}_0$ is a
  solution to the Neumann problem. Also, a simple manipulation gives
  \begin{equation}
    \| \mbs{u} \|_{[L^2(\Omega)]^d}^2 =
    \| \mbs{u}_0 \|_{[L^2(\Omega)]^d}^2 +
     |\langle \mbs{u} \rangle|^{2}\, |\Omega|^2 +
    \left\{ \mbs{u} \right\}\cdot \mbs{J} \left\{ \mbs{u}^{*} \right\}
    \ge
  \| \mbs{u}_0 \|_{[L^2(\Omega)]^d}^2\ ,
\end{equation}
since $|\Omega|>0$ and the inertia $\mbs{J}$ is positive definite.
\end{proof}

The key result for proving the well-posedness of the regularized problem is provided by
a Korn-type inequality \cite{duvaut1976ud}:

\begin{lemma}
  \label{lem-KPW}
  Let $\mbs{v}\in [H^1(\Omega)]^d$ be centered. Then,
\begin{equation}
\label{eq-KPW}
\| \nabla^s\mbs{v} \|_{[L^2(\Omega)]^d}
\ge C\;
\| \mbs{v}  \|_{[H^1(\Omega)]^d}
\ ,
\end{equation}
for some constant $C$.
\end{lemma}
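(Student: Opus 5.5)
The plan is to argue by contradiction with a compactness argument. Korn's second inequality on Lipschitz domains is taken as known (this is the content of the result cited from \cite{duvaut1976ud}):
\begin{equation*}
\| \mbs{v} \|_{[H^1(\Omega)]^d} \le C\left( \| \nabla^s\mbs{v} \|_{[L^2(\Omega)]^{d\times d}} + \| \mbs{v} \|_{[L^2(\Omega)]^d}\right)
\end{equation*}
for all $\mbs{v}\in[H^1(\Omega)]^d$. The second ingredient is the Rellich--Kondrachov theorem, which says the embedding $[H^1(\Omega)]^d\hookrightarrow[L^2(\Omega)]^d$ is compact for bounded Lipschitz $\Omega$.

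Suppose the inequality \eqref{eq-KPW} fails. Then there is a sequence of centered fields $\mbs{v}_n\in[H^1(\Omega)]^d$ with $\|\mbs{v}_n\|_{[H^1(\Omega)]^d}=1$ and $\|\nabla^s\mbs{v}_n\|_{L^2}\le 1/n$. By boundedness in $H^1$ and compactness, a subsequence converges weakly in $H^1$ and strongly in $L^2$ to some $\mbs{v}$. Korn's second inequality applied to differences gives
\begin{equation*}
\|\mbs{v}_n-\mbs{v}_m\|_{H^1}\le C(\|\nabla^s\mbs{v}_n\|+\|\nabla^s\mbs{v}_m\|+\|\mbs{v}_n-\mbs{v}_m\|_{L^2})\to0 ,
\end{equation*}
so the sequence is Cauchy in $H^1$ and converges strongly. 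Hence $\|\mbs{v}\|_{H^1}=1$ and $\nabla^s\mbs{v}=\mbs{0}$ in $L^2$.

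Next, use the classical fact that a field in $[H^1(\Omega)]^d$ with vanishing symmetric gradient on a connected domain is an infinitesimal rigid body motion, i.e.\ $\mbs{v}\in\mathcal{R}$. It follows from the identity $\partial_j\partial_k v_i=\partial_j\epsilon_{ik}+\partial_k\epsilon_{ij}-\partial_i\epsilon_{jk}$ in the distributional sense, which gives $\nabla\nabla\mbs{v}=0$, so $\mbs{v}$ is affine with skew gradient. Connectedness of $\Omega$ is needed here and should be assumed as part of the domain hypothesis. Moreover, $\langle\cdot\rangle$ and $\{\cdot\}$ are continuous linear functionals on $L^2$, so $\mbs{v}$ is centered as a strong $L^2$ limit of centered fields. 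By Lemma~\ref{lem-brackets}, $\mbs{v}$ is then orthogonal to $\mathcal{R}$ while lying in $\mathcal{R}$, so $\mbs{v}=\mbs{0}$. This contradicts $\|\mbs{v}\|_{H^1}=1$.

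The main obstacle is Korn's second inequality itself. Its proof on Lipschitz domains relies on the Lions lemma: a distribution in $H^{-1}$ with gradient in $H^{-1}$ lies in $L^2$. Applied with the identity above to $\nabla\mbs{v}$, this yields $\|\nabla\mbs{v}\|\lesssim\|\mbs{v}\|_{L^2}+\|\nabla^s\mbs{v}\|$. I would cite this from \cite{duvaut1976ud} rather than reprove it. The rest is the routine compactness argument above.
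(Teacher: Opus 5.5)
Your proof is correct and follows essentially the same route as the paper: argue by contradiction, use Korn's second inequality for $H^1$-boundedness, extract a limit via Rellich compactness, and conclude because $\mathcal{R}=\ker\nabla^s$ is orthogonal to centered fields. The differences are cosmetic. The paper first reduces to $\|\nabla^s\mbs{v}\|\ge C\|\mbs{v}\|_{[L^2(\Omega)]^d}$, normalizes in $L^2$ and uses weak lower semicontinuity, whereas you normalize in $H^1$ and get strong convergence from Korn applied to differences; your explicit justification of the kernel, including the connectedness of $\Omega$ that the paper tacitly assumes, is a welcome addition.
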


\begin{proof}
Let us introduce the map $\triple{\mbs{v}} \defined \|\nabla^s \mbs{v}\|_{[L^2(\Omega)]^d}$.
Korn's second inequality shows that the map $\mbs{v}\mapsto (\triple{\mbs{v}}^2 + \| \mbs{v}
\|^2_{[L^2(\Omega)]^d})^{1/2}$ is a norm on
the space $[H^1(\Omega)]^d$, and equivalent to the standard $H^1-$norm. Hence, to prove
bound~\eqref{eq-KPW}, it suffices to show that
\begin{equation}
  \label{eq-kk-p1}
\triple{\mbs{v}} \ge C\; \| \mbs{v} \|_{[L^2(\Omega)]^d}\ ,
\end{equation}
for all $\mbs{v}\in[H^1(\Omega)]^d$ that are centered. For that, we note first that
centered functions are orthogonal to the space $\mathcal{R}$, which is precisely
the kernel of the operator $\nabla^s$, hence the only centered function that
verifies $\triple{\mbs{v}}=0$ is $\mbs{v}\equiv\mbs{0}$. Then, if \eqref{eq-kk-p1} were not true, there
would be a sequence of centered functions $\{\mbs{v}_n\}_{n=1}^{\infty}$ with
\begin{equation}
  \triple{\mbs{v}_n}\to 0 \ , \qquad
  \|\mbs{v}_n\|_{[L^2(\Omega)]^d}=1\ .
\end{equation}
Again, by Korn's second inequality the sequence $\{\mbs{v}\}_{n=1}^{\infty}\in [H^1(\Omega)]^d$ is
bounded and thus there is a subsequence, still denoted $\{\mbs{v}_n\}_{n=1}^{\infty}$, and a centered
function $\mbs{v}$ such that $\mbs{v}_n \rightharpoonup \mbs{v}$ in $\in [H^1(\Omega)]^d$.
Using the properties of weak limits,
\begin{equation}
  0 = \lim_{n\to\infty} \triple{\mbs{v}_{n}}
  \ge \liminf_{n\to\infty} \triple{\mbs{v}_n} \ge \triple{\mbs{v}} \ ,
\end{equation}
thus, $\triple{\mbs{v}}=0$ and, by the first result of this proof, $\mbs{v}\equiv \mbs{0}$. By Rellich's embedding theorem (see \cite[Prop. 4.4]{taylor2011}) and the fact that finite Cartesian products preserve compact embeddings, $[H^1(\Omega)]^d$ is compactly embedded in $[L^2(\Omega)]^d$ 
and thus $\mbs{v}_n\to\mbs{0}$ in $[L^2(\Omega)]^d$, contradicting the assumption that
$\| \mbs{v}_n\|_{[L^2(\Omega)]^d} = 1$, hence proving the first result of the lemma. With it, the
proof the second bound is immediate. See \cite{acosta2017vu} for further details.
\end{proof}

Lemma~\ref{lem-KPW} is the central result required to prove the stability of the regularized Neumann
formulation. Using it, we can prove the following result that will simplify the analysis

\begin{lemma}
  \label{lem-norms}
  The function $\triple{\cdot}$ is a norm on $[H^1(\Omega)]^d\cap \mathcal{C}$, equivalent to the
  standard norm on $[H^1(\Omega)]^d$. Moreover, the map
  \begin{equation}
    \label{eq-eta-norm}
    \etanorm{\mbs{v}} \defined
    \left( a_{\eta}(\mbs{v},\mbs{v}) \right)^{1/2}
  \end{equation}
  defined over $[H^1(\Omega)]^d$ satisfies
  \begin{equation}
    \label{eq-eta-bound2}
    \etanorm{\mbs{v}} \lesssim \max(1,\sqrt{\eta})\, \| \mbs{v} \|_{[H^1(\Omega)]^d}\ .
  \end{equation}
  Also, if $\mbs{v}\in [H^1(\Omega)]^d\cap \mathcal{C}$, then
  \begin{equation}
    \label{eq-eta-bound1}
    \| \mbs{v} \|_{[H^1(\Omega)]^d} \lesssim \etanorm{\mbs{v}}\ .
  \end{equation}
  Hence, the weighted norm $\etanorm{\cdot}$ is equivalent to the $[H^1(\Omega)]^d-$norm, but not
  uniformly on~$\eta$.
\end{lemma}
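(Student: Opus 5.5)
The plan splits into three parts, one for each claim of Lemma~\ref{lem-norms}. Throughout we use that $\lambda,\mu>0$ are fixed, nondimensional constants.

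\emph{Step 1: $\triple{\cdot}$ is a norm on $[H^1(\Omega)]^d\cap\mathcal{C}$.} The map is clearly a seminorm, since it is the $L^2$ norm of the linear map $\nabla^s$. It remains to show definiteness and equivalence with the $H^1$ norm. By Lemma~\ref{lem-KPW}, $\|\mbs{v}\|_{[H^1(\Omega)]^d}\le C^{-1}\triple{\mbs{v}}$ for centered $\mbs{v}$, which also gives definiteness. The reverse bound $\triple{\mbs{v}}\le\|\nabla\mbs{v}\|_{[L^2(\Omega)]^d}\le\|\mbs{v}\|_{[H^1(\Omega)]^d}$ holds because the symmetric part of a tensor has Frobenius norm no larger than the tensor itself.

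\emph{Step 2: the upper bound \eqref{eq-eta-bound2}.} We expand
\[
\etanorm{\mbs{v}}^2 = 2\mu\,\triple{\mbs{v}}^2 + \lambda\|\nabla\cdot\mbs{v}\|^2_{L^2(\Omega)} + \eta\|\mbs{v}\|^2_{[L^2(\Omega)]^d},
\]
where we assume the paper's normalization of $\mu$ in $a$; only the constants change otherwise. Two pointwise estimates handle the first two terms. First, $|\nabla\cdot\mbs{v}|=|\mathrm{tr}\,\nabla^s\mbs{v}|\le\sqrt d\,|\nabla^s\mbs{v}|$. Second, $|\nabla^s\mbs{v}|\le|\nabla\mbs{v}|$. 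Together they bound the first two terms by $C\|\nabla\mbs{v}\|^2_{L^2}$. The last term is bounded by $\eta\|\mbs{v}\|^2_{H^1}$. Summing gives $\etanorm{\mbs{v}}^2\lesssim \max(1,\eta)\|\mbs{v}\|^2_{H^1}$, and taking square roots yields \eqref{eq-eta-bound2}. This bound holds on all of $[H^1(\Omega)]^d$, not only on centered fields.

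\emph{Step 3: the lower bound \eqref{eq-eta-bound1} for centered $\mbs{v}$.} Dropping the nonnegative $\lambda$- and $\eta$-terms gives $\etanorm{\mbs{v}}^2\ge 2\mu\triple{\mbs{v}}^2$. Lemma~\ref{lem-KPW} then gives $\triple{\mbs{v}}^2\ge C^2\|\mbs{v}\|^2_{H^1}$, so $\|\mbs{v}\|_{H^1}\lesssim\etanorm{\mbs{v}}$. The constant depends only on $\mu$ and the Korn constant, so it is uniform in $\eta$.

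The final remark, that the equivalence is not uniform in $\eta$, needs one more observation. Steps 2 and 3 give equivalence on $\mathcal{C}$, but off $\mathcal{C}$ a lower bound must come from the $\eta$-term. For $\mbs{v}=\mbs{\rho}\in\mathcal{R}$ we have $\etanorm{\mbs{\rho}}=\sqrt\eta\,\|\mbs{\rho}\|_{L^2}$, and by the finite dimensionality of $\mathcal{R}$ this is comparable to $\sqrt\eta\,\|\mbs{\rho}\|_{H^1}$. For general $\mbs{v}$ we use the splitting $\mbs{v}=\mbs{v}_0+\mbs{v}_\perp$ of Lemma~\ref{lem-brackets}. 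Since $a(\mbs{v},\mbs{\rho})=0$ by \eqref{eq-a-orthogonal} and the two parts are $L^2$-orthogonal, the norm splits as $\etanorm{\mbs{v}}^2=\etanorm{\mbs{v}_0}^2+\eta\|\mbs{v}_\perp\|^2_{L^2}$. This yields $\|\mbs{v}\|_{H^1}\lesssim \max(1,\eta^{-1/2})\etanorm{\mbs{v}}$, and the factor $\eta^{-1/2}$ is sharp on $\mathcal{R}$.

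The only delicate point I expect is that $\mbs{v}_0$ must lie in $H^1$. This holds because $\mbs{v}_\perp$ is smooth, so $\mbs{v}_0=\mbs{v}-\mbs{v}_\perp\in H^1$ whenever $\mbs{v}\in H^1$. The rest consists of elementary pointwise inequalities and direct applications of Lemma~\ref{lem-KPW}.
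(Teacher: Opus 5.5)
Your proof is correct and is exactly the argument the paper leaves implicit: the paper states Lemma~\ref{lem-norms} without a separate proof, relying on Lemma~\ref{lem-KPW} for definiteness and the lower bound~\eqref{eq-eta-bound1}, and on elementary pointwise estimates for~\eqref{eq-eta-bound2}, just as in your Steps 1--3. Your additional splitting identity $\etanorm{\mbs{v}}^2=\etanorm{\mbs{v}_0}^2+\eta\|\mbs{v}_\perp\|^2_{[L^2(\Omega)]^d}$ gives $\|\mbs{v}\|_{[H^1(\Omega)]^d}\lesssim\max(1,\eta^{-1/2})\etanorm{\mbs{v}}$ on all of $[H^1(\Omega)]^d$, sharp on $\mathcal{R}$; this is a correct justification of the closing ``not uniformly in~$\eta$'' clause, which the paper asserts without argument.
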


The solution of the regularized problem~\eqref{eq-weak-regularized} is centered because,
due to relations~\eqref{eq-equilibrium2} and \eqref{eq-a-orthogonal}, for all $\mbs{\rho}\in\mathcal{R}$ and $\eta>0$,
\begin{equation}
\label{eq-ueta-ortho}
(\mbs{u}_\eta, \mbs{\rho}) =
\frac{1}{\eta}  \left( \ell(\mbs{\rho}) -  a(\mbs{u}_\eta, \mbs{\rho}) \right) = 0\ .
\end{equation}
As a result, we can use bound~\eqref{eq-KPW} to prove Theorem~\ref{theo-reg}:
\begin{proof}[Proof of Theorem~\ref{theo-reg}]
  The solution $\mbs{u}_\eta$ of the variational equation~\eqref{eq-weak-regularized} is centered and thus
  \begin{equation}
  \| \mbs{u}_\eta \|^2_{[H^1(\Omega)]^d} \lesssim
  \triple{\mbs{u}_{\eta}}_{\eta}^2
  \lesssim
  \|\ell\|_{[H^{-1}(\Omega)]^d} \| \mbs{u}_\eta \|_{[H^1(\Omega)]^d}\ ,
  \end{equation}
  hence
  \begin{equation}
  \label{eq-ueta-bound}
  \| \mbs{u}_\eta \|_{[H^1(\Omega)]^d} \lesssim \|\ell\|_{[H^{-1}(\Omega)]^d}\ .
  \end{equation}
\end{proof}
As indicated in the statement of Theorem~\ref{theo-reg}, Eq.~\eqref{eq-ueta-bound}
shows that the regularized solution is uniformly stable in $\eta$. This pivots
on the fact that the loads are equilibrated.

Next, we prove the key result of the continuous problem under consideration. We will show that the solution of the
regularized variational problem is close to one member of the equivalent class of functions that
solve the Neumann problem. More precisely, the regularized solution $\mbs{u}_{\eta}$ is close to
$\mbs{u}_{0}\in [\mbs{u}]$, the solution function of minimal $L^2$ norm, and the distance between
the two solutions goes to zero with the regularization parameter.

\begin{theorem}\label{thm-main}
  Let $[\mbs{u}]\in \mathcal{S}$ be the solution to the Neumann problem~\eqref{eq-weak-neumann}
  and $\mbs{u}_{\eta}\in [H^1(\Omega)]^d$ be the solution
  to the regularized Neumann problem~\eqref{eq-weak-regularized}. Then, there exists a constant $C$
  independent of $\eta$ such that
  \begin{equation}
  \label{eq-th1}
  \| \mbs{u}_{\eta} - \mbs{u}_0 \|_{[H^1(\Omega)]^d}
  \lesssim
  \eta\; \| \ell \|_{[H^{-1}(\Omega)]^d}\ ,
\end{equation}
where $\mbs{u}_0$ is defined in Eq.~\eqref{eq-u-split}.
\end{theorem}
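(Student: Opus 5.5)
The plan is to compare the two variational problems on the centered subspace and use the Korn-type inequality of Lemma~\ref{lem-KPW} as the coercivity tool. By Lemma~\ref{lem-u0}, $\mbs{u}_0$ solves \eqref{eq-weak-neumann} and is centered. Since both $a(\cdot,\cdot)$ and $\ell$ vanish on $\mathcal{R}$, the Neumann equation $a(\mbs{u}_0,\mbs{v})=\ell(\mbs{v})$ holds for every $\mbs{v}\in[H^1(\Omega)]^d$, not only modulo rigid motions. The regularized solution satisfies $a(\mbs{u}_\eta,\mbs{v})+\eta(\mbs{u}_\eta,\mbs{v})=\ell(\mbs{v})$ and, by \eqref{eq-ueta-ortho}, is also centered. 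Subtracting the two equations, the error $\mbs{e}\defined\mbs{u}_\eta-\mbs{u}_0$ satisfies the error equation
\begin{equation*}
a(\mbs{e},\mbs{v}) + \eta\,(\mbs{e},\mbs{v}) = -\eta\,(\mbs{u}_0,\mbs{v})\qquad \forall\,\mbs{v}\in[H^1(\Omega)]^d ,
\end{equation*}
with $\mbs{e}\in\mathcal{C}$.

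Next, test the error equation with $\mbs{v}=\mbs{e}$. The left-hand side equals $\etanorm{\mbs{e}}^2$, which dominates $a(\mbs{e},\mbs{e})\ge 2\mu\triple{\mbs{e}}^2$. Since $\mbs{e}$ is centered, Lemma~\ref{lem-KPW} (equivalently \eqref{eq-eta-bound1}, with a constant independent of $\eta$ because only the $a$-part is used) gives $\|\mbs{e}\|^2_{[H^1(\Omega)]^d}\lesssim a(\mbs{e},\mbs{e})$. The right-hand side is bounded by Cauchy--Schwarz by $\eta\|\mbs{u}_0\|_{[L^2(\Omega)]^d}\|\mbs{e}\|_{[L^2(\Omega)]^d}\le \eta\|\mbs{u}_0\|_{[H^1(\Omega)]^d}\|\mbs{e}\|_{[H^1(\Omega)]^d}$. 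Dividing through, this yields $\|\mbs{e}\|_{[H^1(\Omega)]^d}\lesssim \eta\,\|\mbs{u}_0\|_{[H^1(\Omega)]^d}$.

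It then remains to bound $\|\mbs{u}_0\|_{[H^1(\Omega)]^d}\lesssim\|\ell\|_{[H^{-1}(\Omega)]^d}$. For this, take $\mbs{v}=\mbs{u}_0$ in the Neumann problem and use Lemma~\ref{lem-KPW} again on the centered $\mbs{u}_0$:
\begin{equation*}
\|\mbs{u}_0\|^2_{[H^1(\Omega)]^d}\lesssim a(\mbs{u}_0,\mbs{u}_0)=\ell(\mbs{u}_0)\le \|\ell\|_{[H^{-1}(\Omega)]^d}\|\mbs{u}_0\|_{[H^1(\Omega)]^d}.
\end{equation*}
Alternatively, one can pass to the limit $\eta\to0$ in \eqref{eq-ueta-bound}. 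Combining the two estimates gives \eqref{eq-th1}.

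The only delicate point is ensuring that every constant is independent of $\eta$. This requires using only the $a$-part of $a_\eta$ for coercivity, via Lemma~\ref{lem-KPW}, rather than the $\eta$-dependent equivalence in Lemma~\ref{lem-norms}. It also relies on both $\mbs{u}_\eta$ and $\mbs{u}_0$ being centered, which in turn rests on the equilibrium of the loads. The term $\eta(\mbs{e},\mbs{e})\ge0$ is simply discarded.
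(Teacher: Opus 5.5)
Your proposal is correct and is essentially the paper's argument: subtract the two weak forms, test with the centered difference $\mbs{u}_\eta-\mbs{u}_0$, apply Lemma~\ref{lem-KPW} and Cauchy--Schwarz, and close with an $\eta$-uniform stability bound. The only cosmetic difference is in the bookkeeping. You write the perturbation as $-\eta(\mbs{u}_0,\mbs{e})$, discard $\eta\|\mbs{e}\|^2_{[L^2(\Omega)]^d}\ge 0$, and bound $\mbs{u}_0$ directly. The paper instead keeps $\eta(\mbs{u}_\eta,\mbs{u}_\eta-\mbs{u}_0)$ on the right and invokes \eqref{eq-ueta-bound} for $\mbs{u}_\eta$.
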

\begin{proof}
  Let $\mbs{u}_0$ be the displacement field defined in
  Lemma~\ref{lem-u0}. Subtracting Eq.~\eqref{eq-weak-neumann} from~\eqref{eq-weak-regularized}, and
  setting $\mbs{v} = \mbs{u}_{\eta}-\mbs{u}_0$ we get
  \begin{equation}
  a(\mbs{u}_{\eta} - \mbs{u}_0,\mbs{u}_{\eta}-\mbs{u}_0) +
  \eta (\mbs{u}_{\eta},\mbs{u}_{\eta}-\mbs{u}_0) = 0 \ .
\end{equation}
Since both $\mbs{u}_{\eta}$ and $\mbs{u}_0$ are centered, their difference is
too. Therefore, we can apply Lemma~\ref{lem-KPW} to obtain
\begin{equation}
  \| \mbs{u}_{\eta} - \mbs{u}_0 \|_{[H^1(\Omega)]^d}^2
  \lesssim
  \triple{\mbs{u}_{\eta}-\mbs{u}_0}^2
  \lesssim
  \eta\;\| \mbs{u}_{\eta} \|_{[L^2(\Omega)]^d} \;
  \| \mbs{u}_{\eta} - \mbs{u}_0 \|_{[L^2(\Omega)]^d}\ .
\end{equation}
We can simplify this equation and get
\begin{equation}
\label{eq-mt-p1}
\| \mbs{u}_{\eta} - \mbs{u}_0 \|_{[H^1(\Omega)]^d}
\lesssim
\eta\;
  \| \mbs{u}_{\eta} \|_{[L^2(\Omega)]^d}\ .
\end{equation}
Using bound~\eqref{eq-ueta-bound}, we conclude the proof.
\end{proof}

Theorem~\ref{thm-main} shows that, instead of solving the Neumann problem, whose solution is a
class of rigid-body equivalent functions in $H^1$, one can solve the perturbed problem --- whose solution is now a function in $H^1$. By replacing the original problem with the perturbed one, an error is made in the solution which is of the order of the perturbation parameter. Remarkably,
the regularized problem is well-posed, independently of $\eta$. Since the perturbed problem
is posed on a Hilbert space, it is straightforward to formulate a finite element approximation of the latter, as we will now see.

It bears emphasis that bound~\eqref{eq-th1} depends on the fact that
the external loading satisfies the compatibility conditions~\eqref{eq-equilibrium}, as we have shown
in the proof. Without this property, the solutions $\mbs{u}$ and $\mbs{u}_{\eta}$ need not be close
and the approach would fail.

\section{Finite element approximation}
\label{sec-fem}
Once we have shown that the regularized problem~\eqref{eq-weak-regularized} is well-posed in
$[H^1(\Omega)]^d$, we analyze its finite element approximation. Using
Céa's lemma \cite{ciarlet1978tm} it is straightforward to show that this Galerkin method
approximates the regularized solution $\mbs{u}_{\eta}$ optimally and inherits the stability of the
continuous problem. The result that closes the article shows that the finite element solution
converges to the solution of the Neumann problem with minimum $L^2$ norm when both the mesh size and the
regularizing parameter go to zero.

To show this, we start by introducing a finite element approximation of the pure traction problem. For that, let
$\mathcal{T}=\left\{ \Omega_e,\; e\in \mathcal{E} \right\}$ be a regular partition of the domain $\Omega$ into finite
elements $\Omega_e$ and define a finite element space of functions
\begin{equation}
  \label{eq-fem-space}
  \mathcal{V}^h =
  \left\{
    \mbs{u}^h\in H^1(\Omega),\
    \left. \mbs{u}^h\right|_{\Omega_e} \in R_k
  \right\}\ ,
\end{equation}
with $R_k$ being a space of polynomials or affine transformations of tensor-product polynomials of
order $k$ defined on a reference element $\hat{K}$. The finite element solution of the regularized problem can
be obtained as
\begin{equation}
\label{eq-fem}
\mbs{u}^h_{\eta} = \arg\min_{\mbs{w}^h\in \mathcal{V}^h} V_{\eta}(\mbs{w}^h)\ ,
\end{equation}
but also as the unique $\mbs{u}^h_{\eta}\in \mathcal{V}^h$ such that
\begin{align}
\label{eq-fem2}
a(\mbs{u}^h_{\eta},\mbs{w}^h) + \eta\; (\mbs{u}^h,\mbs{w}^h) = \ell(\mbs{w}^h)\ ,
\end{align}
for all $\mbs{w}^h\in \mathcal{V}^h$.
The space of finite element functions contains the infinitesimal rigid body
motions and, just as in Eq.~\eqref{eq-ueta-ortho}, we have that
\begin{equation}
 (\mbs{u}^h_\eta, \mbs{\rho}^h) =
 \frac{1}{\eta} \left( \ell(\mbs{\rho}^h) - a(\mbs{u}^h_\eta , \mbs{\rho}^h) \right) = 0\ .
\end{equation}
Then, we can replicate the proof of Theorem~\ref{theo-reg} to conclude that the finite
element regularized problem is well-posed, uniformly in $\eta$, i. e.,
\begin{equation}
\label{eq-fem-basic}
\| \mbs{u}^h_{\eta} \|_{[H^1(\Omega)]^d}
\lesssim
\| \ell \|_{[H^{-1}(\Omega)]^d} .
\end{equation}

The main result of this work is provided by the next theorem, which guarantees the convergence of the
proposed finite element discretization to \emph{a} solution of the traction problem, precisely the
one of minimal $L^2$ norm:

\begin{theorem}
  \label{theo-fem}
  Let $[\mbs{u}]$ be the solution to the Neumann problem~\eqref{eq-weak-neumann} and $\mbs{u}^h_{\eta}$ be the
  solution to~\eqref{eq-fem}. Assume, furthermore, that the solution $\mbs{u}_{\eta}$ satisfies the
  regularity estimate
  \begin{equation}
  \label{eq-regularity}
  | \mbs{u}_\eta|_{[H^{k+1}(\Omega)]^d}
  \lesssim
  \| \ell \|_{[H^{-1}(\Omega)]^d}
  \end{equation}
  for some constant $C$. Then, the following error estimate holds
  \begin{equation}
  \label{eq-end}
  \| \mbs{u}_0 - \mbs{u}_{\eta}^h \|_{[H^1(\Omega)]^d}
  \lesssim
  (\eta + h^k) \|l\|_{[H^{-1}(\Omega)]^d}\ ,
\end{equation}
where $\mbs{u}_0$ is the member of $[\mbs{u}]$ with minimal~$L^2$ norm.
\end{theorem}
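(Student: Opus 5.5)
The plan is to split the error with the triangle inequality,
\begin{equation*}
\| \mbs{u}_0 - \mbs{u}^h_{\eta} \|_{[H^1(\Omega)]^d}
\le
\| \mbs{u}_0 - \mbs{u}_{\eta} \|_{[H^1(\Omega)]^d}
+
\| \mbs{u}_{\eta} - \mbs{u}^h_{\eta} \|_{[H^1(\Omega)]^d}\ .
\end{equation*}
The first term is a regularization error. By Theorem~\ref{thm-main} it is bounded by $C\,\eta\,\|\ell\|_{[H^{-1}(\Omega)]^d}$, so it needs no further work. The second term is a pure Galerkin error for the regularized problem~\eqref{eq-weak-regularized}. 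For it I will use a Céa-type argument that stays uniform in $\eta$.

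The key observation for the Galerkin term is that $\mathcal{V}^h\supset\mathcal{R}$. Both $\mbs{u}_\eta$ (by Eq.~\eqref{eq-ueta-orth}) and $\mbs{u}^h_\eta$ are centered, so $\mbs{e}\defined\mbs{u}_\eta-\mbs{u}^h_\eta$ is centered. Galerkin orthogonality gives $a_\eta(\mbs{e},\mbs{w}^h)=0$ for all $\mbs{w}^h\in\mathcal{V}^h$.

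Let $\mbs{\pi}^h\mbs{u}_\eta\in\mathcal{V}^h$ be a standard interpolant with $\|\mbs{u}_\eta-\mbs{\pi}^h\mbs{u}_\eta\|_{[H^1(\Omega)]^d}\lesssim h^k|\mbs{u}_\eta|_{[H^{k+1}(\Omega)]^d}$. Then
\begin{equation*}
\etanorm{\mbs{e}}^2 = a_\eta(\mbs{e},\mbs{u}_\eta-\mbs{\pi}^h\mbs{u}_\eta)\le \etanorm{\mbs{e}}\,\etanorm{\mbs{u}_\eta-\mbs{\pi}^h\mbs{u}_\eta}\ ,
\end{equation*}
which yields $\etanorm{\mbs{e}}\le\etanorm{\mbs{u}_\eta-\mbs{\pi}^h\mbs{u}_\eta}$.

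I then pass between norms with Lemma~\ref{lem-norms}. On the left, $\mbs{e}$ is centered, so bound~\eqref{eq-eta-bound1} gives $\|\mbs{e}\|_{[H^1(\Omega)]^d}\lesssim\etanorm{\mbs{e}}$ with a constant independent of $\eta$. This step relies on Lemma~\ref{lem-KPW} and on the fact that $a_\eta\ge a$. On the right, bound~\eqref{eq-eta-bound2} gives $\etanorm{\mbs{u}_\eta-\mbs{\pi}^h\mbs{u}_\eta}\lesssim\max(1,\sqrt\eta)\,h^k|\mbs{u}_\eta|_{[H^{k+1}(\Omega)]^d}$. Since $\eta$ is small (say $\eta\le1$), the factor $\max(1,\sqrt\eta)$ equals one. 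The regularity assumption~\eqref{eq-regularity} then bounds this by $h^k\|\ell\|_{[H^{-1}(\Omega)]^d}$. Adding the two contributions gives~\eqref{eq-end}.

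The main obstacle is keeping every constant independent of $\eta$. The coercivity constant of $a_\eta$ on all of $[H^1(\Omega)]^d$ degenerates like $\eta$, so a naive Céa lemma would give a factor $1/\eta$. The plan avoids this by applying coercivity only to the centered error $\mbs{e}$. That in turn requires checking that $\mbs{e}$ is genuinely centered, which holds because the discrete space contains the rigid motions and the loads are equilibrated. The energy-norm detour is what removes any need to interpolate within $\mathcal{C}$. If one preferred to avoid it, the fallback would be to replace $\mbs{\pi}^h\mbs{u}_\eta$ by its centered projection: subtracting its rigid part, which lies in $\mathcal{V}^h$, only changes it by the rigid part of $\mbs{u}_\eta-\mbs{\pi}^h\mbs{u}_\eta$, and this is controlled in $L^2$.
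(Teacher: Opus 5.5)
Your proposal is correct and follows the paper's proof: the same triangle inequality, Theorem~\ref{thm-main} for the regularization error, and a Céa-type estimate combined with the regularity assumption~\eqref{eq-regularity} for the Galerkin error. Your energy-norm argument also fills in the $\eta$-uniformity that the paper leaves implicit when it invokes the ``standard'' a priori estimate~\eqref{eq-fem-estimate}: because $\mathcal{R}\subset\mathcal{V}^h$, the error $\mbs{u}_\eta-\mbs{u}^h_\eta$ is centered, so~\eqref{eq-eta-bound1} applies with a constant independent of $\eta$.
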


\begin{proof}
To show this, we start by noticing that the triangle inequality gives
\begin{equation}
\| \mbs{u}_0 - \mbs{u}_{\eta}^h \|_{[H^1(\Omega)]^d}
\le
\| \mbs{u}_0 - \mbs{u}_{\eta} \|_{[H^1(\Omega)]^d}
+
\| \mbs{u}_\eta - \mbs{u}_{\eta}^h \|_{[H^1(\Omega)]^d}\,.
\end{equation}
The first term on the right-hand side can be bounded using Eq.~\eqref{eq-th1}. The standard
finite element \emph{a priori} estimate is
\begin{equation}
\label{eq-fem-estimate}
\| \mbs{u}_\eta  - \mbs{u}^h_\eta \|_{[H^1(\Omega)]^d}
\lesssim
h^k\; | \mbs{u}_{\eta} |_{[H^{k+1}(\Omega)]^d}\ .
\end{equation}
Combining Eqs.~\eqref{eq-regularity} and~\eqref{eq-fem-estimate}, the theorem follows.
\end{proof}

This result shows that for any fixed value of the relaxation parameter $\eta$, the finite element
method converges optimally to the solution of the relaxed problem. In addition,
the finite element discretization preserves the convergence of the regularized solution to the exact solution of
the traction problem and choosing $h$ and $\eta$ as small as desired, the error between the
finite element solution and the one sought goes to zero. Note that if we wish that
the finite element discretization converges to the Neumann solution $\mbs{u}_0$ when the mesh
is refined, we must insist that
\begin{equation}
    \lim_{\eta,h\to0} \frac{\eta}{h^k} = 0\ .
  \end{equation}

\subsection{An iterative solution}
\label{subs-iterative}

From the algebraic point of view, the regularized solution proposed in this section has the effect
of replacing the singular stiffness of the pure Neumann formulation with a regular one, thus turning
the Neumann problem solvable with a unique solution. The
condition number of stiffness matrix in the regularized problem, however, deteriorates with
$\eta\to0$. This is a concern that should be taken into consideration since floating point
computations are of limited precision and badly-conditioned matrices can lead to erroneous solutions
in practice.

This aspect has been carefully analyzed by Dai \cite{dai2007ol}, where a regularized method
was proposed that is similar to the one introduced in Section~\ref{sec-regularized}. Adapting the
ideas of that work to our method, we can propose an iterative solution that can avoid the
ill-conditioned problems of regularized solutions.

\begin{theorem}
  Consider a pure Neumann problem and the sequence $\{\mbs{u}_h^k\}_{k=0}^{\infty}$ of finite element
  solutions obtained by solving
  \begin{equation}
    \label{eq-iterative}
    \begin{split}
      a_{\eta}(\mbs{u}_h^0,\mbs{v}_h) &= \ell(\mbs{v}_h)\ , \\
      a_{\eta}(\mbs{u}_h^{k+1},\mbs{v}_h) &= \ell(\mbs{v}_h) + \eta(\mbs{u}_h^k,\mbs{v}_h)\ ,
   \end{split}
 \end{equation}
 for all $\mbs{v}_h\in \mathcal{V}^h$. Let $\alpha$ denote the coercivity constant of the bilinear form $a_{\eta}(\cdot,\cdot)$. Then the iterative method~\eqref{eq-iterative} converges when $k\to\infty$ 
 to the solution $\mbs{u}_h$ of the discretized Neumann problem for all $0<\eta < 1/\alpha$.
\end{theorem}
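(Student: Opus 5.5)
The plan is to recast the iteration~\eqref{eq-iterative} as an iterated Tikhonov (proximal point) scheme and show that the error is contracted by a fixed factor strictly less than one in the $L^2$ norm, and then also in $H^1$. Let $\mbs{u}_h\in\mathcal{V}^h$ denote the centered solution of the discretized Neumann problem, i.e.\ $a(\mbs{u}_h,\mbs{v}_h)=\ell(\mbs{v}_h)$ for all $\mbs{v}_h\in\mathcal{V}^h$ with $(\mbs{u}_h,\mbs{\rho})=0$ for all $\mbs{\rho}\in\mathcal{R}$. This is the discrete analogue of $\mbs{u}_0$ from Lemma~\ref{lem-u0}, and it exists because $\mathcal{R}\subset\mathcal{V}^h$ and $\ell$ is equilibrated. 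The key observation is that $\mbs{u}_h$ is a fixed point of the iteration:
\begin{equation*}
a_\eta(\mbs{u}_h,\mbs{v}_h)=\ell(\mbs{v}_h)+\eta(\mbs{u}_h,\mbs{v}_h).
\end{equation*}

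First, I would show by induction that every iterate $\mbs{u}_h^k$ is centered. For $k=0$ this is the computation already carried out after~\eqref{eq-fem2}. For the inductive step, take $\mbs{v}_h=\mbs{\rho}\in\mathcal{R}$ in~\eqref{eq-iterative} and use~\eqref{eq-a-orthogonal} together with $\ell(\mbs{\rho})=0$. This gives $\eta(\mbs{u}_h^{k+1},\mbs{\rho})=\eta(\mbs{u}_h^k,\mbs{\rho})=0$. Next, define the error $\mbs{e}^k\defined\mbs{u}_h^k-\mbs{u}_h$. Subtracting the fixed-point identity from~\eqref{eq-iterative} yields
\begin{equation*}
a(\mbs{e}^{k+1},\mbs{v}_h)+\eta(\mbs{e}^{k+1},\mbs{v}_h)=\eta(\mbs{e}^k,\mbs{v}_h)\qquad\forall\,\mbs{v}_h\in\mathcal{V}^h .
\end{equation*}
The same identity holds for $k=-1$ if we set $\mbs{e}^{-1}\defined-\mbs{u}_h$, which corresponds to $\mbs{u}_h^{-1}=\mbs{0}$. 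All $\mbs{e}^k$ are therefore centered.

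Now test with $\mbs{v}_h=\mbs{e}^{k+1}$. Since $\mbs{e}^{k+1}$ is centered, Lemma~\ref{lem-KPW} and the bound $a(\mbs{v},\mbs{v})\ge 2\mu\triple{\mbs{v}}^2$ give $a(\mbs{e}^{k+1},\mbs{e}^{k+1})\ge c\,\|\mbs{e}^{k+1}\|^2_{[H^1(\Omega)]^d}\ge c\,\|\mbs{e}^{k+1}\|^2_{[L^2(\Omega)]^d}$, with $c>0$ independent of $h$ and $\eta$. Combining this with Cauchy--Schwarz gives
\begin{equation*}
(c+\eta)\,\|\mbs{e}^{k+1}\|_{[L^2(\Omega)]^d}\le\eta\,\|\mbs{e}^{k}\|_{[L^2(\Omega)]^d},
\end{equation*}
and hence $\|\mbs{e}^{k}\|_{[L^2(\Omega)]^d}\le q^{k+1}\|\mbs{u}_h\|_{[L^2(\Omega)]^d}$ with $q\defined\eta/(c+\eta)<1$. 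Returning to the same tested identity, $c\|\mbs{e}^{k+1}\|^2_{[H^1(\Omega)]^d}\le\eta\|\mbs{e}^k\|\,\|\mbs{e}^{k+1}\|$, so the $H^1$ error also decays geometrically, bounded by a constant times $q^{k}$. Thus $\mbs{u}_h^k\to\mbs{u}_h$ as $k\to\infty$.

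The contraction factor $q<1$ holds for every $\eta>0$, and in particular in the range $0<\eta<1/\alpha$ assumed in the statement. I would identify $\alpha$ with the coercivity constant $c$ on centered fields and remark that the hypothesis is sufficient but not necessary. The main obstacle is the step that makes coercivity usable: the form $a$ alone is only semidefinite on $\mathcal{V}^h$. Everything hinges on proving that the iterates and the errors remain in the centered subspace, so that Lemma~\ref{lem-KPW} applies to them, and on choosing the centered discrete solution as the limit, because the iteration cannot select any other member of its class.
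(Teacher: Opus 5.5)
Your proof is correct, and it reaches a stronger conclusion than the paper's through different bookkeeping.

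\textbf{The paper's route.} The paper subtracts two consecutive steps and works with the increments $\mbs{u}_h^{k+2}-\mbs{u}_h^{k+1}$. It bounds $a_\eta$ from below by a single constant $\alpha$, in the convention $\|\mbs{v}\|^2_{[H^1(\Omega)]^d}\le\alpha\,a_\eta(\mbs{v},\mbs{v})$, so $\alpha$ plays the role of $1/c$, not $c$. It then bounds the $L^2$ product by $H^1$ norms. This gives the contraction factor $\alpha\eta$ for the increments, hence the restriction $\eta<1/\alpha$, and Banach's theorem supplies the limit. That this limit solves the discrete Neumann problem follows by passing to the limit in the iteration, a step the paper leaves implicit.

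\textbf{Your route.} You measure the error against the fixed point, keep $\eta\|\mbs{e}^{k+1}\|^2_{[L^2(\Omega)]^d}$ on the left, and use coercivity of $a$ alone on centered fields. This yields the factor $\eta/(c+\eta)$, which is below one for \emph{every} $\eta>0$ and improves as $\eta\to0$. So the hypothesis $\eta<1/\alpha$ is an artifact of the cruder estimate.

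You also make explicit that the iterates stay centered. The paper uses this only tacitly, but it is indispensable. On all of $\mathcal{V}^h$, a constant field $\mbs{\rho}$ satisfies $\|\mbs{\rho}\|^2_{[H^1(\Omega)]^d}=\eta^{-1}a_\eta(\mbs{\rho},\mbs{\rho})$. Hence $\alpha\ge 1/\eta$, and the paper's condition $\eta<1/\alpha$ could never hold unless $\alpha$ is understood as the constant on centered fields.

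\textbf{What your route costs.} The limit must be known to exist beforehand, whereas the contraction/Banach argument produces it. Your one-line justification of existence is right but should be spelled out. Apply Lax--Milgram on $\mathcal{V}^h\cap\mathcal{C}$. Then extend to all test functions using $\mathcal{R}\subset\mathcal{V}^h$, $\ell(\mbs{\rho})=0$ and $a(\cdot,\mbs{\rho})=0$.

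A cosmetic point: with the paper's literal definition of $a$, which carries $\mu$ rather than $2\mu$, your lower bound should read $a(\mbs{v},\mbs{v})\ge\mu\triple{\mbs{v}}^2$. The constant is immaterial.
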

\begin{proof}
  Consider the solution of two consecutive iterative steps $k,k+1$ with $k>0$. Then, by subtracting
  the equations of the method it follows that
  \begin{equation}
   a_{\eta}(\mbs{u}_h^{k+2}-\mbs{u}_h^{k+1}, \mbs{v}_h) = \eta\, (\mbs{u}_h^{k+1}-\mbs{u}_h^k,\mbs{v}_h)\ .
  \end{equation}
  Then, by the coercivity of the bilinear form $a_{\eta}(\cdot,\cdot)$ we have that
  \begin{equation}
    \begin{split}
    \| \mbs{u}_h^{k+2}-\mbs{u}_h^{k+1}\|_{[H^1(\Omega)]^d} &\le
                                                 \alpha\; \etanorm{\mbs{u}_h^{k+2}-\mbs{u}_h^{k+1}}^2 \\
      &= \alpha\; a_{\eta}(\mbs{u}_h^{k+2}-\mbs{u}_h^{k+1}, \mbs{u}_h^{k+2}-\mbs{u}_h^{k+1})\\
      &=
        \alpha\; \eta\; (\mbs{u}_h^{k+1}-\mbs{u}_h^{k}, \mbs{u}_h^{k+2}-\mbs{u}_h^{k+1}) \\
      &\le
    \alpha; \eta\; \|\mbs{u}_h^{k+1}-\mbs{u}_h^{k}\|_{[H^1(\Omega)]^d}\; \|\mbs{u}_h^{k+2}-\mbs{u}_h^{k+1}\|_{[H^1(\Omega)]^d}\ .
    \end{split}
  \end{equation}
  Simplifying this relation and selecting $\eta < 1/\alpha$, we have that
  \begin{equation}
    \| \mbs{u}_h^{k+2} - \mbs{u}_h^{k+1}\|_{[H^1(\Omega)]^d}
    <
   \| \mbs{u}_h^{k+1} - \mbs{u}_h^{k}\|_{[H^1(\Omega)]^d}\ ,
  \end{equation}
  proving that the map $\mbs{u}_h^k\mapsto \mbs{u}_h^{k+1}$ is contractive. By Banach's fixed point theorem,
  there exists a unique $\mbs{u}_h$ such that $\lim_{k\to\infty} \|\mbs{u}_h- \mbs{u}_h^k\|_{[H^1(\Omega)]^d}=0$.
\end{proof}

Notice that the iterative method~\eqref{eq-iterative} can be alternatively be written as:
\begin{equation}
    \label{eq-iterative2}
    \begin{split}
      a_{\eta}(\mbs{u}_h^0,\mbs{v}_h) &= \ell(\mbs{v}_h)\ , \\
      (\frac{\mbs{u}_h^{k+1}-\mbs{u}_h^k}{\eta^{-1}},\mbs{v}_h)
      +
      a(\mbs{u}_h^{k+1},\mbs{v}_h) 
      &= \ell(\mbs{v}_h)\ ,
   \end{split}
 \end{equation}
 which is the backward-Euler discretization of a parabolic problem. Thus, the iterative method can be
 interpreted as a discrete evolution that, due to dissipation, converges to the equilibrated Neumann
 solution.

\section{Non-equilibrated loads}
\label{sec-noneq}
Sections~\ref{sec-analysis} and \ref{sec-fem} have addressed the pure Neumann problem, where a
solution or an approximation of it is sought to a problem with perfectly equilibrated loading. In
practice, it often happens that the spatial discretization of the solution domain introduces errors
that spoil the load compatibility. Strictly speaking, these problems with (slightly) non-equilibrated loading are not pure traction problems and therefore, they do not have a solution. However, it may turn out that the only way to
obtain an approximate solution to a pure traction problem is through a discretization. We will study
next how the method of Section~\ref{sec-regularized} can be used to ensure convergent approximations
to Neumann problems, even if the discretization upsets the compatibility of the loading.

A first, a very common, situation where the discretization spoils the compatibility of a Neumann
problem is due simply by the use of a mesh. By doing so, the domain $\Omega$ is often replaced
by a polyhedral domain. There might be situations where a statically equilibrated loading fails to
be so when the exact domain is replaced with an approximated one. An ever more subtle side effect of
the tesselation of the solution domain is that, in general, meshes introduce a certain anisotropy in
the approximation spaces. Although such an undesirable artifact vanishes when the mesh size tends to zero,
it nevertheless might spoil the compatibility of the loading.

A second illustrative example where these difficulties might arise is related to finding
displacements of a deformable body that is rotating with uniform angular velocity. These
\emph{relative equilibria} are paradigmatic examples of Neumann problems
\cite{simo1991ti,armero2001vc,romero2001th}. To calculate all the relevant quantities, it suffices to
impose onto the body a field of central forces with its origin at its center of mass
and modulus, at each point, proportional to the distance to the origin. However, a slight
misrepresentation of the coordinates of this point (be it due to a calculation error or just
truncation) will produce a load that is no longer compatible yielding, hence, an ill-posed problem.

Given its practical importance, we examine next the impact that ``small'' load incompatibilities
produce in the numerical solution of pure traction problems. Before that, let us point out that the answer
depends on the method employed for its solution. For example, if the Neumann problem is solved using
carefully selected boundary conditions to preclude rigid body motions, then small incompatibilities
would only induce some reaction forces on those constraints, which are supposed to vanish exactly
for equilibrated loading. In the example of the relative equilibrium, however, we note that all the
displacements of the center of mass of the solid should be constrained, a condition that is only
easily implemented if this point coincides with a node, which most likely would not occur for bodies
with complex geometries.

\subsection{Perturbed Neumann problems}

Consider again the finite element solution to a Neumann problem. The starting point of the discretization is the partition of the domain $\Omega$ onto a finite collection of disjoint elements
$\Omega_e$ such that $\Omega_h\defined \cup_e  \Omega_e \approx \Omega$. This error in the
geometrical representation of the domain depends on the mesh size and we thus assume that
\begin{equation}
  \label{eq-omega-error}
  |\Omega - \Omega_h| \lesssim h^d\ .
\end{equation}
As a result of this misrepresentation, instead of the variational problem~\eqref{eq-weak-neumann},
the discretized solution becomes the function $\mbs{u}_h\in \mathcal{V}_h$ that satisfies the perturbed variational problem
\begin{equation}
  \label{eq-discrete-alh}
  a^h(\mbs{u}_h,\mbs{w}_h) = \ell^h(\mbs{w}_h)\ ,
\end{equation}
for all $\mbs{w}_h\in \mathcal{V}_h$ and
\begin{subequations}
  \label{eq-formsh}
  \begin{align}
a^h(\mbs{u}, \mbs{v})
  &\defined
\int_{\Omega_h} (\mu \nabla^s \mbs{u}(\mbs{x}): \nabla^s \mbs{v}(\mbs{x}) +
    \lambda \nabla\cdot \mbs{u}(\mbs{x})\;\nabla\cdot \mbs{v}(\mbs{x})) \; dV\ ,
    \label{eq-formsh1}\\
\ell^h(\mbs{v}) &\defined
\int_{\Omega_h} \mbs{f}(\mbs{x})\cdot \mbs{v}(\mbs{x}) \; dV
+
                \int_{\partial\Omega_h} \mbs{t}(\mbs{x})\cdot \mbs{v}(\mbs{x}) \; dA\; .
                \label{eq-formsh2}
  \end{align}
\end{subequations}
This approximation of the boundary value problems is known as a \emph{variational crime} and its
effects were already studied by Strang decades ago \cite{strang1973vc,ciarlet1978tm}. In the Neumann
problem, however, it is not enough to use this classical results because the convergence results of
Section~\ref{sec-analysis} depend on the bilinear form $a_{\eta}(\cdot,\cdot)$ vanishing on $\mathcal{R}$ and the linear
form $\ell(\cdot)$ being equilibrated. These two critical conditions need to be revisited in light of the
differences between Eqs.~\eqref{eq-weak-neumann} and~\eqref{eq-discrete-alh}.

We first characterize the class of incompatible or non-equilibrated loads.
\begin{definition}
  \label{def-ell-split}
  Let $\ell$ be a load, a continuous functional on the dual space of $[H^1(\Omega)]^d$ which we
  identify with $[H^{-1}(\Omega)]^d$. This load admits the unique decomposition
  \begin{equation}
    \label{eq-dual-load}
    \ell = \ell_0 + \ell_{\perp}
  \end{equation}
  where, for all $\mbs{w}\in [H^1(\Omega)]^d$ we define

  \begin{equation}
    \label{eq-load-split}
    \ell_0(\mbs{w}) \defined \ell(\mbs{w}_0)\ ,
    \qquad
    \ell_{\perp}(\mbs{w}) \defined \ell(\mbs{w}_{\perp})\ ,
  \end{equation}
  where $\mbs{w}_0$ and $\mbs{w}_{\perp}$ are as in Eq.~\eqref{eq-direct-sum2}, with the
  integrals in the definition of centered fields evaluated over $\Omega_h$.

  Since $\ell_0(\mbs{\rho})=0$ for all $\mbs{\rho}\in \mathcal{R}$, we refer to $\ell_0$ as the
  compatible or equilibrated component of $\ell$; respectively, $\ell_{\perp}$ is the incompatible
  or non-equilibrated part of the load.
  \end{definition}

As explained, when discretizing a Neumann problem, it might happen that the loading functional
$\ell$ is replaced by an approximated loading $\ell^h$ that is not equilibrated, invalidating the convergence
analysis of Section~\ref{sec-fem}. Likewise, the bilinear form $a_{\eta}(\cdot,\cdot)$ of the
continuous problem might be replaced with an approximate bilinear form $a_{\eta}^h(\cdot,\cdot)$
such that
\begin{equation}
  \label{eq-approx-aheta}
  a_{\eta}^h(\mbs{v},\mbs{w}) \defined a^h(\mbs{v},\mbs{w}) + \eta\,(\mbs{v},\mbs{w})^h\ ,
\end{equation}
where $a^h(\cdot,\cdot)$ and $\ell^h(\cdot)$. are defined in Eq.~\eqref{eq-formsh} and
\begin{equation}
  \label{eq-approx-l2}
  (\mbs{v}, \mbs{w})^h
  \defined
  \int_{\Omega^h} \mbs{v}(\mbs{x})\cdot \mbs{w}(\mbs{x}) \; \mathrm{d} V
\end{equation}
is the $L^2$ inner product of functions defined on $\Omega^h$.

Under these premises, we hereafter consider the following problem: find $\mbs{u}_{\eta}\in [H^1(\Omega)]^d$ such that
\begin{equation}
  \label{eq-discrete-h}
  a_{\eta}^h(\mbs{u}^{h}_{\eta}, \mbs{w}^h) = \ell^h(\mbs{w}^h)\ ,
\end{equation}
for all $\mbs{w} \in [H^1(\Omega)]^d$. In what follows, we will only consider symmetric bilinear forms $a^h(\cdot,\cdot)$ that verify
\begin{equation}
  \label{eq-bilin-approx-ker}
  a^h(\mbs{w},\mbs{\rho}) = a^h(\mbs{\rho},\mbs{w}) = 0\ ,
\end{equation}
for all $\mbs{w}\in [H^1(\Omega)]^d$ and $\mbs{\rho}\in \mathcal{R}$. This condition is always
satisfied for bilinear forms defined as in Eq.~\eqref{eq-formsh1} since the symmetric gradient of
functions on $\mathcal{R}$ vanish identically.

For all $\mbs{w}\in [H^1(\Omega)]^d$, let us define
\begin{equation}
  \label{eq-discrete-norms}
  \triple{\mbs{w}}^h \defined \left( a^h(\mbs{w}, \mbs{w}) \right)^{1/2}\ ,
  \qquad
  \etanorm{\mbs{w}}^h \defined \left( a^h_{\eta}(\mbs{w},\mbs{w}) \right)^{1/2}\ ,
\end{equation}
and assume that, extending Lemma~\ref{lem-norms}, the function $\triple{\cdot}^h$ is a norm on
$[H^1(\Omega)]^d\cap \mathcal{C}$. Also, for $\mbs{w}\in [H^1(\Omega)]^d$
\begin{equation}
  \label{eq-hhh}
  \triple{\mbs{w}}_{\eta}^h \lesssim \max(1,\sqrt{\eta})\; \| \mbs{w} \|_{[H^1(\Omega)]^d}\ ,
\end{equation}
whereas if $\mbs{w}\in [H^1(\Omega)]^d\cap \mathcal{C}$, then
\begin{equation}
  \label{eq-hhhi}
  \| \mbs{w} \|_{[H^1(\Omega)]^d}
  \lesssim
  \triple{\mbs{w}}_{\eta}^h\ .
\end{equation}

\subsection{The continuous regularized Neumann problem with non-equilibrated loading}

We study first the effects of non-equilibrated loading on a continuous solution of the regularized
problem defined on the approximate domain. Note that a similar analysis can not be performed to the
standard Neumann problem since, since the load is not equilibrated, it has no solution.

\begin{lemma}
\label{lem-prob-split}
Let $\ell\in [H^{-1}(\Omega)]$ be a loading with equilibrated and non-equilibrated components as in Eq.~\eqref{eq-load-split}. Then, the centered and rigid body components of the solution $\mbs{u}_\eta$ to the regularized problem~\eqref{eq-discrete-h} are the solutions to
\begin{equation}
\begin{split}
  a_{\eta}(\mbs{u}_{\eta,0}, \mbs{w}) &= \ell_0(\mbs{w})\ , \\
  a_{\eta}(\mbs{u}_{\eta,\perp}, \mbs{w})  &= \ell_\perp(\mbs{w})\ , \\
\end{split}
\end{equation}
for all $\mbs{w}\in [H^1(\Omega)]^d$. The same result holds for the Galerkin approximation~\eqref{eq-fem2}.
\end{lemma}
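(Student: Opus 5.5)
Since the problem is linear and the regularized form is uniquely solvable, the idea is to solve the two auxiliary problems separately, then show their solutions are exactly the centered and rigid components of $\mbs{u}_\eta$. We work throughout with the forms on $\Omega_h$, as in Definition~\ref{def-ell-split}: $a_\eta$ denotes $a^h_\eta$, and centering, $\mathcal{C}$, $\mathcal{R}$ and the split~\eqref{eq-direct-sum2} are taken with respect to $(\cdot,\cdot)^h$. Lemma~\ref{lem-brackets} holds verbatim with $\Omega$ replaced by $\Omega_h$.

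\emph{Step 1: the two auxiliary problems are well posed.} Coercivity of $a_\eta$ on $[H^1(\Omega)]^d$ for fixed $\eta>0$ follows from Korn's second inequality, since $a_\eta(\mbs{v},\mbs{v}) \ge \min(\mu,\eta)\,(\triple{\mbs{v}}^2+\|\mbs{v}\|^2)$. Hence each problem has a unique solution; call them $\mbs{z}_0$ and $\mbs{z}_\perp$. By linearity, $\mbs{z}_0+\mbs{z}_\perp$ solves~\eqref{eq-discrete-h} with load $\ell=\ell_0+\ell_\perp$, and uniqueness gives $\mbs{u}_\eta=\mbs{z}_0+\mbs{z}_\perp$.

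\emph{Step 2: identify the components.} By the uniqueness of the orthogonal decomposition in Lemma~\ref{lem-brackets}, it suffices to show $\mbs{z}_0\in\mathcal{C}$ and $\mbs{z}_\perp\in\mathcal{R}$.

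For $\mbs{z}_0$, test with $\mbs{w}=\mbs{\rho}\in\mathcal{R}$. By~\eqref{eq-bilin-approx-ker}, $a(\mbs{z}_0,\mbs{\rho})=0$, and since $\mbs{\rho}_0=\mbs{0}$ we have $\ell_0(\mbs{\rho})=\ell(\mbs{\rho}_0)=0$. Therefore $\eta(\mbs{z}_0,\mbs{\rho})=0$, exactly as in~\eqref{eq-ueta-ortho}, so $\mbs{z}_0$ is centered.

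For $\mbs{z}_\perp$, the key point is that $\ell_\perp$ is a finite-dimensional functional living on $\mathcal{R}$. The map $\mbs{w}\mapsto\ell(\mbs{w}_\perp)$ restricted to $\mathcal{R}$ is a linear functional on a finite-dimensional space. By Riesz in $(\mathcal{R},(\cdot,\cdot))$ there is a unique $\mbs{r}\in\mathcal{R}$ with $\ell(\mbs{\rho})=\eta(\mbs{r},\mbs{\rho})$ for all $\mbs{\rho}\in\mathcal{R}$. We then check directly that $\mbs{r}$ solves the second problem. For any $\mbs{w}$,
\[
a_\eta(\mbs{r},\mbs{w}) = a(\mbs{r},\mbs{w}) + \eta(\mbs{r},\mbs{w}_0+\mbs{w}_\perp) = 0 + 0 + \eta(\mbs{r},\mbs{w}_\perp) = \ell(\mbs{w}_\perp) = \ell_\perp(\mbs{w}),
\]
using~\eqref{eq-bilin-approx-ker} and the orthogonality $(\mbs{r},\mbs{w}_0)=0$. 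By uniqueness, $\mbs{z}_\perp=\mbs{r}\in\mathcal{R}$. This gives the claimed splitting.

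\emph{Galerkin case.} For~\eqref{eq-fem2} the same argument applies verbatim in $\mathcal{V}^h$. The space $\mathcal{V}^h$ contains $\mathcal{R}$, and the $L^2$-orthogonal projection onto $\mathcal{R}$ maps $\mathcal{V}^h$ into itself, because $\mbs{w}^h_0=\mbs{w}^h-\mbs{w}^h_\perp\in\mathcal{V}^h$. So $\ell_0,\ell_\perp$ restrict consistently, $\mbs{r}\in\mathcal{V}^h$, and the centering computation uses test functions $\mbs{\rho}\in\mathcal{R}\subset\mathcal{V}^h$.

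The main obstacle I expect is Step 2 for $\mbs{z}_\perp$: one must notice that the solution is the explicit rigid motion $\mbs{r}$ obtained by Riesz on $\mathcal{R}$, since $a$ vanishes there and the $L^2$ term decouples. A further subtlety is making sure all orthogonality statements use the same inner product $(\cdot,\cdot)^h$ that defines both the centering and the regularization.
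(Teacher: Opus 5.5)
Your proof is correct. For the rigid component it takes a different route from the paper.

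\textbf{Where the two routes differ.} The paper also writes $\mbs{u}_\eta=\mbs{u}_1+\mbs{u}_2$ and shows that $\mbs{u}_1$ is centered exactly as you do. For $\mbs{u}_2$, however, it argues non-constructively. It tests the second problem with the centered part $\mbs{u}_{2,0}$ of its own solution. It then uses two facts: $a_\eta$ decouples across $\mathcal{C}\oplus\mathcal{R}$, so $a_\eta(\mbs{u}_2,\mbs{u}_{2,0})=a_\eta(\mbs{u}_{2,0},\mbs{u}_{2,0})$, and $\ell_\perp$ annihilates $\mathcal{C}$. The coercivity of Lemma~\ref{lem-KPW} then forces $\mbs{u}_{2,0}=\mbs{0}$. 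You instead exhibit the solution directly as the Riesz representer $\mbs{r}\in\mathcal{R}$ of $\ell|_{\mathcal{R}}$ with respect to $\eta(\cdot,\cdot)$, and conclude by uniqueness.

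\textbf{What each route buys.} The paper's argument is shorter and does not rely on the finite dimensionality of $\mathcal{R}$. Yours gives an explicit characterization: $\eta(\mbs{u}_{\eta,\perp},\mbs{\rho})=\ell(\mbs{\rho})$ for all $\mbs{\rho}\in\mathcal{R}$. This has two consequences.
\begin{enumerate}
\item The non-equilibrated component scales exactly like $\eta^{-1}$, so the $\eta$-dependence in \eqref{eq-neq-b2} is sharp.
\item It shows why the corrector \eqref{eq-two-step2} works, since $\eta(\mbs{u}^h_p,\mbs{w}^h)^h=\eta(\mbs{u}^h_{p,0},\mbs{w}^h)^h+\ell^h_\perp(\mbs{w}^h)$.
\end{enumerate}
Your Step~1 also supplies something the paper leaves implicit. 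You prove the auxiliary problems are well posed for non-equilibrated loads, via Korn's second inequality with constant $\min(\mu,\eta)$. The paper's justification of well-posedness, in the proof of Theorem~\ref{theo-reg}, relies on the load being equilibrated.
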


\begin{proof}
  By linearity, the solution to the regularized Neumann problem is the displacement
  $\mbs{u}_{\eta}=\mbs{u}_1+\mbs{u}_2$, with
\begin{subequations}
  \begin{align}
  a_{\eta}(\mbs{u}_{1}, \mbs{w}) &= \ell_0(\mbs{w})\ , \label{eq-pfs-1}\\
  a_{\eta}(\mbs{u}_{2}, \mbs{w}) &= \ell_\perp(\mbs{w})\ . \label{eq-pfs-2}
\end{align}
\end{subequations}
Choosing $\mbs{w}\in \mathcal{R}$ in Eq.~\eqref{eq-pfs-1} we show that $\mbs{u}_1$ is centered.
Next, we select $\mbs{w}$ in Eq.~\eqref{eq-pfs-2} to be $\mbs{u}_{2,0}$, the centered part of
$\mbs{u}_2$. Using Lemma~\ref{lem-KPW} and Eq.~\eqref{eq-pfs-2}, we have that
\begin{equation}
  \| \mbs{u}_{2,0} \|^2_{[H^1(\Omega)]^d} \lesssim
  a_{\eta}(\mbs{u}_{2,0},\mbs{u}_{2,0})
  = \ell_{\perp}(\mbs{u}_{2,0})
  = 0\ .
\end{equation}
Hence the centered component of $\mbs{u}_2$ vanishes and thus $\mbs{u}_2\in \mathcal{R}$. Since the
split~\eqref{eq-u-split} is unique, we conclude that
\begin{equation}
  \mbs{u}_1 = \mbs{u}_{\eta,0}\ ,
  \qquad
  \mbs{u}_2 = \mbs{u}_{\eta,\perp}\ ,
\end{equation}
as claimed in the Lemma.
\end{proof}

For the method advocated in the current work, the next result summarizes its performance for
non-compatible loading.

\begin{theorem}
  \label{theo-neq}
  Consider the finite element solution to a pure traction problem with non-compatible external loading~$\ell^h$. Then, the solution $\mbs{u}^h_{\eta}$ to the regularized formulation~\eqref{eq-discrete-h} can be uniquely split as
  \begin{equation}
  \label{eq-neq-usplit}
  \mbs{u}_{\eta}^h =
  \mbs{u}^h_{\eta,0}
  +
  \mbs{u}^h_{\eta,\perp}\,,
\end{equation}
where $\mbs{u}^h_{\eta,0},\ \mbs{u}^h_{\eta,\perp} $ are the displacement fields due
to, respectively, the compatible and incompatible parts of the loads. Moreover,
\begin{subequations}
  \label{eq-neq-bounds}
  \begin{align}
    \| \mbs{u}^h_{\eta,0}\|_{[H^1(\Omega)]^d} &\lesssim \| \ell^h_0\|_{[H^{-1}(\Omega)]^d}\ ,
                                                \label{eq-neq-b1}
    \\
    \| \mbs{u}^h_{\eta,\perp}\|_{[H^1(\Omega)]^d} &\lesssim
                                                    {\max(1,\eta^{-1})}\;
     \| \ell^h_\perp\|_{[H^{-1}(\Omega)]^d}\ ,
    \label{eq-neq-b2}
  \end{align}
\end{subequations}
hence,
\begin{equation}
  \label{eq-neq-bg}
    \| \mbs{u}^h_{\eta}\|_{[H^1(\Omega)]^d} \lesssim
                                    \max(1,\eta^{-1}) \| \ell^h\|_{[H^{-1}(\Omega)]^d} .
\end{equation}
\end{theorem}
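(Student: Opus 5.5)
The plan is to combine Lemma~\ref{lem-prob-split} with linearity and then estimate the two pieces separately, each with its own tool. First, by Lemma~\ref{lem-prob-split}, applied on $\Omega_h$ with the forms $a^h_\eta$, $\ell^h$ and the centering integrals evaluated over $\Omega_h$, the solution of \eqref{eq-discrete-h} splits as $\mbs{u}^h_\eta=\mbs{u}^h_{\eta,0}+\mbs{u}^h_{\eta,\perp}$. The two components satisfy $a^h_\eta(\mbs{u}^h_{\eta,0},\mbs{w})=\ell^h_0(\mbs{w})$ and $a^h_\eta(\mbs{u}^h_{\eta,\perp},\mbs{w})=\ell^h_\perp(\mbs{w})$. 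The first is centered and the second lies in $\mathcal{R}$. Uniqueness of the split follows from the orthogonal direct sum of Lemma~\ref{lem-brackets}. The argument of that lemma only uses \eqref{eq-bilin-approx-ker}, the coercivity \eqref{eq-hhhi} on centered fields, and the fact that $\ell^h_\perp$ vanishes on centered fields. It therefore carries over verbatim to the discrete forms.

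For \eqref{eq-neq-b1}, I would repeat the proof of Theorem~\ref{theo-reg}. Test with $\mbs{w}=\mbs{u}^h_{\eta,0}$, which is centered, and use \eqref{eq-hhhi}:
\begin{equation*}
\| \mbs{u}^h_{\eta,0}\|^2_{[H^1(\Omega)]^d}\lesssim a^h_\eta(\mbs{u}^h_{\eta,0},\mbs{u}^h_{\eta,0})=\ell^h_0(\mbs{u}^h_{\eta,0})\le \|\ell^h_0\|_{[H^{-1}(\Omega)]^d}\|\mbs{u}^h_{\eta,0}\|_{[H^1(\Omega)]^d},
\end{equation*}
and divide by $\|\mbs{u}^h_{\eta,0}\|$. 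The resulting bound is uniform in $\eta$.

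For \eqref{eq-neq-b2}, set $\mbs{\rho}=\mbs{u}^h_{\eta,\perp}\in\mathcal{R}$. By \eqref{eq-bilin-approx-ker} we have $a^h(\mbs{\rho},\cdot)=0$, so testing with $\mbs{w}=\mbs{\rho}$ gives $\eta\,\|\mbs{\rho}\|^2_{L^2(\Omega_h)}=\ell^h_\perp(\mbs{\rho})\le\|\ell^h_\perp\|_{[H^{-1}]}\|\mbs{\rho}\|_{[H^1]}$. The step I expect to need care is converting the $L^2$ norm on the left into the $H^1$ norm. Here I would use that $\mathcal{R}$ is finite dimensional (second remark after the definition of $\mathcal{S}$), so $\|\mbs{\rho}\|_{[H^1]}\lesssim\|\mbs{\rho}\|_{L^2(\Omega_h)}$ on $\mathcal{R}$. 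The equivalence constant should be uniform in $h$ because $\Omega_h\to\Omega$ by \eqref{eq-omega-error}, so the inertia quantities $|\Omega_h|$ and $\mbs{J}$ stay bounded away from degeneracy for small $h$. This yields $\|\mbs{\rho}\|_{[H^1]}\lesssim\eta^{-1}\|\ell^h_\perp\|$, which is bounded by $\max(1,\eta^{-1})\|\ell^h_\perp\|$.

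Finally, \eqref{eq-neq-bg} follows from the triangle inequality together with $\|\ell^h_0\|,\|\ell^h_\perp\|\lesssim\|\ell^h\|$. This last bound holds because $\mbs{w}\mapsto\mbs{w}_0$ and $\mbs{w}\mapsto\mbs{w}_\perp$ are bounded on $[H^1]$: the projection onto $\mathcal{R}$ is given by the bounded functionals $\langle\cdot\rangle$ and $\{\cdot\}$, and the corresponding rigid motion has $H^1$ norm controlled by $\|\mbs{w}\|_{L^2}$. Since $\max(1,\eta^{-1})\ge1$, the estimates for the two components combine directly.
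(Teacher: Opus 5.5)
Your proposal is correct. For the split and for the centered bound \eqref{eq-neq-b1} it coincides with the paper's argument: Lemma~\ref{lem-prob-split}, then coercivity on centered fields tested with $\mbs{u}^h_{\eta,0}$.

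You depart from the paper on the rigid component. The paper does not use the fact, established in Lemma~\ref{lem-prob-split}, that $\mbs{u}^h_{\eta,\perp}\in\mathcal{R}$. It treats this component as a generic non-centered field and applies Korn's second inequality, $\|\mbs{v}\|^2_{[H^1(\Omega)]^d}\lesssim \triple{\mbs{v}}^2+\|\mbs{v}\|^2_{[L^2(\Omega)]^d}\le \max(1,\eta^{-1})\,a_\eta(\mbs{v},\mbs{v})$, which is where the factor $\max(1,\eta^{-1})$ comes from. You instead use that the elastic energy of a rigid motion vanishes. Testing with $\mbs{\rho}$ then gives the exact identity $\eta\|\mbs{\rho}\|^2_{L^2(\Omega_h)}=\ell^h_\perp(\mbs{\rho})$, and you close with norm equivalence on the finite-dimensional space $\mathcal{R}$.

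Your route buys three things:
\begin{itemize}
\item It is more elementary. It needs no Korn inequality, and hence no Korn constant for $a^h_\eta$ on $\Omega_h$ uniform in $h$. The paper's argument would require such a constant when applied to the discrete forms, but does not address it.
\item The only uniformity you need is that of the $L^2(\Omega_h)$--$H^1(\Omega)$ equivalence on $\mathcal{R}$, and you state it explicitly.
\item It gives the sharper bound $\eta^{-1}\|\ell^h_\perp\|_{[H^{-1}(\Omega)]^d}$. Because the identity above is an equality, this scaling cannot be improved, whereas the paper's $\max(1,\eta^{-1})$ is not sharp for large $\eta$.
\end{itemize}
The paper's route is more generic: it would bound any solution of $a_\eta(\mbs{v},\cdot)=\ell_\perp$ without knowing that $\mbs{v}$ is rigid, at the cost of a cruder constant.

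You also justify $\|\ell^h_0\|,\|\ell^h_\perp\|\lesssim\|\ell^h\|$ through the boundedness of the projections $\mbs{w}\mapsto\mbs{w}_0,\mbs{w}_\perp$. The paper uses this tacitly when it ``combines'' \eqref{eq-neq-b1} and \eqref{eq-neq-b2} into \eqref{eq-neq-bg}.
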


\begin{proof}
  Lemma~\ref{lem-prob-split} proves that the split~\eqref{eq-neq-usplit} corresponds to the sum of
  the solutions for the equilibrated and non-equilibrated loading.

We can use the results of Section~\ref{sec-fem} to bound the norm of the centered part of the solution. In particular, Lemma~\ref{lem-KPW} applies and we obtain
\begin{equation}
  \begin{split}
  \|  \mbs{u}^h_{\eta,0} \|^2_{[H^1(\Omega)]^d}
  &\lesssim
    a(\mbs{u}^h_{\eta,0}, \mbs{u}^h_{\eta,0})
    \\
  &\lesssim
    a_{\eta}(\mbs{u}^h_{\eta,0}, \mbs{u}^h_{\eta,0})  \\
  &=
    \ell^h_0(\mbs{u}^h_{\eta,0})
    \\
  &\lesssim \|\mbs{u}^h_{\eta,0}\|_{[H^1(\Omega)]^d}\;
    \| \ell^h_0 \|_{[H^{-1}(\Omega)]^d}\ .
    \end{split}
\end{equation}
This is precisely the bound~\eqref{eq-neq-b1}. As for
$\mbs{u}^h_{\eta,\perp}$, since the loading $\ell_\perp$ is not compatible, this displacement field need not be centered and the results of Section~\ref{sec-analysis} can not be employed. However, using Korn's inequality we obtain
\begin{equation}
  \begin{split}
    \|  \mbs{u}^h_{\eta,\perp} \|^2_{[H^1(\Omega)]^d}
    &\lesssim \left( \; \triple{\mbs{u}^h_{\eta,\perp}}^2 + \|\mbs{u}^h_{\eta,\perp}\|^2_{[L^2(\Omega)]^d} \right)
    \\
    &= \frac{1}{\min(1,\eta)} \;
    \left( \; \triple{\mbs{u}^h_{\eta,\perp}}^2 + \eta\, \|\mbs{u}^h_{\eta,\perp}\|^2_{[L^2(\Omega)]^d} \right)
    \\
    &=
    {\max(1,\eta^{-1})} \;
    a_\eta(\mbs{u}^h_{\eta,\perp},\mbs{u}^h_{\eta,\perp})
    \\
    &\lesssim {\max(1,\eta^{-1})} \;
    \|  \mbs{u}^h_{\eta,\perp} \|_{[H^1(\Omega)]^d}
    \| \ell^h_\perp \|_{[H^{-1}(\Omega)]^d}\ ,
  \end{split}
\end{equation}
and bound~\eqref{eq-neq-b2} follows. The global bound~\eqref{eq-neq-bg} is obtained combining Eqs.~\eqref{eq-neq-b1} and~\eqref{eq-neq-b2}.
\end{proof}

This theorem shows that, while equilibrated loading produces displacements whose norms are uniformly
bounded in $\eta$, incompatible loads contribute to the displacement solution with a component that
can only be bounded by the size of the non-equilibrated loading, and it is inversely proportional to the regularizing parameter.

The practical consequence of Theorems~\ref{theo-fem} and~\ref{theo-neq} is that the regularized
method approximates the minimum norm solution of the pure traction problem when $h,\eta\to0$, as long as
the loading is exactly equilibrated. When the loading is slightly incompatible --- a case for which
the Neumann problem has no solution --- the regularized formulation still possesses
a solution. In this situation, however, the solution would not be centered
and the non-centered part might be large relative to the centered one if the regularizing parameter is small.

The traction problem with non-compatible loads splits into two independent
problems. In the first one, which is uniformly well-posed, compatible loads produce centered displacements; in the second one, incompatible loads give rise to non-centered
displacements. The second problem has a unique solution that depends continuously on the loading,
but not uniformly on $\eta$. Even if the non-equilibrated component of the loading goes to zero as
the mesh is refined, the method would be stable only if:
\begin{equation}
  \label{eq-ne-stable}
  \lim_{\eta,h\to0} \frac{\| \ell^h_{\perp} \|_{[H^{-1}(\Omega)]^d}}{\eta}\to 0\ .
\end{equation}
This is a rather stringent constraint on the regularizing parameter $\eta$.

To further analyze this situation, consider now the algebraic form of the finite element problem, namely,
\begin{equation}
\mbsf{K}\, \mbsf{U} = \mbsf{F}\ ,
\end{equation}
where $\mbsf{K}$ is the stiffness matrix, and $\mbsf{U}$, $\mbsf{F}$ the vectors of displacements and external forces, respectively. Theorem~\ref{theo-neq} shows that the displacement, forces, and stiffness matrix can be decomposed as in
\begin{equation}
\begin{bmatrix}
\mbsf{K}_{00} & \mbsf{0} \\
\mbsf{0} & \mbsf{K}_{\perp\perp} \\
\end{bmatrix}
\,
\begin{Bmatrix}
\mbsf{U}_{0}  \\
\mbsf{U}_{\perp}\\
\end{Bmatrix}
=
\begin{Bmatrix}
\mbsf{F}_{0}  \\
\mbsf{F}_{\perp}\\
\end{Bmatrix}\ ,
\end{equation}
where, following the same notation as before, $\mbsf{U}_0,\mbsf{F}_0$ correspond to the centered displacements and equilibrated forces, and $\mbsf{U}_\perp,\mbsf{F}_\perp$ to the remaining parts.

The condition number of $\mbsf{K}_{00}$, denoted as $\kappa(\mbsf{K}_{00})$, is similar to the one in a (regular) finite element discretization. However,
the condition number of $\mbsf{K}_{\perp\perp}$ will depend on $\eta^{-1}$. For a regular mesh of element size~$h$ we will expect \cite{ern2004wx}:
\begin{equation}
\kappa(\mbsf{K}_{00})\sim h^{-2}\ ,\qquad
\kappa(\mbsf{K}_{\perp\perp})\sim h^{-2}\,\eta^{-1}\ .
\end{equation}
If the non-centered solution $\mbsf{U}_{\perp}$ should vanish when the mesh is refined, then
\begin{equation}
  \| \mbsf{U}_{\perp} \| \leq \| \mbsf{K}_{\perp\perp} \|\; \|\mbsf{F}_{\perp} \|
  \leq h^{-2} \eta^{-1}\; \|\mbsf{F}_{\perp} \|
\end{equation}

\subsection{A projection solution for non-equilibrated loading}
\label{subs-two-step}
We explore a numerical strategy that can be used to obtain the centered part
of the solution in a pure Neumann problem with incompatible loads. An obvious way to
achieve this goal would be to project the loading onto its equilibrated
part and use the latter to solve for the centered displacement. This approach is impractical from
the numerical point of view since it requires finding a basis of the space~$\mathcal{R}$.

Instead, we propose a predictor-corrector scheme that consists in finding $\mbs{u}^{h}_p, \mbs{u}^h_\eta\in \mathcal{V}^h$ that satisfy
\begin{subequations}
  \label{eq-two-step}
  \begin{align}
    a_{\eta}^h(\mbs{u}^h_p, \mbs{w}^h) &= \ell^h(\mbs{w}^h)\ ,
    \label{eq-two-step1}
    \\
    a_{\eta}^h(\mbs{u}^h_\eta, \mbs{w}^h) &= \ell^h(\mbs{w}^h)
    - \eta\, (\mbs{u}_p^h,\mbs{w}^h)^h\ ,
    \label{eq-two-step2}
  \end{align}
\end{subequations}
for all $\mbs{w}^h\in\mathcal{V}^h$. The following result shows that indeed the function $\mbs{u}^h_\eta$ is the sought displacement field.

\begin{theorem}
Let $\ell^h:\mathcal{V}^h\to\mathbb{R}$ be a continuous linear form, not necessarily equilibrated
and $a^h_{\eta}:\mathcal{V}^h\times :\mathcal{V}^h\to \mathbb{R}$ is the symmetric bilinear form
defined in Eq.~\eqref{eq-discrete-h} that satisfies
Eqs.~\eqref{eq-bilin-approx-ker}-\eqref{eq-hhhi}.
If $\mbs{u}^h_p$ and $\mbs{u}^h_\eta$ solve Eqs.~\eqref{eq-two-step1} and~\eqref{eq-two-step2}, respectively, then  $\mbs{u}^h_\eta$ is centered and satisfies
\begin{equation}
  \label{eq-th-74}
  \| \mbs{u}_0 - \mbs{u}_\eta^h \|_{[H^1(\Omega)]^d}
  \lesssim
  h \; |\mbs{u}|_{[H^{k+1}(\Omega)]^d} +
  \sup_{\mbs{w}^h\in \mathcal{V}^h} \frac{|a_{\eta}^h(\mbs{u},\mbs{w}^h)-\ell^h(\mbs{w}^h)|}{\| \mbs{w}^h \|_{[H^1(\Omega)]^d}}
  +\eta\; \|\ell^h\|_{[H^{-1}(\Omega)]^d}\ .
\end{equation}
Thus, the predictor-corrector method~\eqref{eq-two-step} is convergent in the sense that
\begin{equation}
  \label{eq-pc-convergence}
  \lim_{h,\eta\to0} \| \mbs{u}_0 - \mbs{u}_\eta^h \|_{[H^1(\Omega)]^d} = 0\ .
\end{equation}
\end{theorem}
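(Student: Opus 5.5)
The plan is to first extract the structure of the two-step scheme algebraically, and then reduce the error estimate to a Strang-type argument on the space of centered functions, where the bounds \eqref{eq-hhhi} give coercivity uniformly in $\eta$.

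\textbf{Step 1 (centering).} Take $\mbs{w}^h=\mbs{\rho}\in\mathcal{R}\subset\mathcal{V}^h$ in \eqref{eq-two-step1} and \eqref{eq-two-step2}. By \eqref{eq-bilin-approx-ker}, $a^h(\cdot,\mbs{\rho})=0$, so the two equations become
\[
\eta(\mbs{u}^h_p,\mbs{\rho})^h=\ell^h(\mbs{\rho}),
\qquad
\eta(\mbs{u}^h_\eta,\mbs{\rho})^h=\ell^h(\mbs{\rho})-\eta(\mbs{u}^h_p,\mbs{\rho})^h .
\]
Substituting the first into the second gives $(\mbs{u}^h_\eta,\mbs{\rho})^h=0$ for every $\mbs{\rho}\in\mathcal{R}$. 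By Lemma~\ref{lem-brackets}, with centering taken over $\Omega_h$, $\mbs{u}^h_\eta$ is centered.

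\textbf{Step 2 (identify the corrected problem).} Apply Lemma~\ref{lem-prob-split} to the predictor, writing $\mbs{u}^h_p=\mbs{u}^h_{p,0}+\mbs{u}^h_{p,\perp}$. The rigid part $\mbs{u}^h_{p,\perp}\in\mathcal{R}$ satisfies $\eta(\mbs{u}^h_{p,\perp},\mbs{w}^h)^h=\ell^h_\perp(\mbs{w}^h)$. Inserting this into \eqref{eq-two-step2} cancels the incompatible load, leaving
\[
a^h_\eta(\mbs{u}^h_\eta,\mbs{w}^h)=\ell^h_0(\mbs{w}^h)-\eta(\mbs{u}^h_{p,0},\mbs{w}^h)^h .
\]
So the corrector is the regularized problem driven by the \emph{equilibrated} load $\ell^h_0$, plus an $O(\eta)$ perturbation; it is essentially one step of the iteration of Section~\ref{subs-iterative}.

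By Theorem~\ref{theo-neq}, bound \eqref{eq-neq-b1}, we have $\|\mbs{u}^h_{p,0}\|_{[H^1(\Omega)]^d}\lesssim\|\ell^h_0\|_{[H^{-1}(\Omega)]^d}$. Moreover $\|\ell^h_0\|\lesssim\|\ell^h\|$, since $\mbs{w}\mapsto\mbs{w}_0$ is an $L^2$-orthogonal projection onto a finite-codimensional subspace. The $L^2$-projection onto $\mathcal{R}$ is bounded in $H^1$ because $\mathcal{R}$ is finite dimensional. Hence the perturbation functional $\mbs{w}^h\mapsto\eta(\mbs{u}^h_{p,0},\mbs{w}^h)^h$ has dual norm $\lesssim\eta\|\ell^h\|_{[H^{-1}(\Omega)]^d}$.

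\textbf{Step 3 (error estimate).} Let $\mbs{e}=\mbs{u}^h_\eta-\mbs{v}^h$ for an arbitrary centered $\mbs{v}^h\in\mathcal{V}^h$; taking $\mbs{v}^h$ as the centered part of an interpolant of $\mbs{u}_0$ is admissible since $\mathcal{R}\subset\mathcal{V}^h$. Then $\mbs{e}$ is centered, and \eqref{eq-hhhi} gives
\[
\|\mbs{e}\|^2_{[H^1(\Omega)]^d}\lesssim a^h_\eta(\mbs{e},\mbs{e})
=\ell^h_0(\mbs{e})-\eta(\mbs{u}^h_{p,0},\mbs{e})^h-a^h_\eta(\mbs{v}^h,\mbs{e}).
\]
Next, add and subtract $a^h_\eta(\mbs{u}_0,\mbs{e})$, and use $\ell^h_0(\mbs{e})=\ell^h(\mbs{e})$ because $\mbs{e}$ is centered. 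This splits the right-hand side into three pieces:
\begin{enumerate}
\item an approximation term $a^h_\eta(\mbs{u}_0-\mbs{v}^h,\mbs{e})$, bounded via \eqref{eq-hhh} by $h^k|\mbs{u}|_{[H^{k+1}(\Omega)]^d}\,\|\mbs{e}\|$, using $|\mbs{u}_0|_{H^{k+1}}=|\mbs{u}|_{H^{k+1}}$ for $k\ge1$;
\item a consistency term $\ell^h(\mbs{e})-a^h_\eta(\mbs{u}_0,\mbs{e})$, which produces the supremum in \eqref{eq-th-74};
\item the $\eta$-terms, controlled by Step~2.
\end{enumerate}
Dividing by $\|\mbs{e}\|$ and applying the triangle inequality yields \eqref{eq-th-74}. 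The limit \eqref{eq-pc-convergence} then follows, assuming the consistency term vanishes as $h\to0$, which holds by \eqref{eq-omega-error} and standard variational-crime estimates.

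\textbf{Main obstacle.} The delicate point is the consistency term. The statement writes it with $\mbs{u}$, whereas the natural quantity is $\mbs{u}_0$. The difference is $\eta(\mbs{u}-\mbs{u}_0,\mbs{w}^h)^h$, which vanishes when $\mbs{w}^h$ is centered on $\Omega_h$. However, $\mbs{u}_0$ is centered on $\Omega$ rather than $\Omega_h$, so this mismatch must either be absorbed into the consistency term or bounded by $|\Omega-\Omega_h|$. I would handle it by testing only with centered $\mbs{e}$, where the rigid component of $\mbs{u}$ drops out of both $a^h$ and the $\eta$-product. Any residual discrepancy in the centering geometry I would treat as part of the consistency error.
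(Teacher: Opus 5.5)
Your proposal is correct at the paper's level of rigor. It rests on the same core mechanism as the paper: coercivity of $a^h_\eta$ on centered functions via \eqref{eq-hhhi}, plus a Strang-type splitting. The difference is the reference function. The paper inserts the continuous regularized solution $\mbs{u}_\eta$. It writes $\|\mbs{u}_0-\mbs{u}^h_\eta\|\le\|\mbs{u}_0-\mbs{u}_\eta\|+\|\mbs{u}_\eta-\mbs{u}^h_\eta\|$ and bounds the first term by Theorem~\ref{thm-main}. It then estimates the second through the $L^2$ projection $\Pi^h\mbs{u}_\eta$, with $\mbs{u}^h_p$ silently replaced by $\mbs{u}^h_{p,0}$; this is legitimate because the discrete error is centered. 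You never introduce $\mbs{u}_\eta$. Instead, Lemma~\ref{lem-prob-split} shows that the corrector is the regularized problem driven by $\ell^h_0$ minus the functional $\eta(\mbs{u}^h_{p,0},\cdot)^h$, an identity valid for all test functions. You then compare $\mbs{u}^h_\eta$ directly with a centered interpolant of $\mbs{u}_0$.

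The paper's route is modular, separating regularization error from discretization error as in Theorem~\ref{theo-fem}. However, it naturally produces the consistency functional and the interpolation seminorm evaluated at $\mbs{u}_\eta$. Reaching \eqref{eq-th-74}, which is written in terms of $\mbs{u}$, therefore needs a further comparison that the paper leaves implicit (Theorem~\ref{thm-main} plus a regularity bound such as \eqref{eq-regularity}). Your route lands exactly on the quantities in \eqref{eq-th-74}. This is because $|\mbs{u}_0|_{[H^{k+1}(\Omega)]^d}=|\mbs{u}|_{[H^{k+1}(\Omega)]^d}$, and $a^h_\eta(\mbs{u},\mbs{e})=a^h_\eta(\mbs{u}_0,\mbs{e})$ for centered $\mbs{e}$. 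It also does not need Theorem~\ref{thm-main}, since the $\eta\,\mbs{u}_0$ contribution sits inside the supremum. Your Step~1 is also an actual proof of the centering claim, which the paper only asserts.

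One correction to your ``main obstacle''. The $\Omega$ versus $\Omega_h$ issue does not affect the consistency term at all. The reason is that $\mbs{u}-\mbs{u}_0\in\mathcal{R}$ is $(\cdot,\cdot)^h$-orthogonal to every $\Omega_h$-centered $\mbs{e}$, wherever $\mbs{u}_0$ happens to be centered. It enters instead in your final triangle inequality. Since $\mbs{u}^h_\eta$ is centered on $\Omega_h$, the term $\|\mbs{u}_0-\mbs{v}^h\|$ contains the rigid component of $\mbs{u}_0$ computed with moments over $\Omega_h$. That component is zero over $\Omega$ but generally not over $\Omega_h$, and the supremum in \eqref{eq-th-74} cannot control it. It must be bounded separately in terms of the geometric error $|\Omega-\Omega_h|$. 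It vanishes as $h\to0$, so \eqref{eq-pc-convergence} is unaffected. The paper's proof ignores this point by tacitly identifying $\Omega_h$ with $\Omega$.
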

\begin{proof}
The fact that $\mbs{u}_\eta^h$ is centered follows from Eq.~\eqref{eq-bilin-approx-ker}, irrespective of
whether $\ell^h$ is equilibrated or not. Then, we can use Lemma~\ref{lem-KPW} to obtain
\begin{equation}
  \label{eq-pf74a}
  \| \mbs{u}_0 - \mbs{u}_\eta^h \|_{[H^1(\Omega)]^d}
  \le
  \| \mbs{u}_0 - \mbs{u}_\eta \|_{[H^1(\Omega)]^d}
  +
  \| \mbs{u}_\eta - \mbs{u}_\eta^h \|_{[H^1(\Omega)]^d}\ .
\end{equation}

To calculate the second error, let us define
\begin{equation}
  \label{eq-errors}
  \mbs{e} \defined \mbs{u}_{\eta} - \mbs{u}_{\eta}^h\ ,
  \qquad
  \mbs{e}_i \defined \mbs{u}_{\eta} - \Pi^h \mbs{u}_{\eta}\ ,
  \qquad
  \mbs{e}_h \defined \mbs{u}_{\eta}^h - \Pi^h \mbs{u}_{\eta}\ .
\end{equation}
where $\Pi^h\mbs{u}_{\eta}\in \mathcal{V}^h$ is the $L^2$ projection
of the function $\mbs{u}_{\eta}$ onto $\mathcal{V}^h$. Then, we can split the error in the finite
element solution of the regularized problem into two contributions, namely,
\begin{equation}
  \label{eq-pppt}
    \| \mbs{e} \|_{[H^1(\Omega)]^d}
    \le
    \| \mbs{e}_i\|_{[H^1(\Omega)]^d}
    +
    \| \mbs{e}_h \|_{[H^1(\Omega)]^d}\ .
\end{equation}
The first term is the interpolation error
\begin{equation}
  \label{eq-ppp0}
   \| \mbs{e}_i \|_{[H^1(\Omega)]^d}
   \lesssim
   h^k \,
   |\mbs{u}_{\eta} |_{[H^{k+1}(\Omega)]^d}\ .
\end{equation}
For the second term, we write
\begin{equation}
  \label{eq-ppp1}
    \| \mbs{e}_h \|_{[H^1(\Omega)]^d}^2
      \lesssim (\triple{\mbs{e}^h}^h_{\eta})^{2}
      = a_\eta^h(\mbs{e}^h,\mbs{e}^h)
     = a_{\eta}^h(\mbs{e},\mbs{e}^h) + a_{\eta}^h(-\mbs{e}_i,\mbs{e}^h)
\end{equation}
Moreover,
\begin{equation}
  \label{eq-ppp2}
  a_{\eta}^h(\mbs{e},\mbs{e}^h) =
  a_{\eta}^{h}(\mbs{u}_{\eta}, \mbs{e}^h) - a_{\eta}^h(\mbs{u}^h_{\eta}, \mbs{e}^h) =
  a_{\eta}^{h}(\mbs{u}_{\eta}, \mbs{e}^h) - \ell^h(\mbs{e}^h) + \eta\,(\mbs{u}_{p,0}^h,\mbs{e}^h)
\end{equation}
and
\begin{equation}
  \label{eq-ppp3}
  a_{\eta}^h(-\mbs{e}_i,\mbs{e}^h)
  \lesssim
  \triple{\mbs{e}_i}_{\eta}^h\, \triple{\mbs{e}_h}_{\eta}^h\ .
\end{equation}
Combining Eqs.~\eqref{eq-ppp1} to~\eqref{eq-ppp3} we obtain that the norm of the error $\mbs{e}_h$
can be bound as
\begin{equation}
  \label{eq-ppp4}
  \| \mbs{e}_h \|_{[H^1(\Omega)]^d} \lesssim
  \sup_{\mbs{w}_h\in \mathcal{V}_h} \frac{|a_{\eta}^h(\mbs{u}_{\eta}, \mbs{w}_h) - \ell(\mbs{w}^h)}{\| \mbs{w}_h \|_{[H^1(\Omega)]^d}}
  + \eta\,\| \mbs{u}_{p,0}^h \|_{[L^2(\Omega)]^d} + \triple{\mbs{e}_i}_{\eta}^h\ .
\end{equation}
Combining this bound with Eqs~\eqref{eq-pf74a} and \eqref{eq-ppp0}, the main result is proven. Note that
when the approximate bilinear form coincides with exact one the error bound simplifies to
\begin{equation}
  \label{eq-}
  \| \mbs{u}_0 - \mbs{u}_\eta^h \|_{[H^1(\Omega)]^d}
  \lesssim
  h^k \; |\mbs{u}|_{[H^{k+1}(\Omega)]^d} +\eta\; \|\ell^h\|_{[H^{-1}(\Omega)]^d}\ .
\end{equation}
The previous theorem shows that, as long as the approximate bilinear form vanishes for infinitesimal
rigid body motions, the error in the two-step solution is controlled by the interpolation error, the
error of the bilinear form, and an error that is proportional to the regularizing parameter~$\eta$.
This result should be compared with bound~\eqref{eq-neq-bg} that shows that if the two-step method
is not used, the size of the finite element solution grows with $\eta^{-1}$.

\end{proof}

\section{Numerical examples}
\label{sec-examples}
We show four examples of pure traction problems whose solutions are approximated with the method
proposed in this work. All equations are assumed to be nondimensionalized.

\subsection{Deformable sphere under a central force field}
\label{subs-sphere}
In the first example, we study the deformation of an elastic sphere of radius $R$ under a central force of the form
\begin{equation*}
  \label{eq-central-load}
\mbs{f}(r,\theta,\phi) = - C \frac{r}{R}\, \mbs{e}_r\ ,
\end{equation*}
where $(r,\theta,\phi)$ are spherical coordinates with origin at the center of the sphere, $C$ is a known constant, and $\mbs{e}_r$ is the unit vector in the radial direction at every point. The solution to this problem is known in closed form and can be found, for example, in the classical book by Love \cite[pg. 142-143]{love1927wt}. There, it is shown that, if expressed in spherical coordinates, the radial component of the displacement field is the only one not vanishing, and has value:
\begin{equation}
u_r = - \frac{1}{10} \frac{C\,R\,r}{\lambda+2\mu}
\left(
\frac{5\lambda+6\mu}{3\lambda+2\mu} -
\frac{r^2}{R^2}
 \right)\ .
\end{equation}
Again in spherical coordinates, the only non-zero components of the strain tensor are:
\begin{equation}
\begin{split}
    \varepsilon_{rr} &=
    \frac{3 C r^2-C R^2}{10 \lambda  R+20 \mu  R}-\frac{C R}{5 (3 \lambda +2 \mu )}
    \ ,\\
    \varepsilon_{\theta\theta} &= \varepsilon_{\phi\phi} =
    -\frac{C R}{10 (\lambda +2 \mu )} \left(-\frac{4 \lambda }{3 \lambda +2 \mu }-\frac{r^2}{R^2}+3\right)\ .
\end{split}
\end{equation}

\begin{figure}[t]
  \centering
  \includegraphics[width=0.35\textwidth]{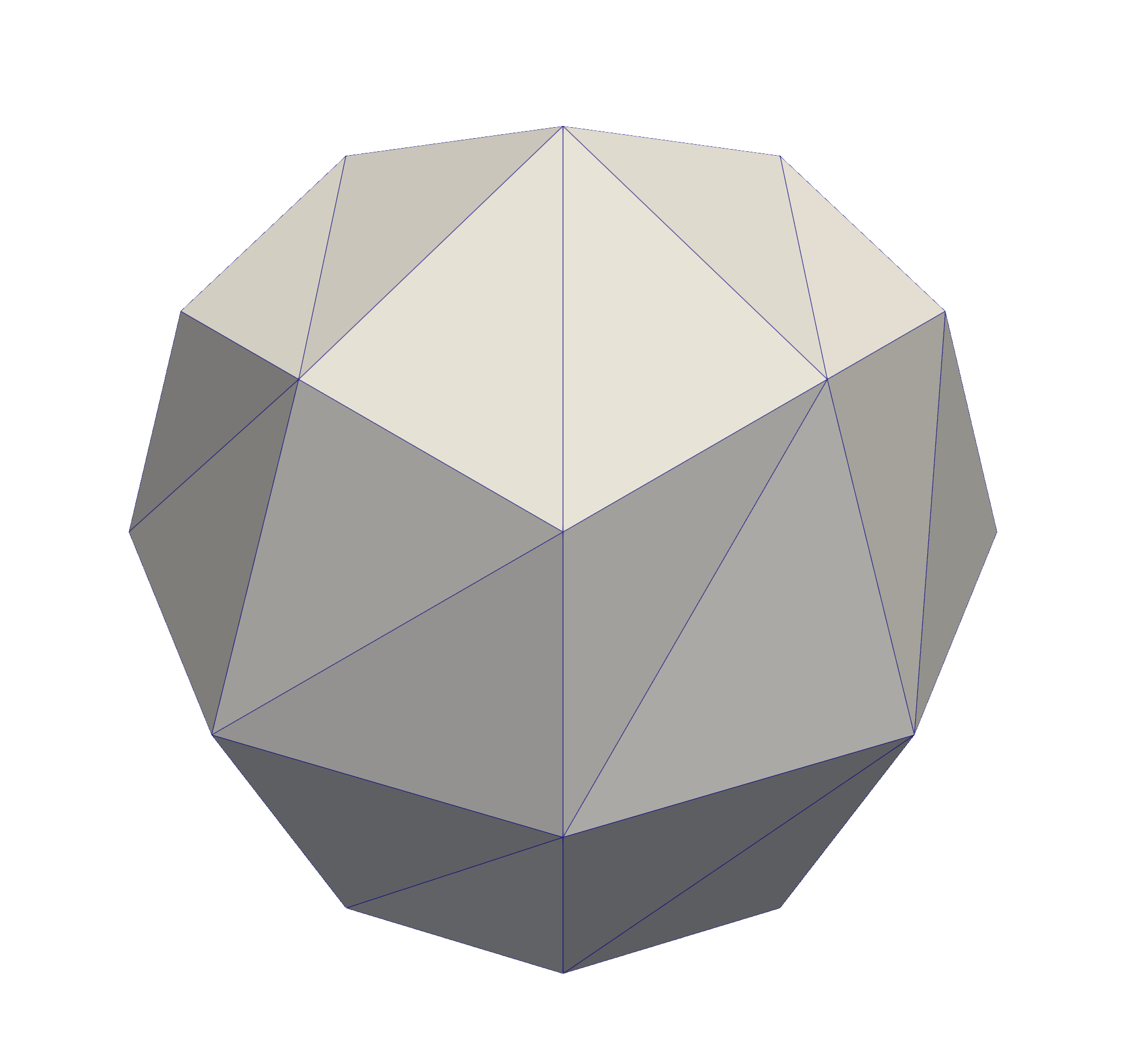}%
  \includegraphics[width=0.35\textwidth]{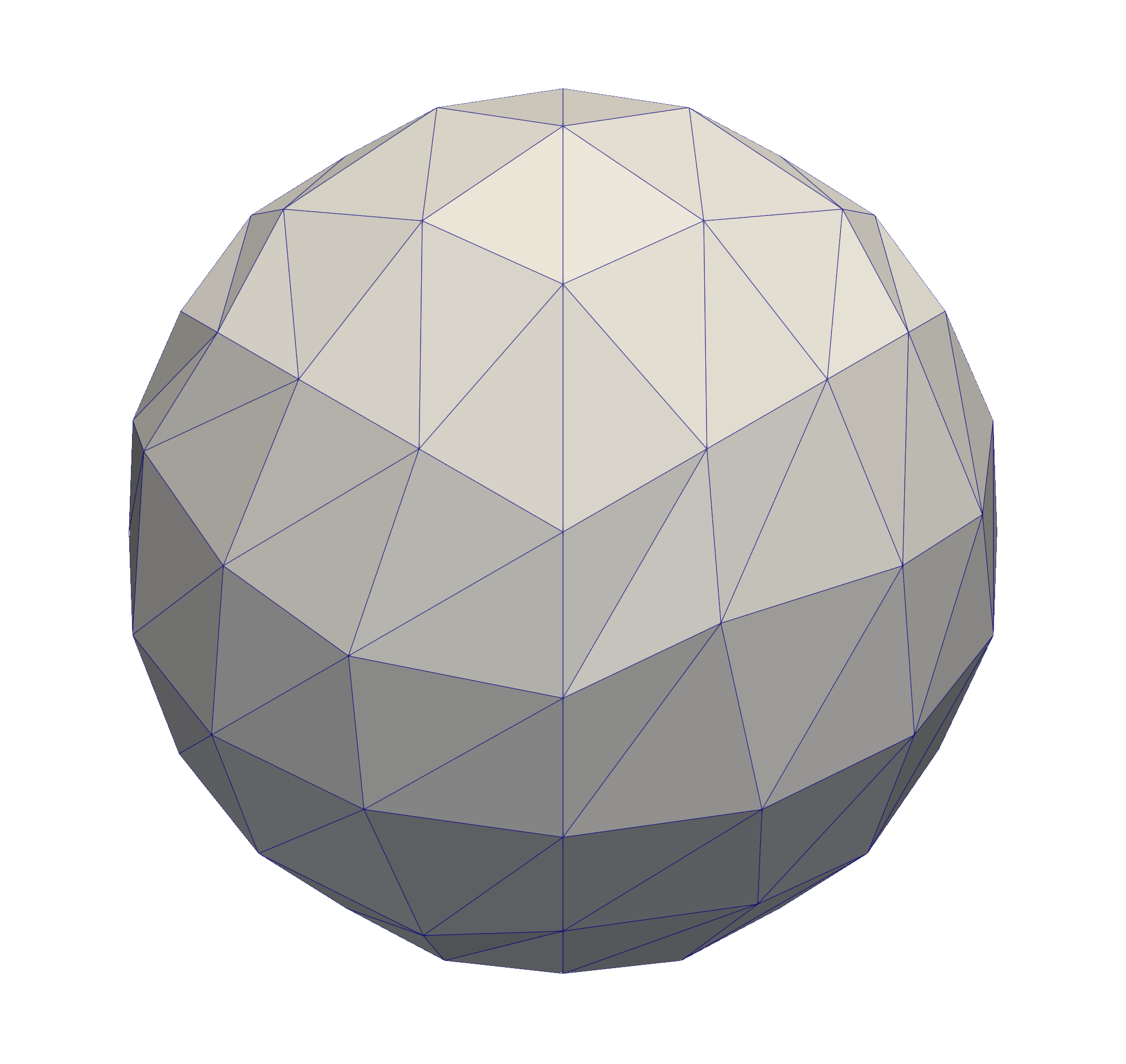}\\
  \includegraphics[width=0.35\textwidth]{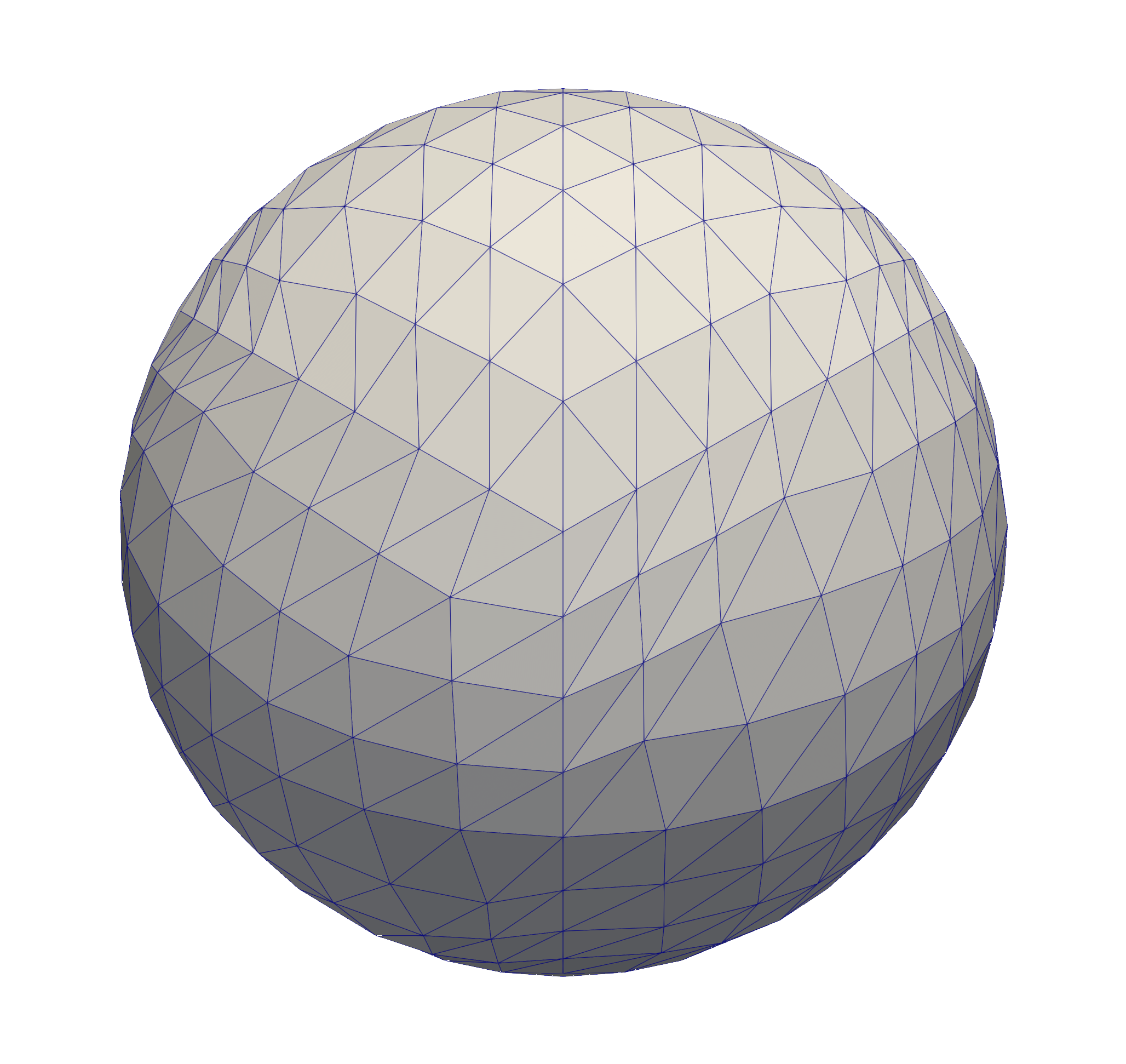}%
  \includegraphics[width=0.35\textwidth]{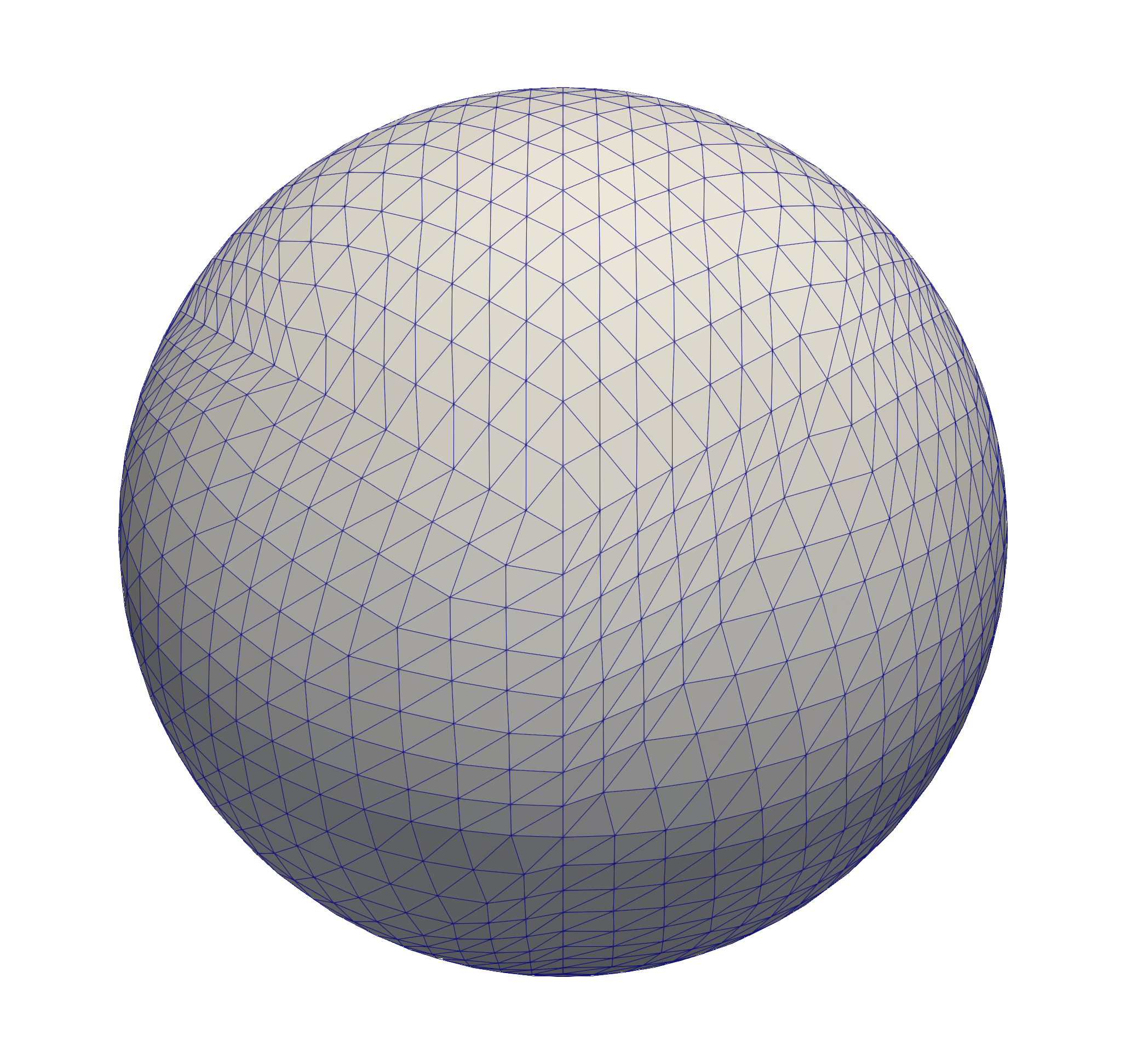}%
  \caption{Finite element models for the elastic sphere problem.
  Number of elements, from top to bottom and left to right: 
  $96\cdot8^j, j=0,\ldots,3$)}.
  \label{fig-sphere-mesh}
\end{figure}

\begin{figure}[t]
  \centering
  \includegraphics[width=0.8\textwidth]{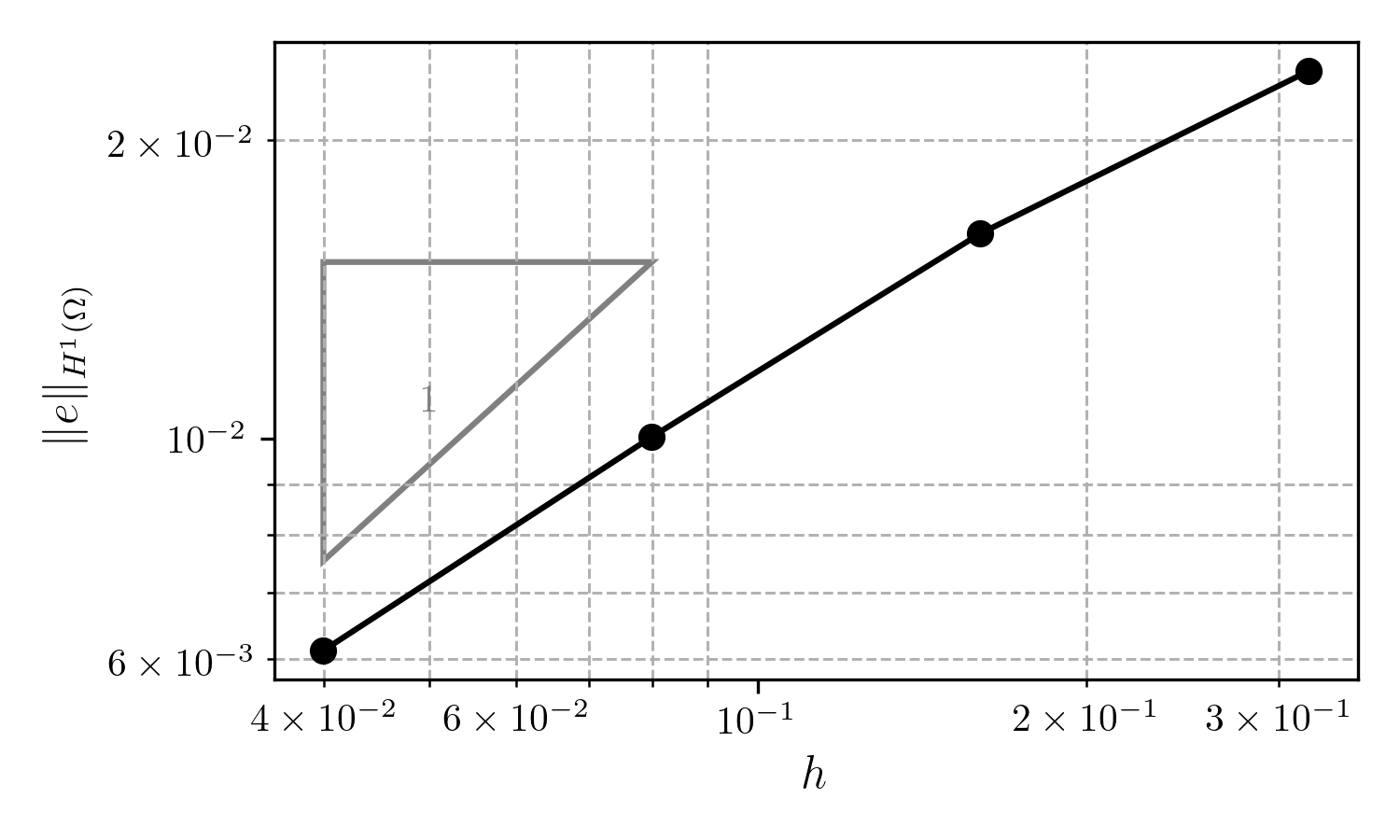}
  \includegraphics[width=0.8\textwidth]{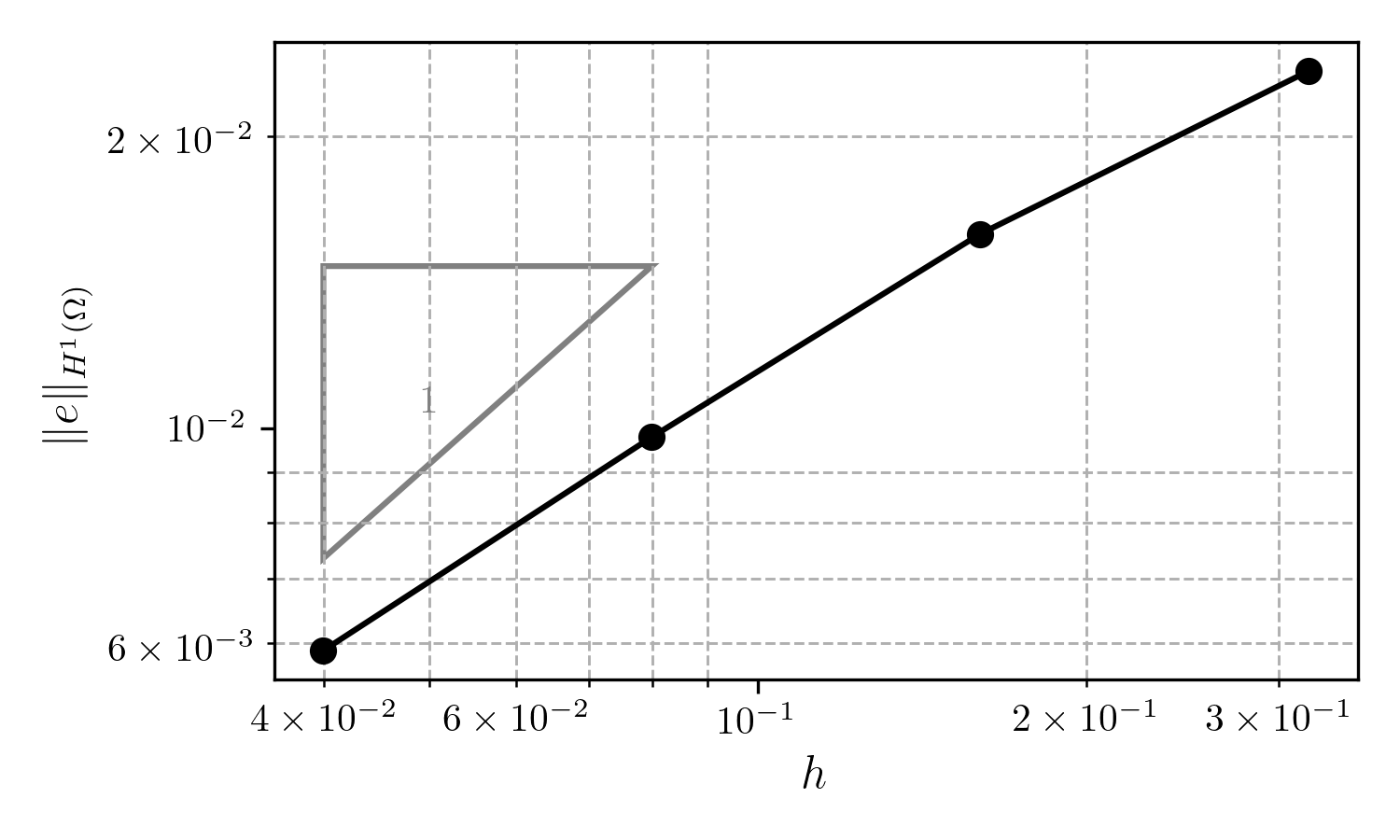}
   \caption{$H^1$ error in the numerical solution of the elastic sphere problem. Top: solutions with
    nodal constraints; bottom: solutions obtained with the regularized method.}
  \label{fig-sphere-errors}
\end{figure}

In our simulations, we select $R=1/2, C=2$ and the Lam\'e parameters correspond to a material with
Young's modulus $E=1$ and Poisson's ratio $\nu=0.3$. First, we solve this Neumann problem using
a standard finite element formulation and we use constraints to preclude rigid body motions. For
that, we introduce a system of Cartesian coordinates with its
center coinciding with the center of the sphere. Then, we create tetrahedral meshes
with an increasing number of elements ($96\cdot8^j, j=0,\ldots,3$) (see
Figure~\ref{fig-sphere-mesh}). Each of these meshes has a node at the origin and also at the intersection of the Cartesian coordinate axes with the
surface of the sphere. We hold the three displacements of the center node constraint all
translations. To preclude (infinitesimal) rotations, we hold the displacements on the $x$ and $y$
direction of the node at $(x,y,z)=(0,0,1/2)$; similarly, we constraint the displacement in the $x$
direction of the node with coordinates $(x,y,z)=(0,1/2,0)$.

Instead of constraining nodal degrees of freedom, the solution to this traction problem can be
obtained using the regularized formulation described in Section~\ref{sec-fem}, employing a
regularization parameter $\eta \propto h^{1.2}$.

Both numerical methods converge with optimal order to the exact solution of the elastic sphere as
verified in Figure~\ref{fig-sphere-errors}, the differences between
the errors being almost negligible. As claimed before, the advantage of the newly proposed
formulation lies, in this case, on the simplicity of the solution, which does not demand boundary conditions at all.

\begin{figure}[t]
  \centering
  \includegraphics[width=0.8\textwidth]{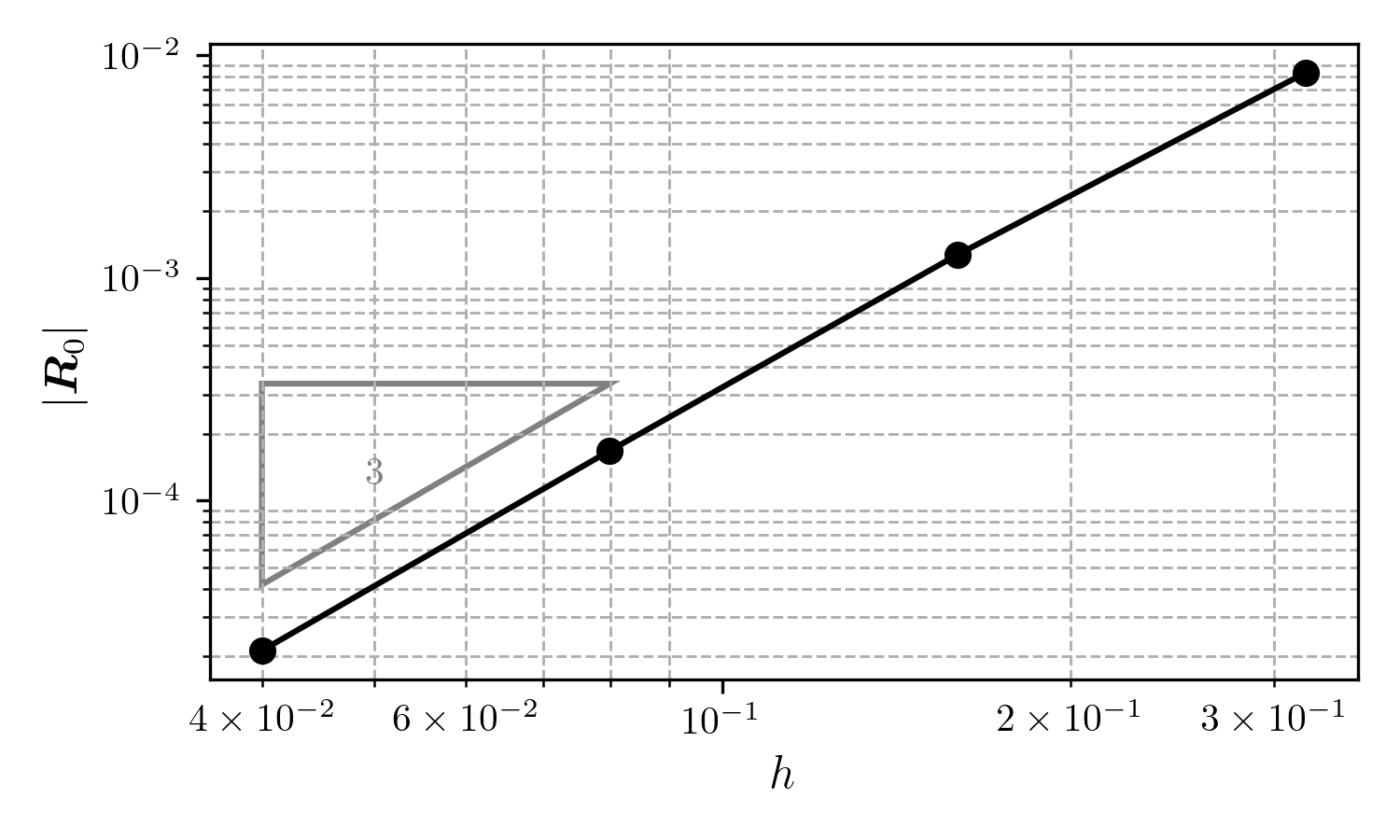}
  \caption{Elastic sphere with a perturbed central load. Norm of the nodal reactions in the
    solution obtained with constrained degrees of freedom.}
  \label{fig-windy-reac}
\end{figure}

\begin{figure}[p]
  \centering
  \includegraphics[width=0.8\textwidth]{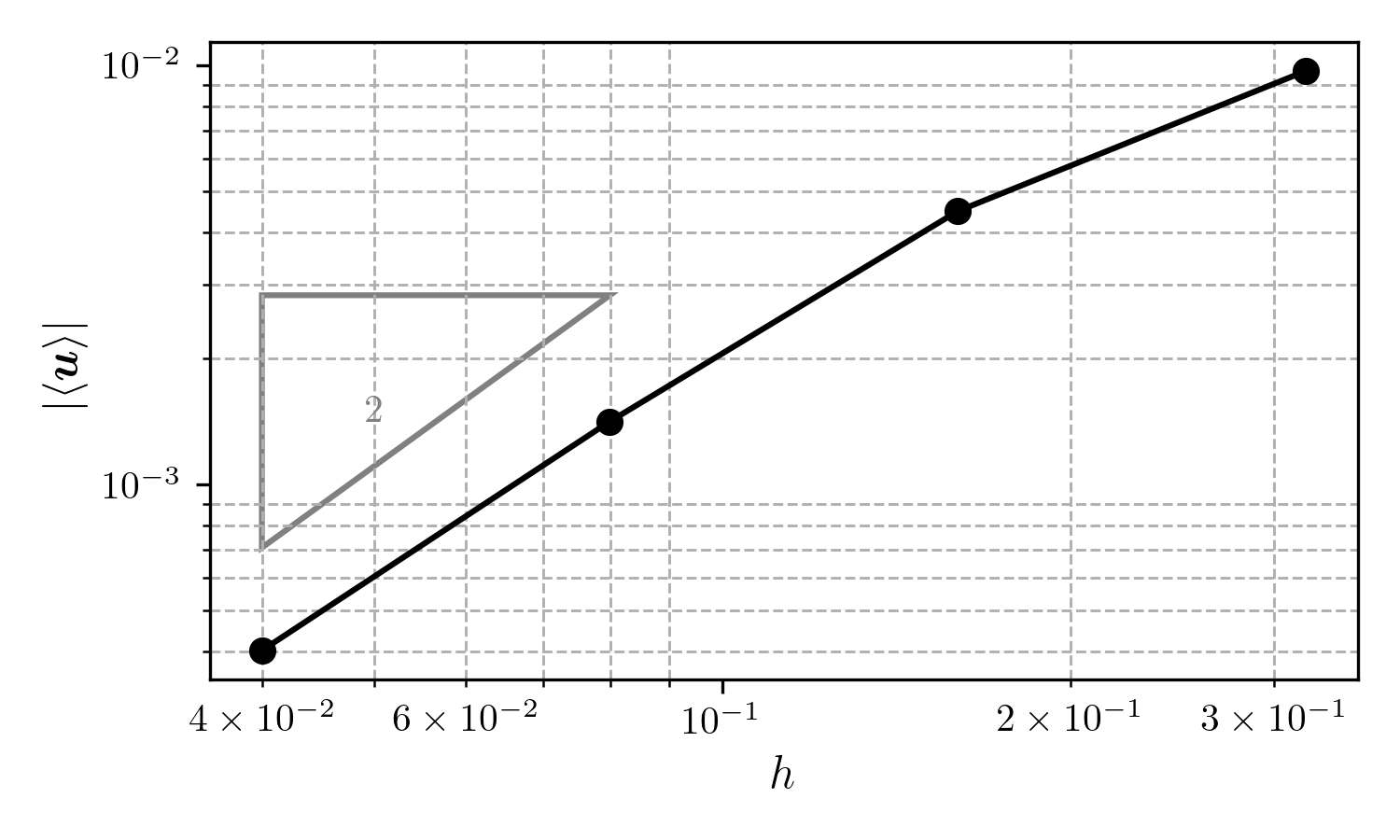}\\
  \includegraphics[width=0.8\textwidth]{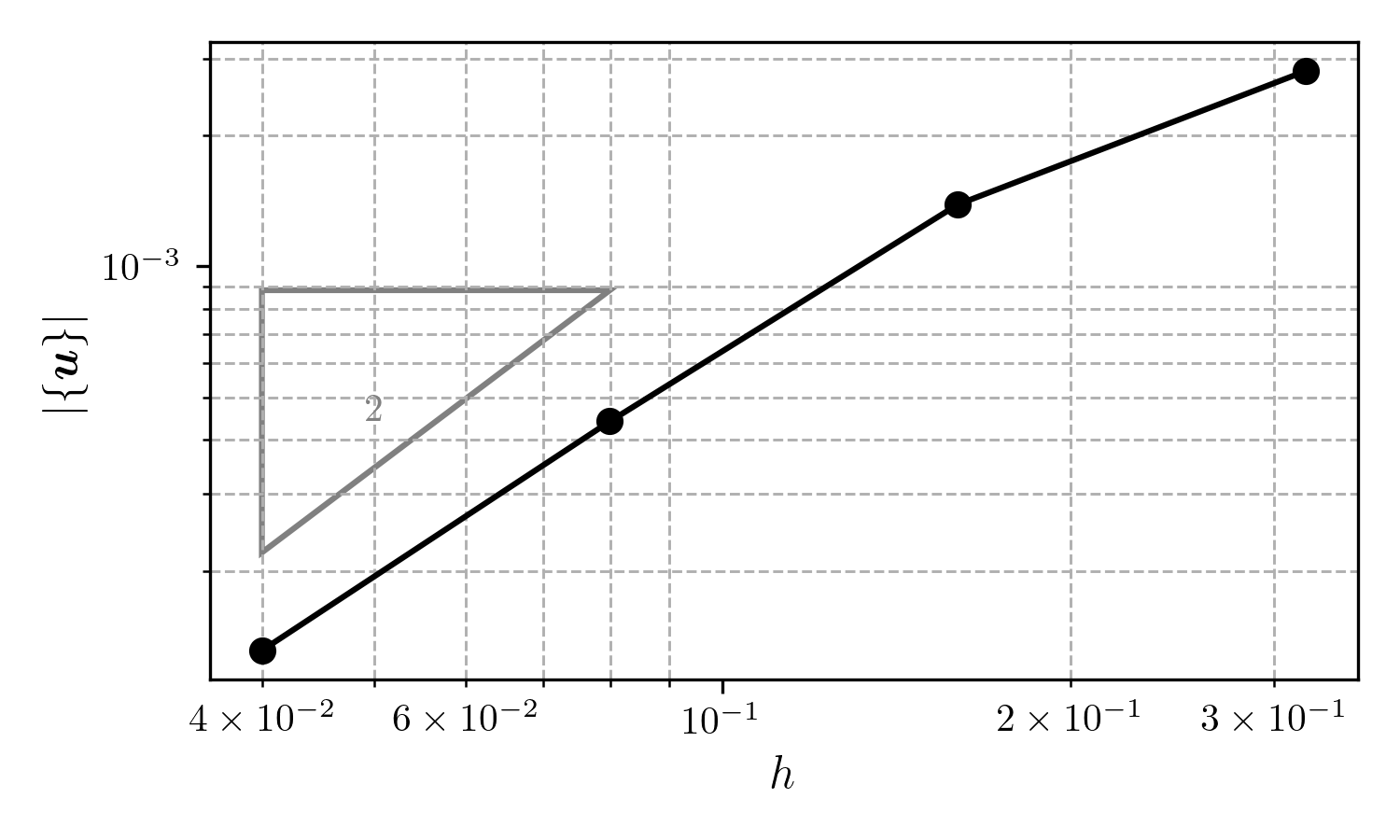}\\
  \includegraphics[width=0.8\textwidth]{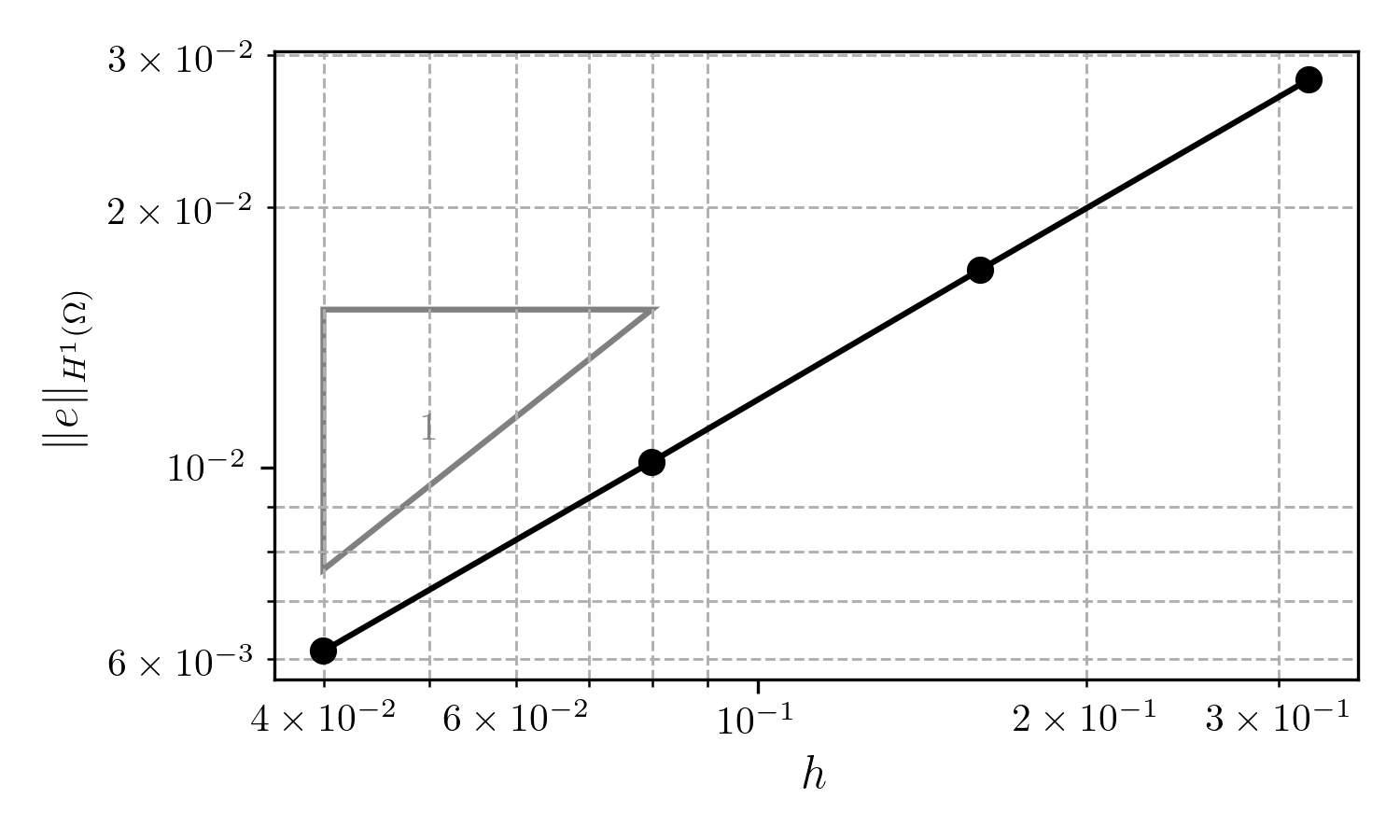}
  \caption{Elastic sphere with a perturbed central load. Solution obtained with constrained
    degrees of freedom. Top: norm of mean displacement vs. mesh size; center: 
    norm of displacement moment vs. mesh size; bottom: $H^1$ error in the solution.}
  \label{fig-windy-3}
\end{figure}

The central load~\eqref{eq-central-load} is equilibrated and the reactions in the constrained nodes of
the first solution vanish. We explore next the effects on the numerical solutions of a
(non-equilibrated) perturbation on the loading. Specifically, we consider the effect on the deformable sphere of a volume load
\begin{equation}
  \label{eq-sphere-l2}
  \tilde{\mbs{f}} = \frac{h}{R}\, \mbs{i}\ ,
\end{equation}
added to the central load~\eqref{eq-central-load}, where $\mbs{i}$ is the unit vector of the Cartesian basis in the
direction of the $x$ axis. The total load $\mbs{f}+\tilde{\mbs{f}}$ is not centered and we expect a
reaction of value $|\tilde{\mbs{f}}|\frac{4}{3}\pi R^3\sim h^3$ on the constrained nodes, which is
confirmed in the results depicted in Figure~\ref{fig-windy-reac}. Also, as a consequence of the
unequilibrated loading, the numerical solutions obtained with this method fail to be centered as
illustrated Figure~\ref{fig-windy-3}. There, it can be verified that
the numerical method is able to calculate a solution for each mesh size, but all of them naturally
fail to be centered. Notwithstanding these limitations, the constrained method yields optimally-convergent
solutions, as can be observed in Figure~\ref{fig-windy-3}.

\begin{figure}[ht]
  \centering
  \includegraphics[width=0.8\textwidth]{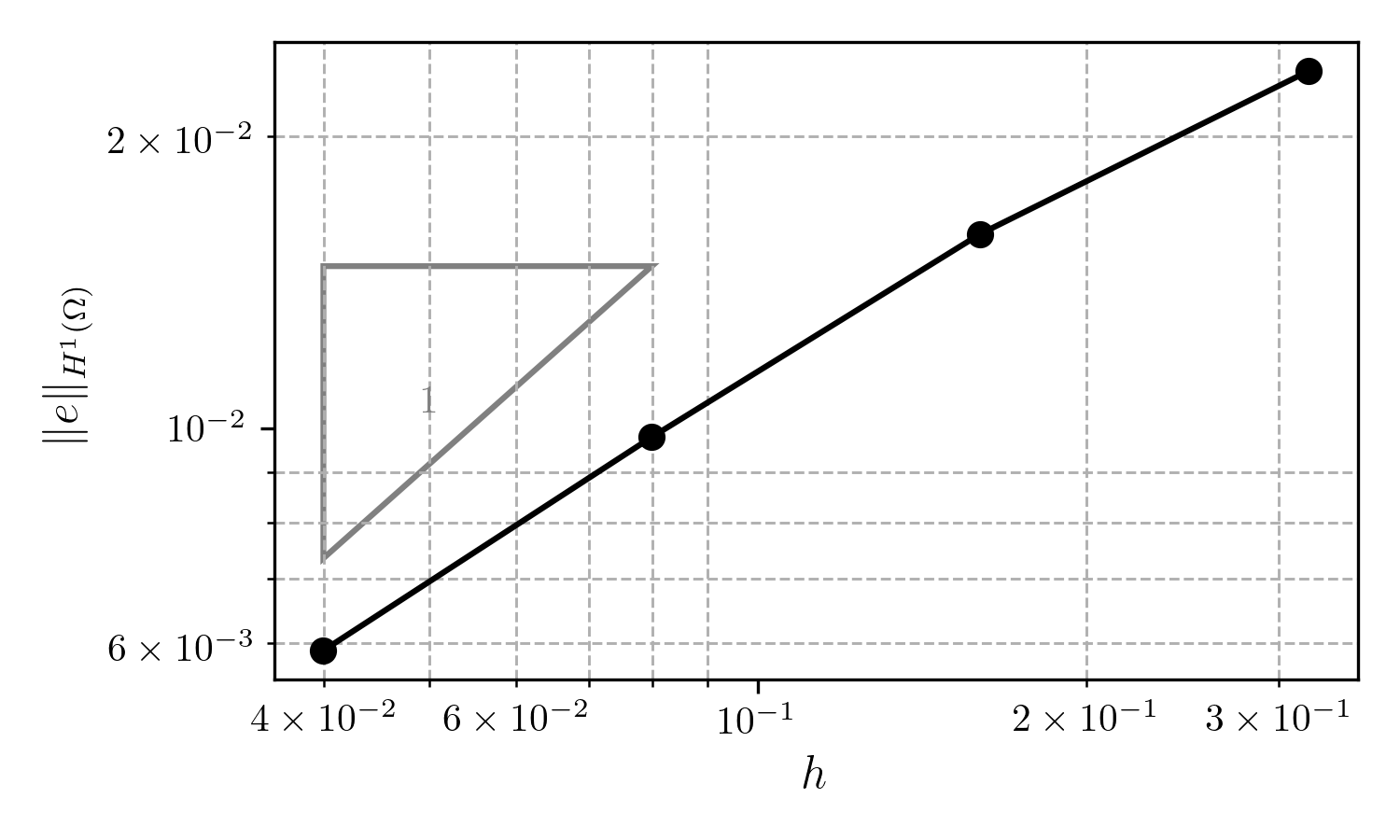}
  \caption{Elastic sphere with a perturbed central load. $H^1$ error in the solution obtained with the two-step method.}
  \label{fig-windy-error2}
\end{figure}

The method introduced in Section~\ref{subs-two-step} can deal with incompatible loads. Solutions
calculated with this two-step method are exactly centered, without requiring any nodal constraint,
and optimally convergent as seen in Figure~\ref{fig-windy-error2}.

The results shown in this section confirm that the methods introduced in Sections~\ref{sec-fem}
and~\ref{subs-two-step} can deal, respectively, with equilibrated and non-equilibrated loading
without using any nodal constraint. In both cases, the convergence of the methods is confirmed to be
optimal.

\begin{figure}[t]
  \centering
  \includegraphics[width=0.8\textwidth]{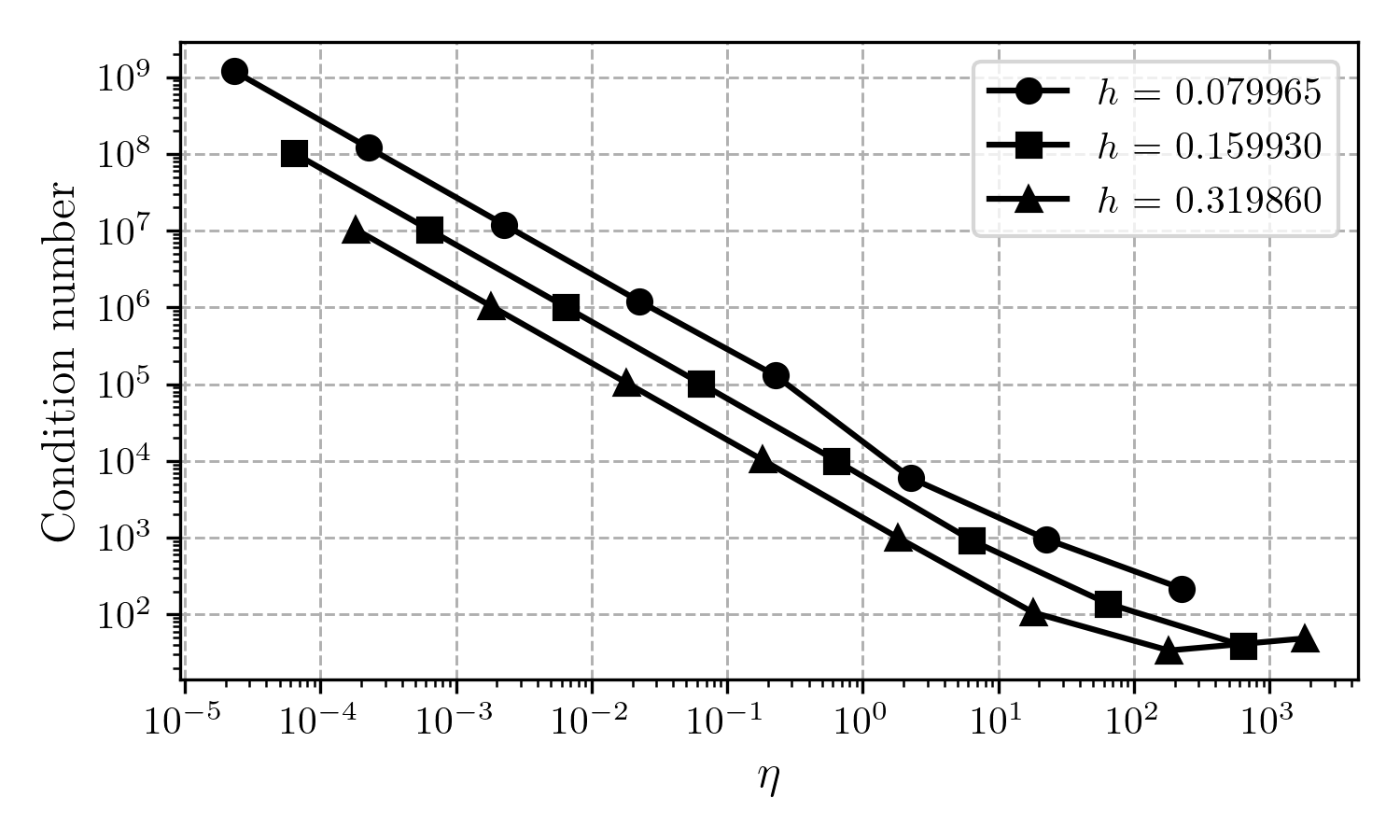}
  \caption{Condition number of the stiffness matrix as a function of the regularization parameter for the regularized solution of the deformable sphere problem.}
  \label{fig-sphere-condition}
\end{figure}

The stiffness matrix of a pure Neumann problem is singular. When the regularizing method introduced
in Section~\ref{sec-fem} is employed, this matrix becomes regular, but its condition number deteriorates with $\eta\to0$. Figure~\ref{fig-sphere-condition} depicts the condition number of the
stiffness matrix employed in the solutions studied in this section. For values of $\eta$ of the
order of~1, the
matrix is well-conditioned, yet the perturbation introduced into the Neumann problem cannot be
considered to be small and thus the solution might be far from the exact one. For $\eta \ll 1$, the effects of the regularization become less noticeable, yet the stiffness matrix has a large condition number.

From the theoretical point of view, any $\eta>0$ makes the Neumann problem well posed. Indeed, the
regularization introduces a (small) positive eigenvalue in the spectrum of the elasticity operator, enough to
render it coercive. Similarly, in the discretized Neumann problem, any positive $\eta$
introduces a positive eigenvalue in the stiffness matrix which makes it invertible, if arbitrary
precision could be employed. In practice, however, finite precision floating point calculations
render the discrete solution useless when the stiffness is ill-conditioned, both for direct solvers
as well as for iterative ones.

If the loading is equilibrated, the method of Section~\ref{subs-iterative} can be used, providing
the exact pure traction solution even when a relatively large regularized parameter is employed. As an
example, we solve the Neumann problem of the deformable sphere with $\eta=1$ and the iterative
method introduced in Section~\ref{subs-iterative}.

The Neumann solution is calculated with the finite element
discretizations depicted in Figure~\ref{fig-sphere-mesh}, with the same material properties employed before and not a regularization parameter $\eta=1$. For every discretization, the iterative scheme~\eqref{eq-iterative} is employed. 

\begin{table}[ht]
    \centering
    \begin{tabular}{l l l l l }
    \toprule
     Iteration & $j=0$ & $j=1$ & $j=2$ & $j=3$ \\
     \midrule     
$1$ & $4.4148\cdot 10^{-2}$ & $2.8104\cdot 10^{-2}$ & $1.2475\cdot 10^{-2}$ & $4.8739\cdot 10^{-3}$\\
$2$ & $4.3828\cdot 10^{-4}$ & $3.3498\cdot 10^{-4}$ & $1.5544\cdot 10^{-4}$ & $6.1022\cdot 10^{-5}$\\
$3$ & $4.2696\cdot 10^{-6}$ & $3.9575\cdot 10^{-6}$ & $1.9449\cdot 10^{-6}$ & $7.7661\cdot 10^{-7}$\\
$4$ & $4.1818\cdot 10^{-8}$ & $4.7464\cdot 10^{-8}$ & $2.4436\cdot 10^{-8}$ & $9.9100\cdot 10^{-9}$\\
$5$ & $4.3745\cdot 10^{-10}$ & $6.5080\cdot 10^{-10}$ & $3.1821\cdot 10^{-10}$ & $1.2676\cdot 10^{-10}$\\
\bottomrule
    \end{tabular}
    \caption{2-norm of the residuals in the solution of the iterative scheme~\eqref{eq-iterative} with the discretizations depicted in Figure~\ref{fig-sphere-mesh}. The number of elements in each mesh is $96\cdot8^j, j=0, \ldots, 3$.}
    \label{tab-iterations}
\end{table}

\begin{figure}[ht]
  \centering
  \includegraphics[width=0.8\textwidth]{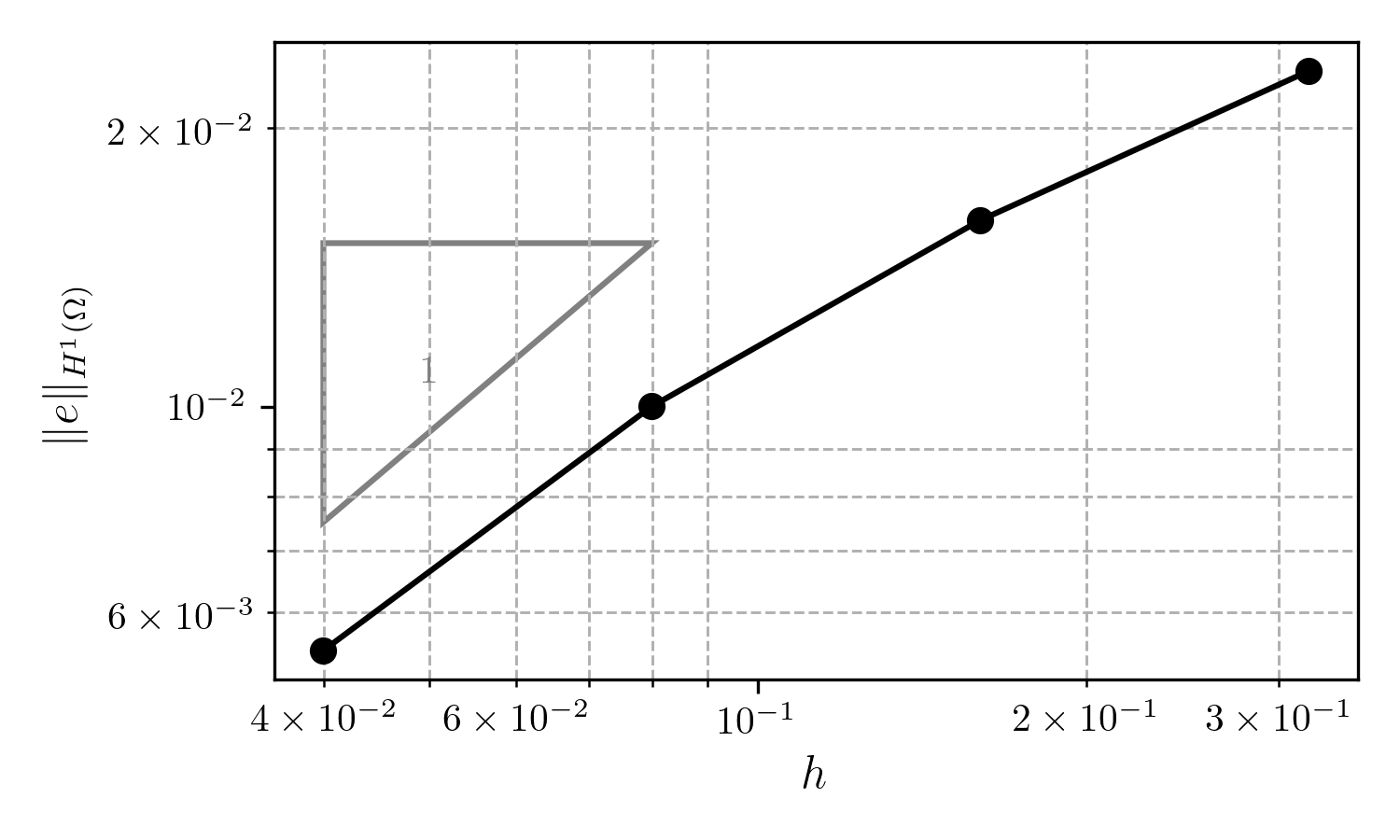}
  \caption{Elastic sphere under central load. $H^1$ norm of the errors in the solutions obtained with the iterative scheme~\eqref{eq-iterative}.}
  \label{fig-iterative-h1error}
\end{figure}

The solutions obtained with the iterative method can be compared with the exact ones. Figure~\ref{fig-iterative-h1error} shows the convergence of the solutions obtained. Notice that all of them have been obtained with a relatively large value of $\eta$ which yields, for every $h$, condition numbers of the order of $1$ (see Figure~\ref{fig-sphere-condition}). For all the meshes, moreover, the $H^1$ error is commensurate to the error in the regularized solution or the solution obtained with constrained degrees of freedom (see Figure~\ref{fig-sphere-errors}).

\subsection{A parallelepiped under wavy, equilibrated body loading}
\label{subs-para}
In the second example, we consider the solution of another simple Neumann problem with a statically equilibrated body load. To ensure that the load is exactly compatible, we start by selecting a body with the shape of a parallelepiped. Moreover, we use hexahedral elements to mesh it, exactly representing the geometry and preserving its symmetries.

\begin{figure}[ht]
  \centering
  \includegraphics[width=0.49\textwidth]{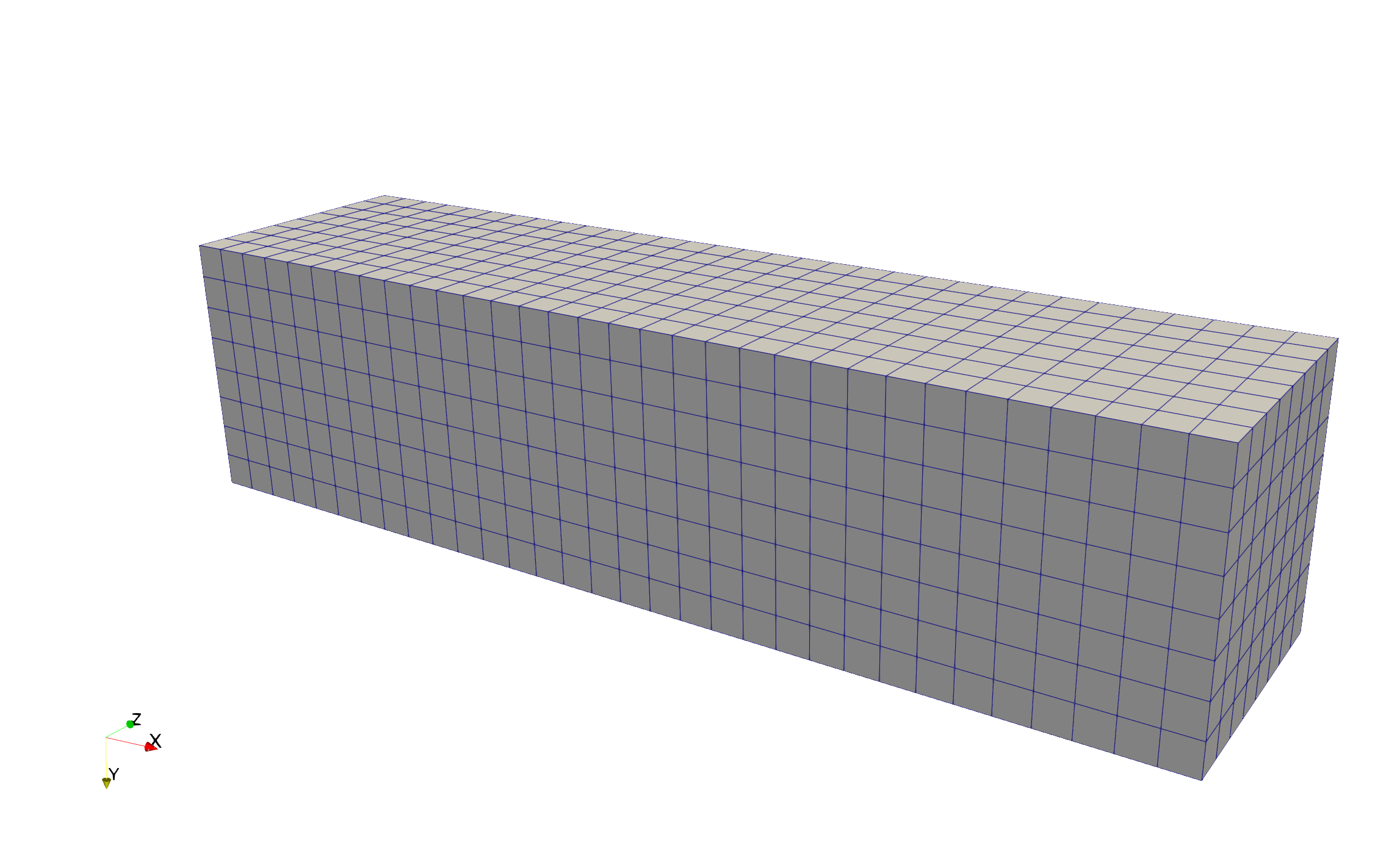}\hfill
  \includegraphics[width=0.49\textwidth]{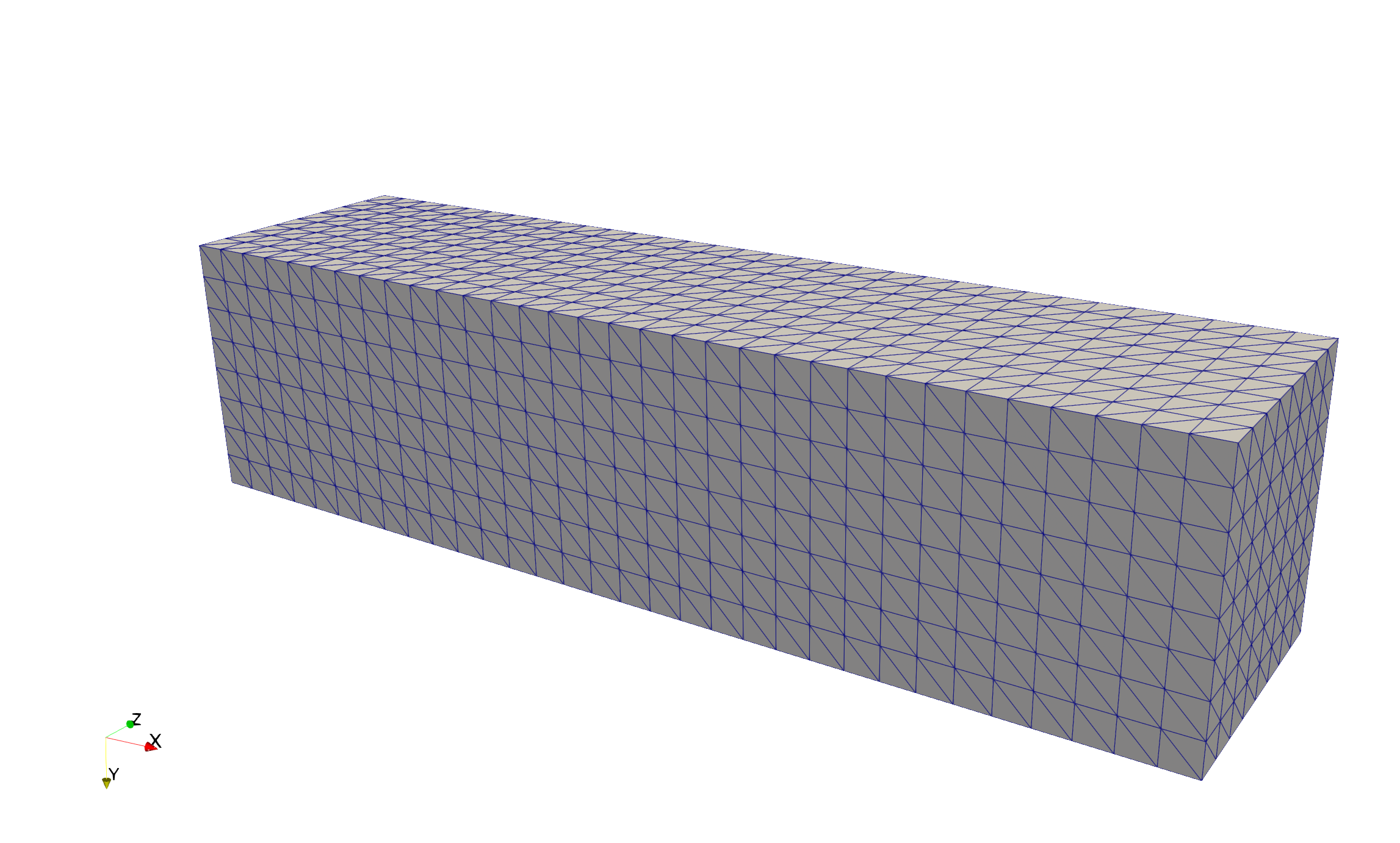}
  \caption{Parallelepiped meshed with hexahedra and tetrahedra. Notice that the element edges in the tet mesh indicate that functions in the finite element space will not be exactly symmetric with respect to the coordinate planes.}
  \label{fig-para-meshes}
\end{figure}

Specifically, we consider a parallelepiped of dimensions $L_x=4,\; L_y\equiv L_z=1$, placed with its
edges parallel to the Cartesian axes and with its center on the origin (see
Figure~\ref{fig-para-meshes}). The material is assumed to be elastic with Young's modulus $E=1$ and
Poisson's ratio $\nu=0$. A compatible body force $\mbs{f} = \sin(3\pi x/L_x)\;\mbs{i}$ is applied on
the body, with $\mbs{i}$ being, as in the first example, the unit basis vector on the $x$ direction. Since Poisson effects are eliminated, the displacement field of the body is simply
\begin{equation*}
    \mbs{u}(x,y,z) =
    \frac{L_x^2}{9\,\pi^2} \sin(3\pi x/L_x)
    \, \mbs{i}\ ,
\end{equation*}
which is a centered vector field.

\begin{figure}[t]
  \centering
  \includegraphics[width=0.8\textwidth]{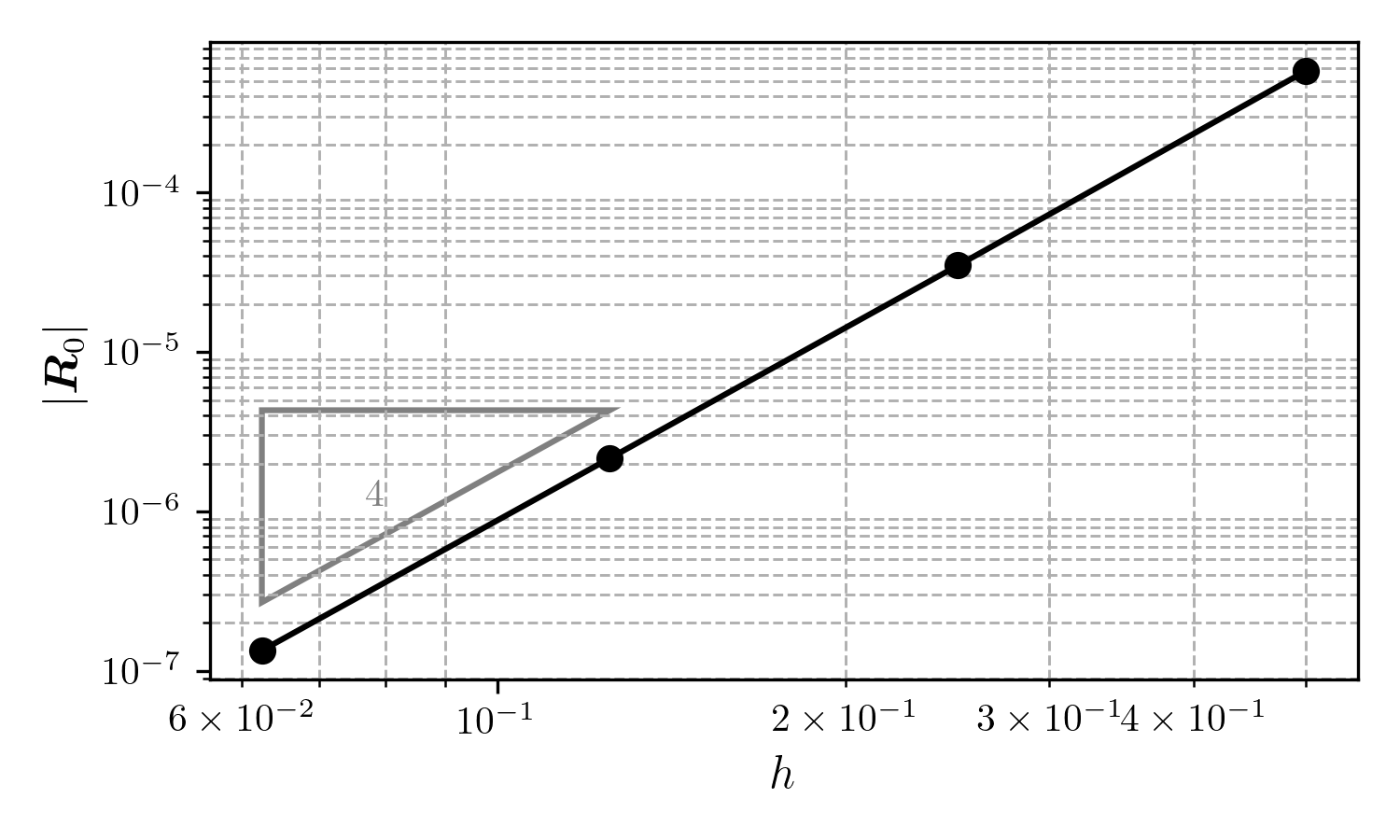}
  \includegraphics[width=0.8\textwidth]{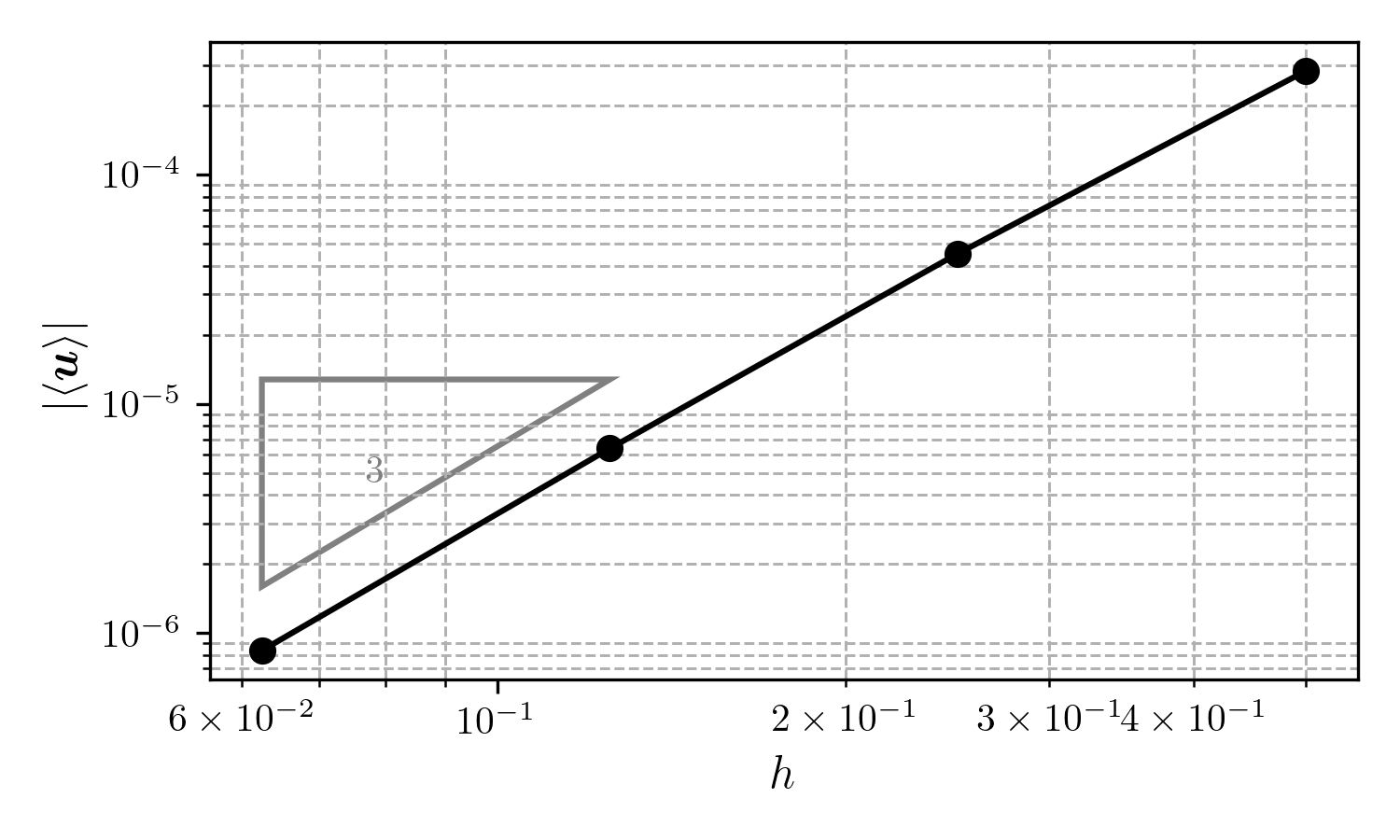}
  \includegraphics[width=0.8\textwidth]{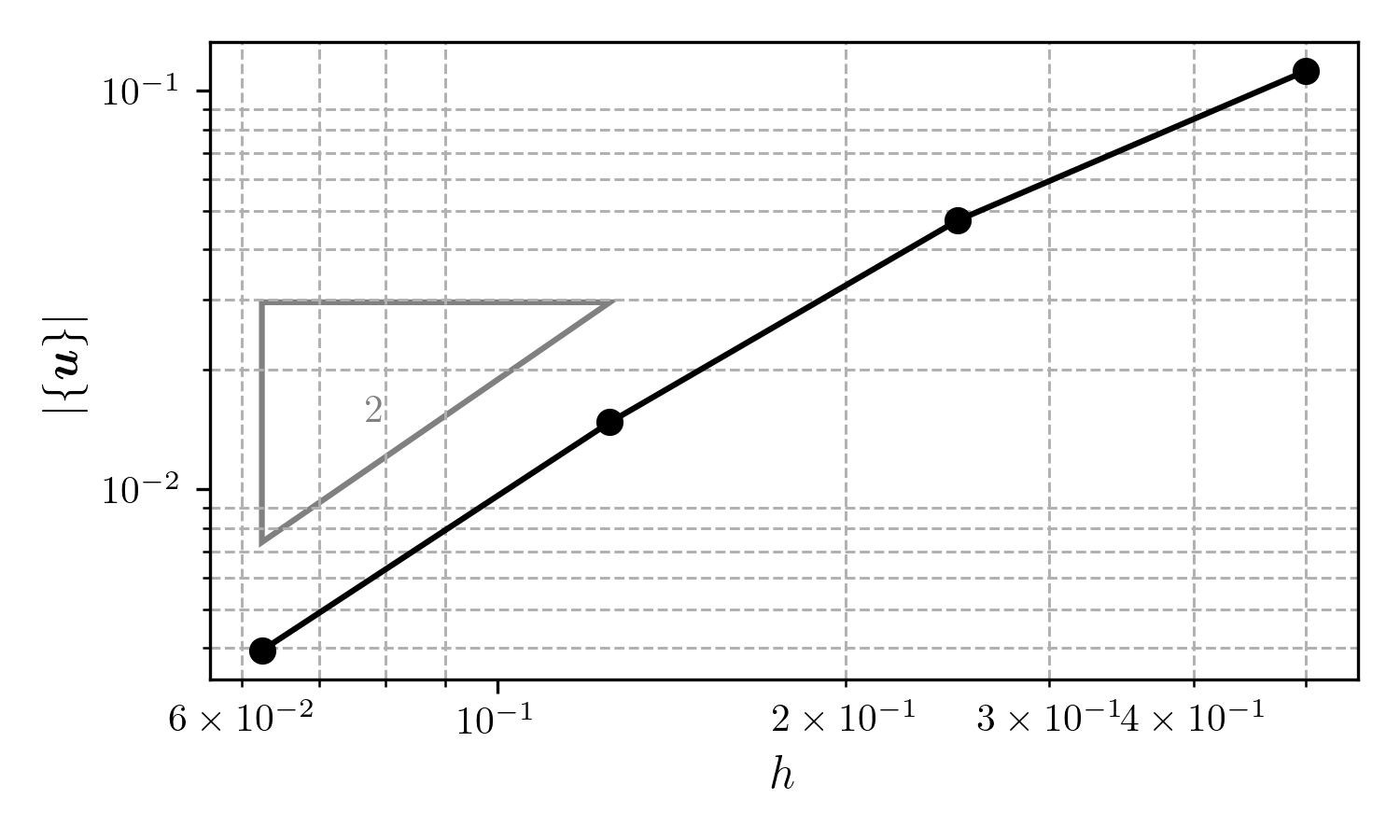}
  \caption{Parallelepiped example. 
  Solution obtained with tetrahedra and nodal constraints.
  Top: norm of the reaction at node at the origin;
  Middle: norm of mean displacements;
  Bottom: norm of the first moment of displacements.}
  \label{fig-para-constrained}
\end{figure}

This (essentially one-dimensional) Neumann problem might be solved using a standard finite element
approximation of the elasticity equations if a mesh is built that includes nodes in the origin and
on the coordinate axes. For the simple geometry under consideration this is easily verified, and
thus we can select six constraints, three at the origin and three more to preclude rigid body
motions, and obtain a unique solution.

\begin{figure}[t]
  \centering
  \includegraphics[width=0.8\textwidth]{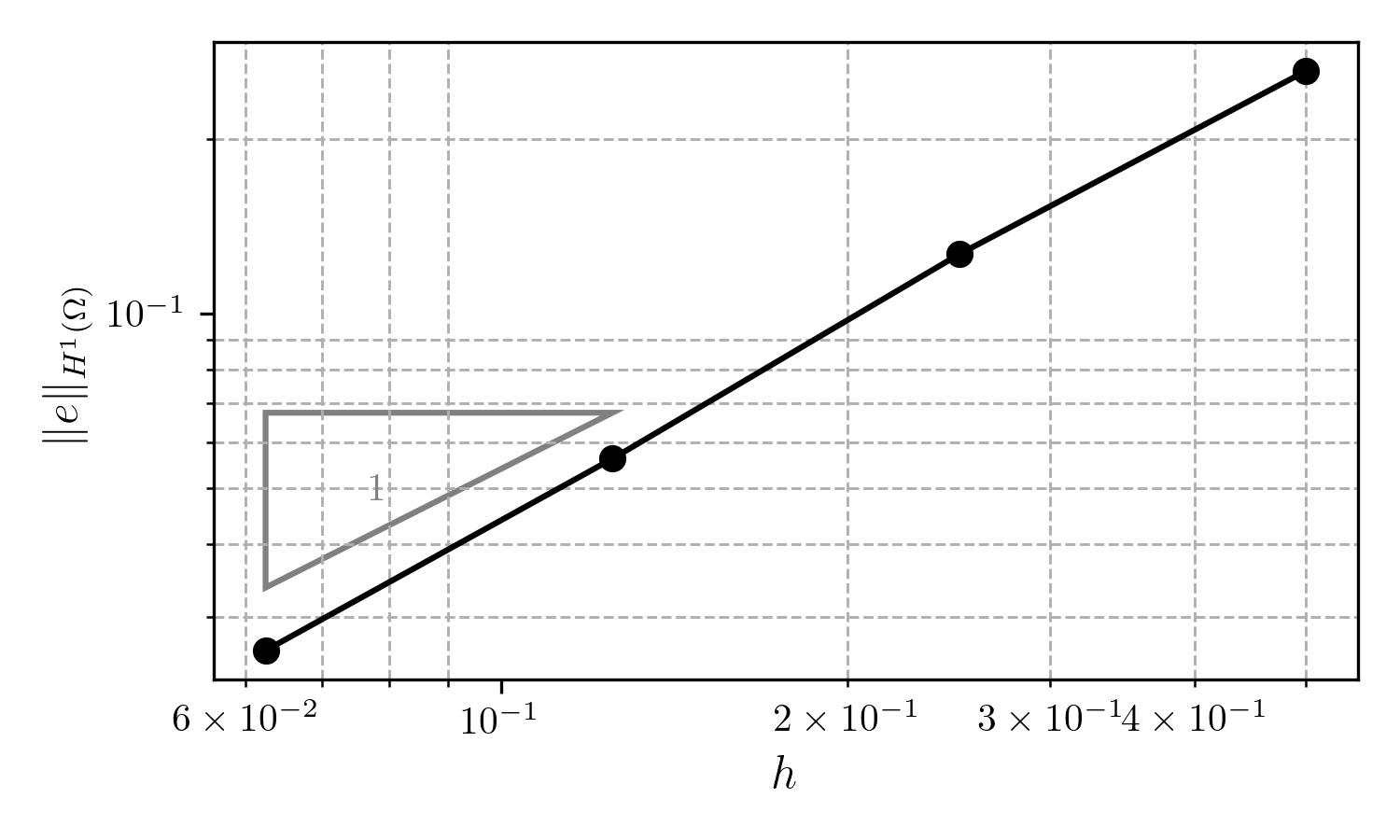}
  \includegraphics[width=0.8\textwidth]{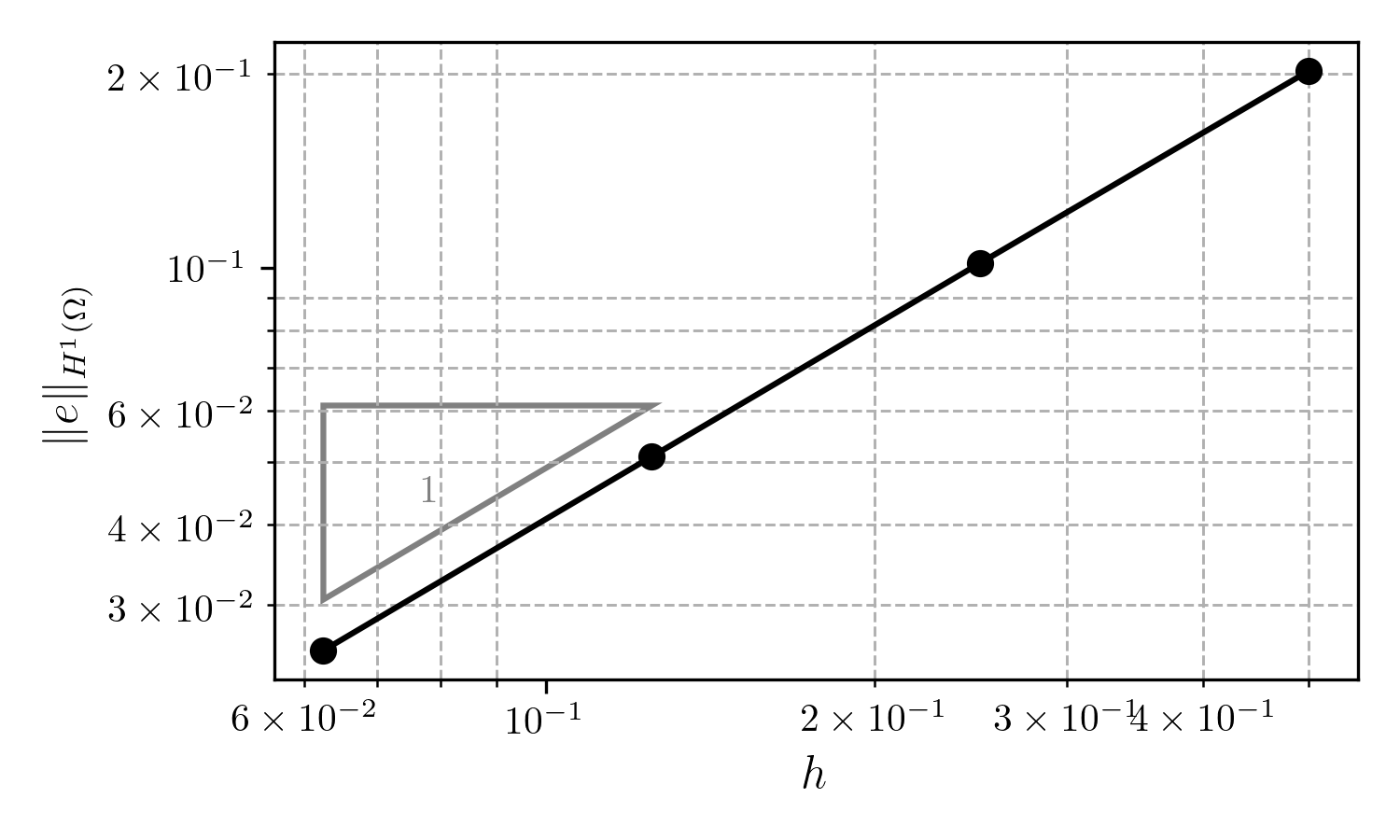}
  \caption{Parallelepiped example. 
  $H^1$ norm of the errors in the solutions obtained with
  constrained displacements.
  Top: tetrahedral mesh;
  Bottom: hexahedral mesh.}
  \label{fig-para-tqerror}
\end{figure}

We must point out, however, that even for this simple problem, constraining the
displacement field in selected nodes to preclude rigid body motions might entail errors that can
pass unnoticed unless carefully examined. Indeed, when the parallelepiped is meshed with hexahedral
elements, the discrete model preserves exactly the symmetry of the body and the load, hence the reactions on the
constrained degrees of freedom identically vanish. However, when the body is meshed with tetrahedra,
some anisotropy is induced in the solution since the discrete model is no longer exactly symmetric.
See Figure~\ref{fig-para-meshes} for an illustration of the two types of meshes employed, and the
broken symmetry due to the orientation of the tetrahedral elements.

Figure~\ref{fig-para-constrained} shows, for several mesh sizes, the norm of $\mbs{R}_{0}$, the
reaction force at the origin, when the model employs tetrahedra and all the degrees of freedom of
this node are constrained. While the size of this reaction decreases with the mesh size, it is apparent
from this figure that it never vanishes completely. Also, as a consequence of these small
incompatibilities in the loading, the solution itself is not centered. The mid and bottom plots in Figure~\ref{fig-para-constrained} depict the norms of $\langle \mbs{u} \rangle$ and $\{ \mbs{u} \}$ in the
solutions with tetrahedra, which should be zero. However, for the
unstructured mesh these two quantities are small, albeit nonzero. In contrast, when the mesh employs
regular hexahedra, the solution is exactly centered since the discretization does not spoil the
symmetry of the parallelepiped. Let us note that, as shown in Figure~\ref{fig-para-tqerror}, these constrained solutions converge optimally for both types of meshes.

\begin{figure}[ht]
  \centering
  \includegraphics[width=0.8\textwidth]{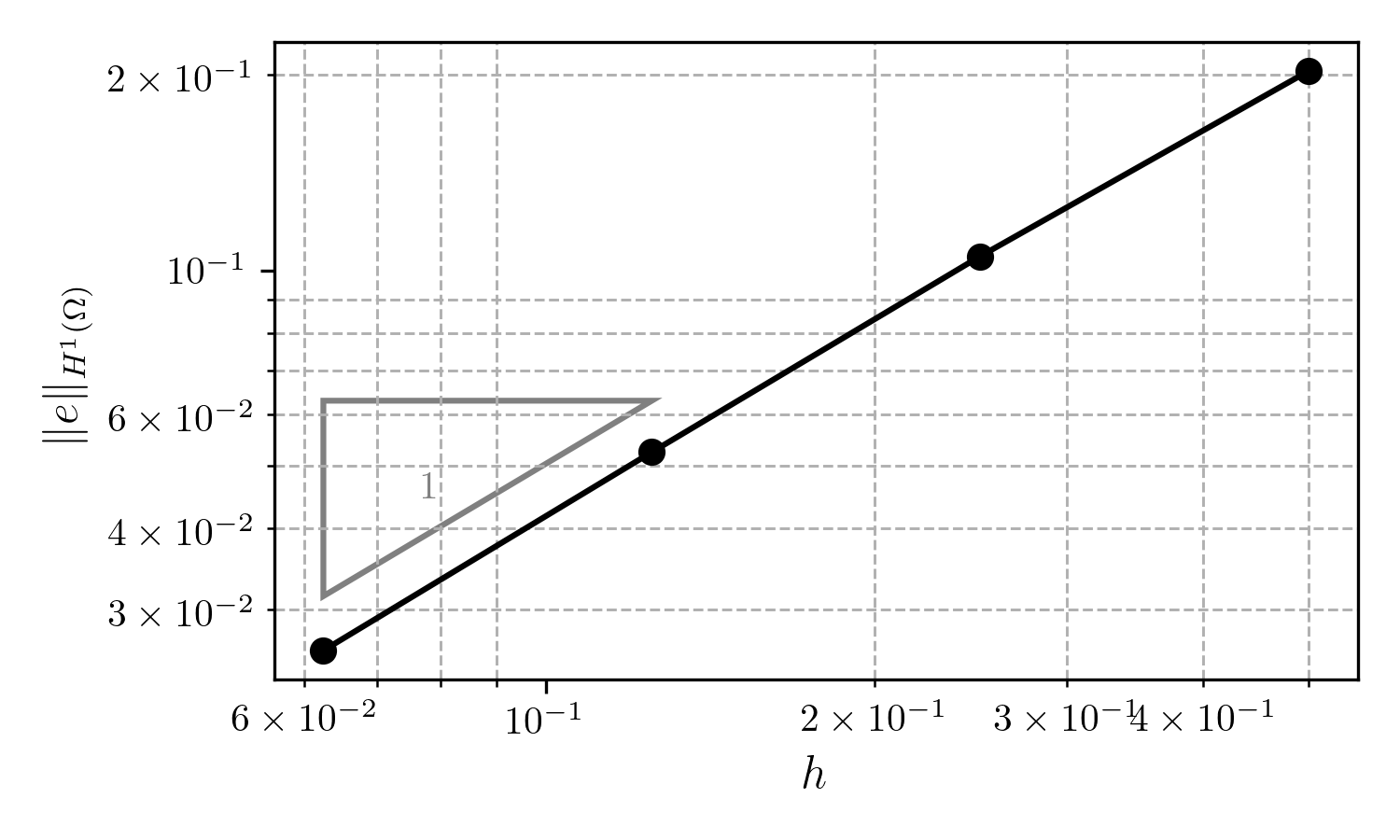}\hfill
  \caption{Parallelepiped example. $H^1$ norm of the errors in the solution obtained with
    tetrahedra and the regularized formulation.}
  \label{fig-para-serror}
\end{figure}

\begin{figure}[ht]
  \centering
  \includegraphics[width=0.8\textwidth]{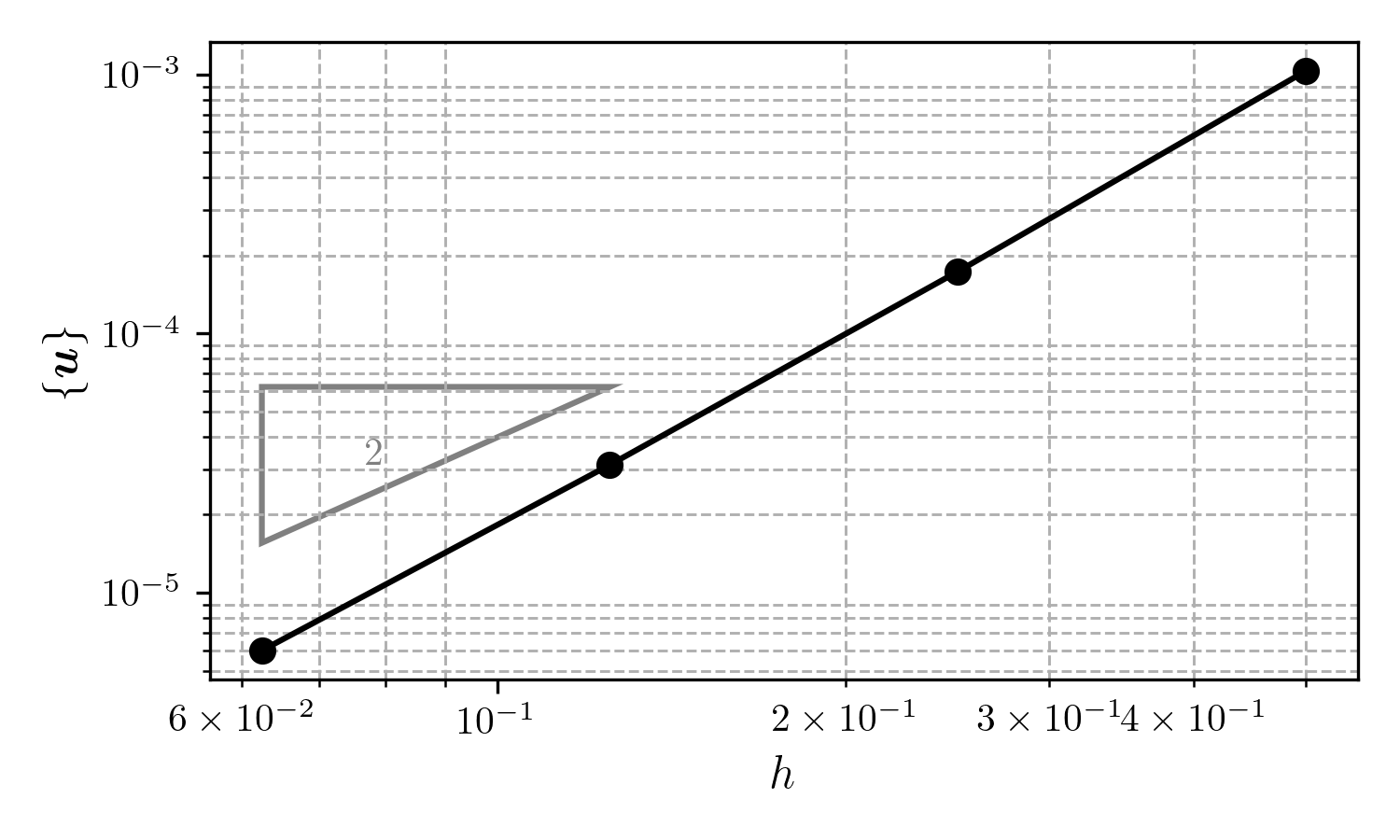}\hfill
  \caption{Parallelepiped example. Norm of the first moment of the displacement field obtained with
    the regularized formulation.}
  \label{fig-para-sjmom}
\end{figure}

\begin{figure}[ht]
  \centering
  \includegraphics[width=0.8\textwidth]{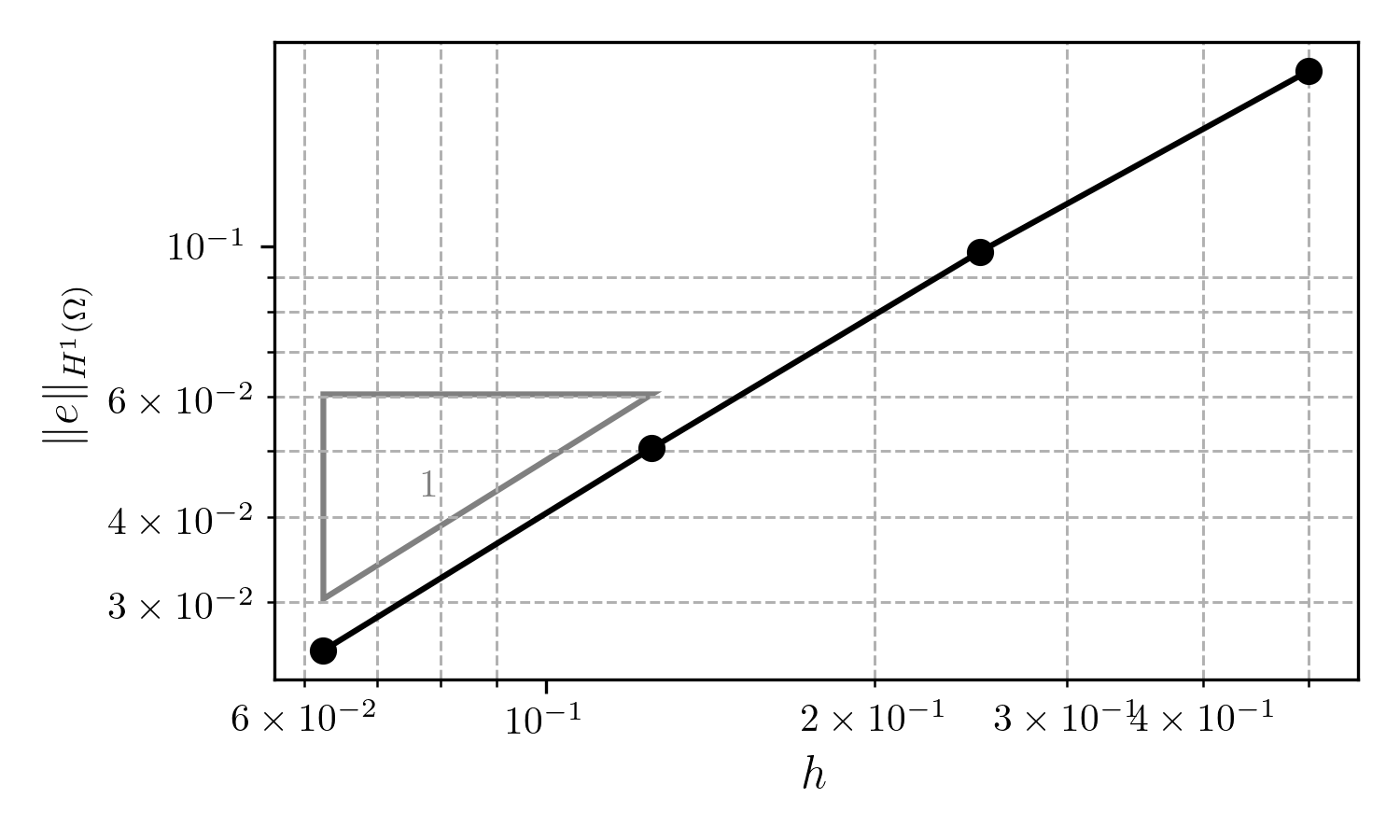}\hfill
  \caption{Parallelepiped example. $H^1$ norm of the errors in the solution obtained with
    tetrahedra and the two-step, regularized formulation.}
  \label{fig-para-rerror}
\end{figure}

The regularized method introduced in Section~\ref{sec-regularized} yields (convergent) solutions to
Neumann problems such as this one, even without any constrained degrees of freedom (see Figure~\ref{fig-para-serror}). With this method, however, the solutions obtained with the tetrahedral meshes are not centered (see Figure~\ref{fig-para-sjmom}). To remedy this undesirable effect derived from the anisotropy of the discretization, the two-step method proposed in Section~\ref{subs-two-step} can be employed. When using it, the solution obtained is convergent (see Figure~\ref{fig-para-rerror}), but also perfectly centered without constraining any degrees of freedom, and for all meshes.

\subsection{Imposing periodic boundary conditions}
\label{subs-periodic}
An import class of Neumann problems appears frequently in micromechanical computations. There, to calculate the homogenized properties of heterogeneous materials, it suffices to study representative volume elements (RVE), assuming there is a clear separation of scales, and subject them to macroscopic deformations. Moreover, to enforce the Hill-Mandell\footnote{This condition states that the macroscopic stress power must equal the volume average of the microscopic stress power over an RVE, thus warranting energy consistency across scales during computational homogenization.} conditions on these elements, it is often chosen to impose periodic boundary conditions on the latter. 

Periodic boundary conditions on a cubic domain $\Omega$ are imposed as follows. Let the boundary $\partial\Omega$ be partition into disjoint sets $\partial\Omega^+$ and $\partial\Omega^-$. For simplicity, $\partial\Omega^+$ often consists of three non-parallel faces of $\partial\Omega$, and $\partial\Omega^-$ contains the three remaining ones. For this choice, there exists a projection operator $\Pi:\partial\Omega^+\to\partial\Omega^-$ that maps every point on $\partial\Omega^+$ to a point on the opposing face. If $\mbs{u}$ denotes the displacement of the body and $\mbs{t}$ the traction on the surface, periodic boundary conditions on $\Omega$ are defined as:
\begin{equation}
\label{eq-periodic}
\mbs{u}(\mbs{x}) - (\mbs{u}\circ\Pi)(\mbs{x}) = \bar{\mbs{\varepsilon}}
(\mbs{x}-\Pi(\mbs{x}))\ ,
\qquad
\mbs{t}(\mbs{x}) + (\mbs{t}\circ\Pi)(\mbs{x}) = \mbs{0}\ ,
\end{equation}
for all $\mbs{x}\in\partial\Omega^+$ and some macroscopic strain~$\bar{\mbs{\varepsilon}}$.
Notice that the elasticity problem with periodic boundary conditions is of pure Neumann type and
therefore its numerical solution demands one of the strategies identified in Section~\ref{sec-intro}.

\begin{figure}[p]
  \centering
  \includegraphics[width=0.49\textwidth]{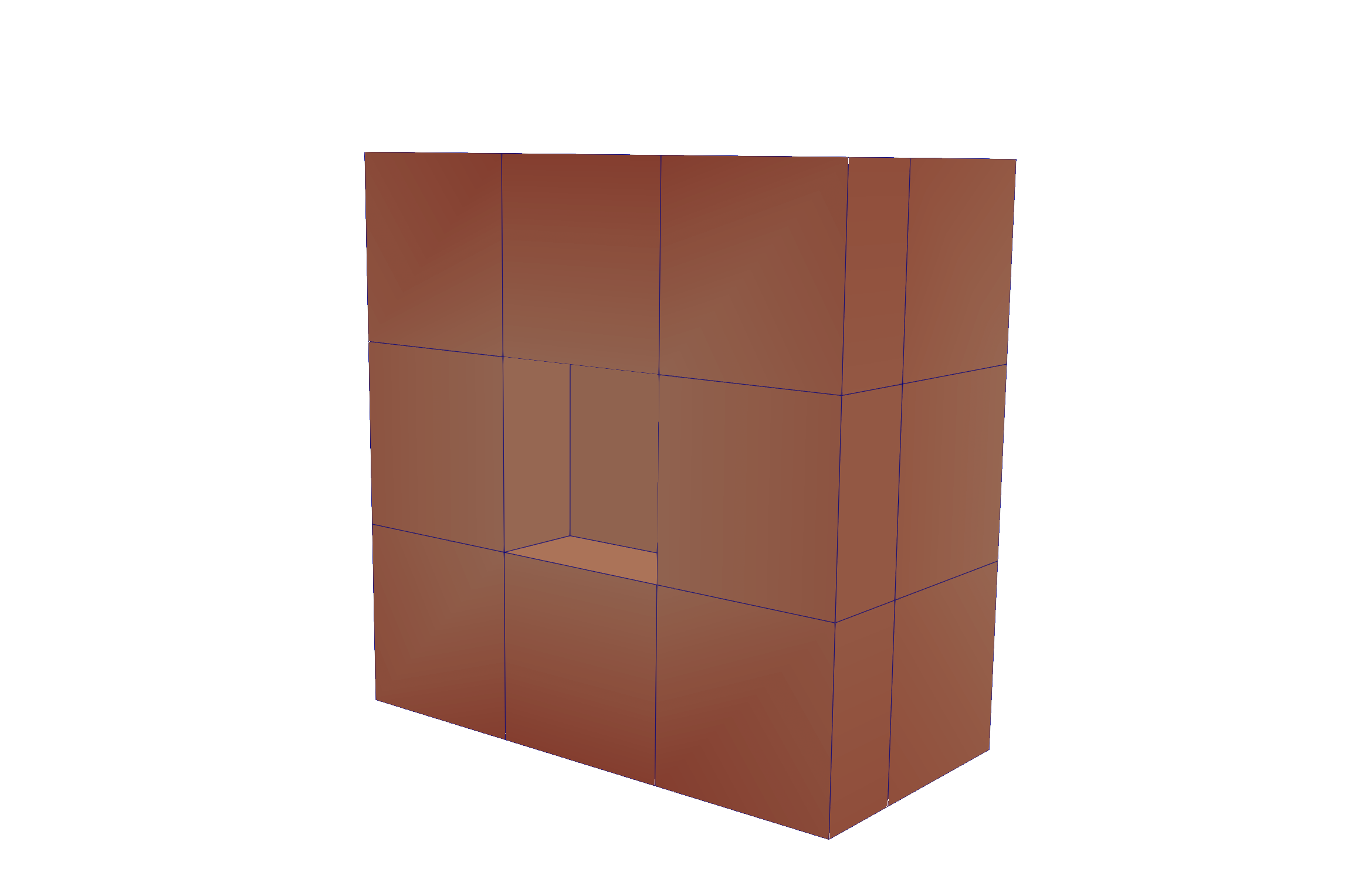}
  \includegraphics[width=0.49\textwidth]{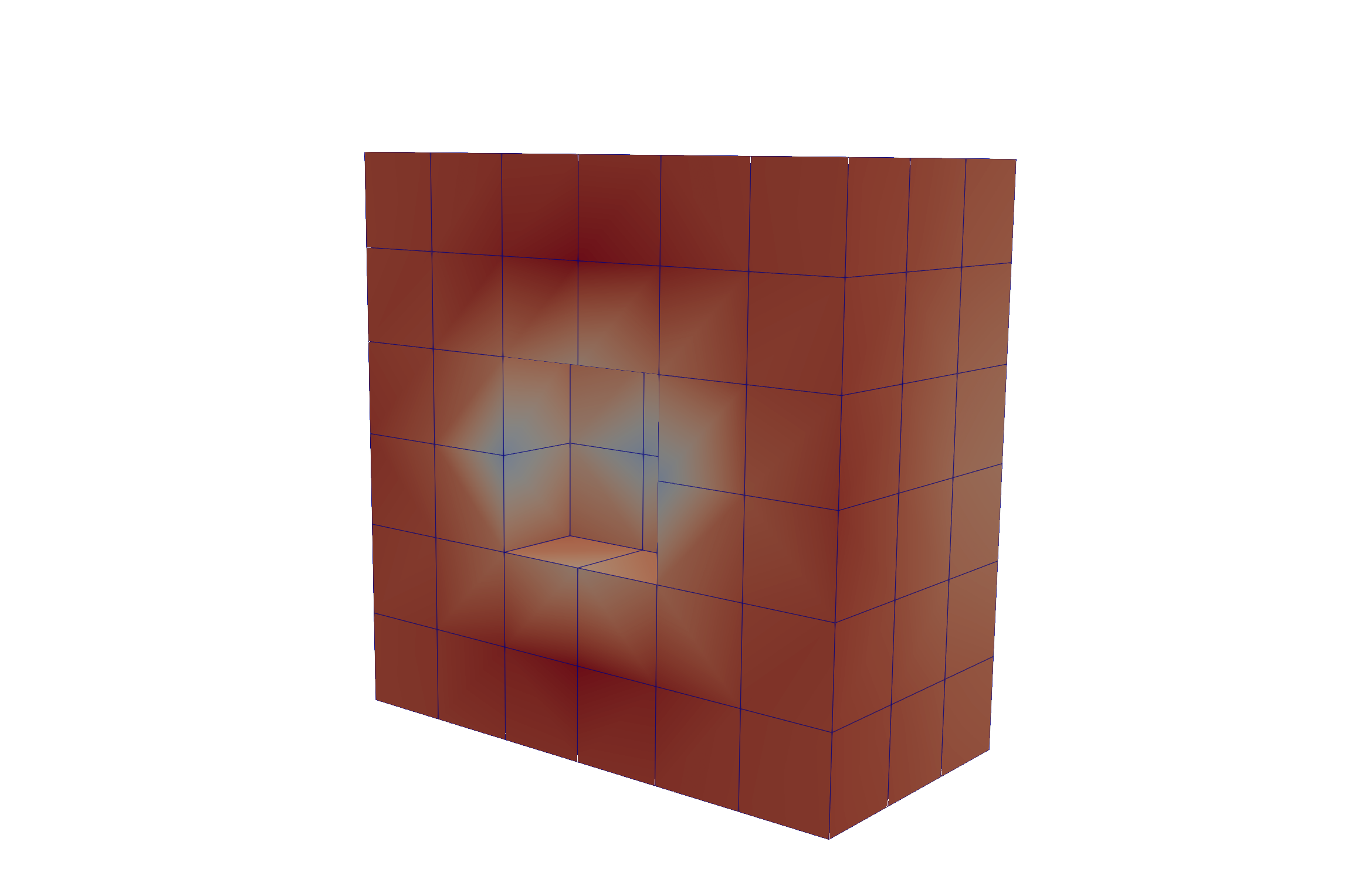}
  \includegraphics[width=0.49\textwidth]{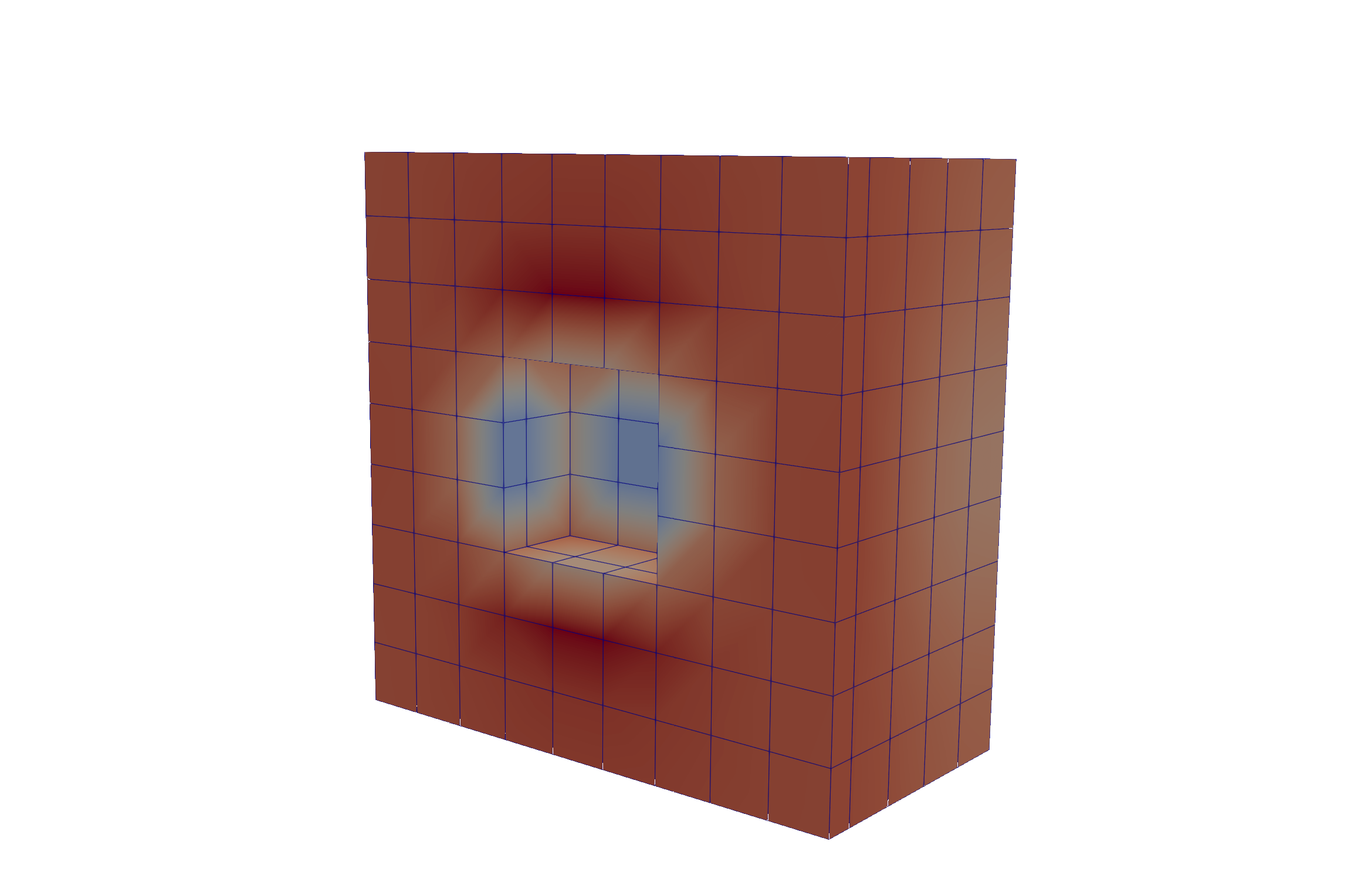}
  \includegraphics[width=0.49\textwidth]{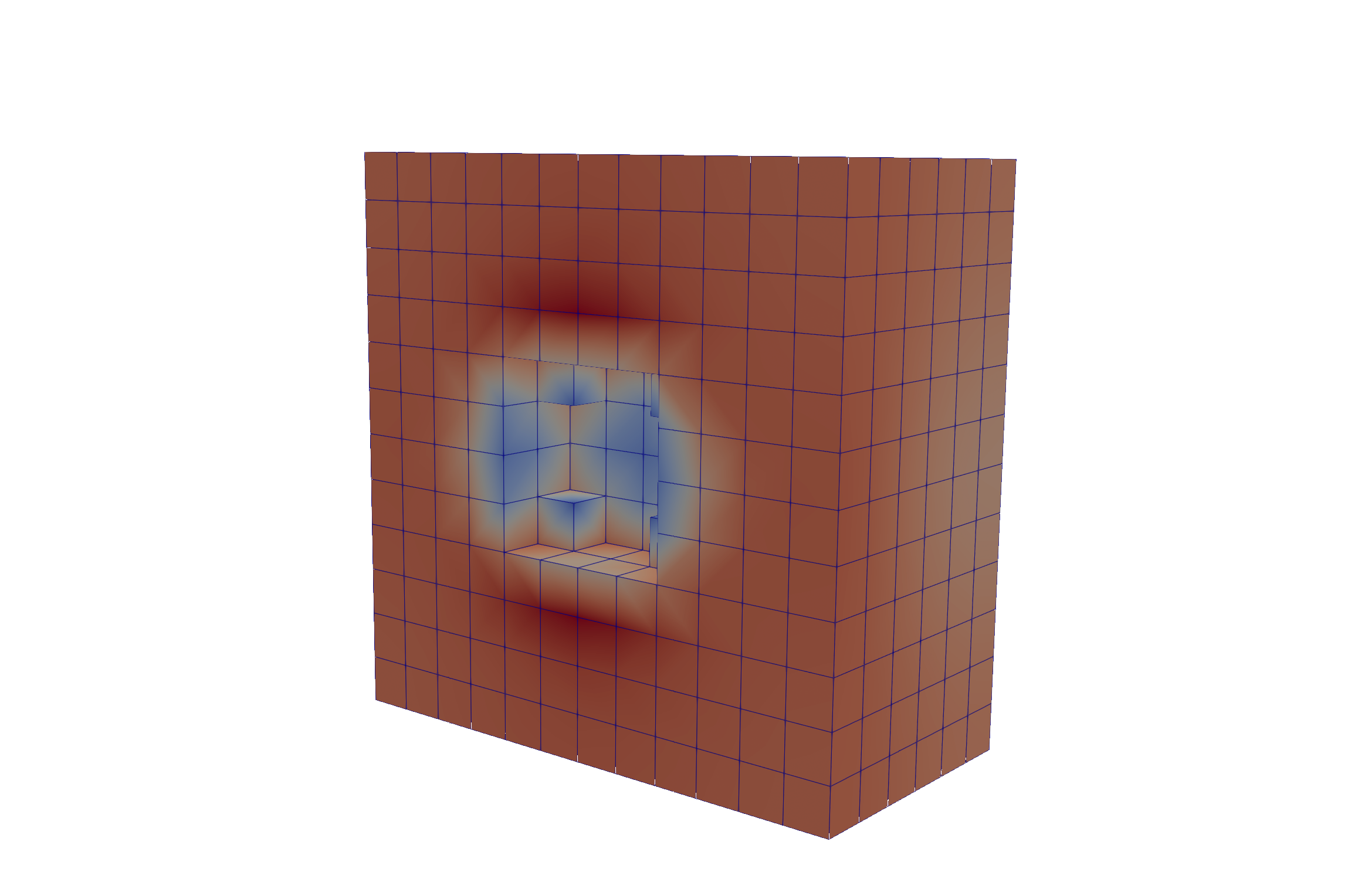}
  \includegraphics[width=0.49\textwidth]{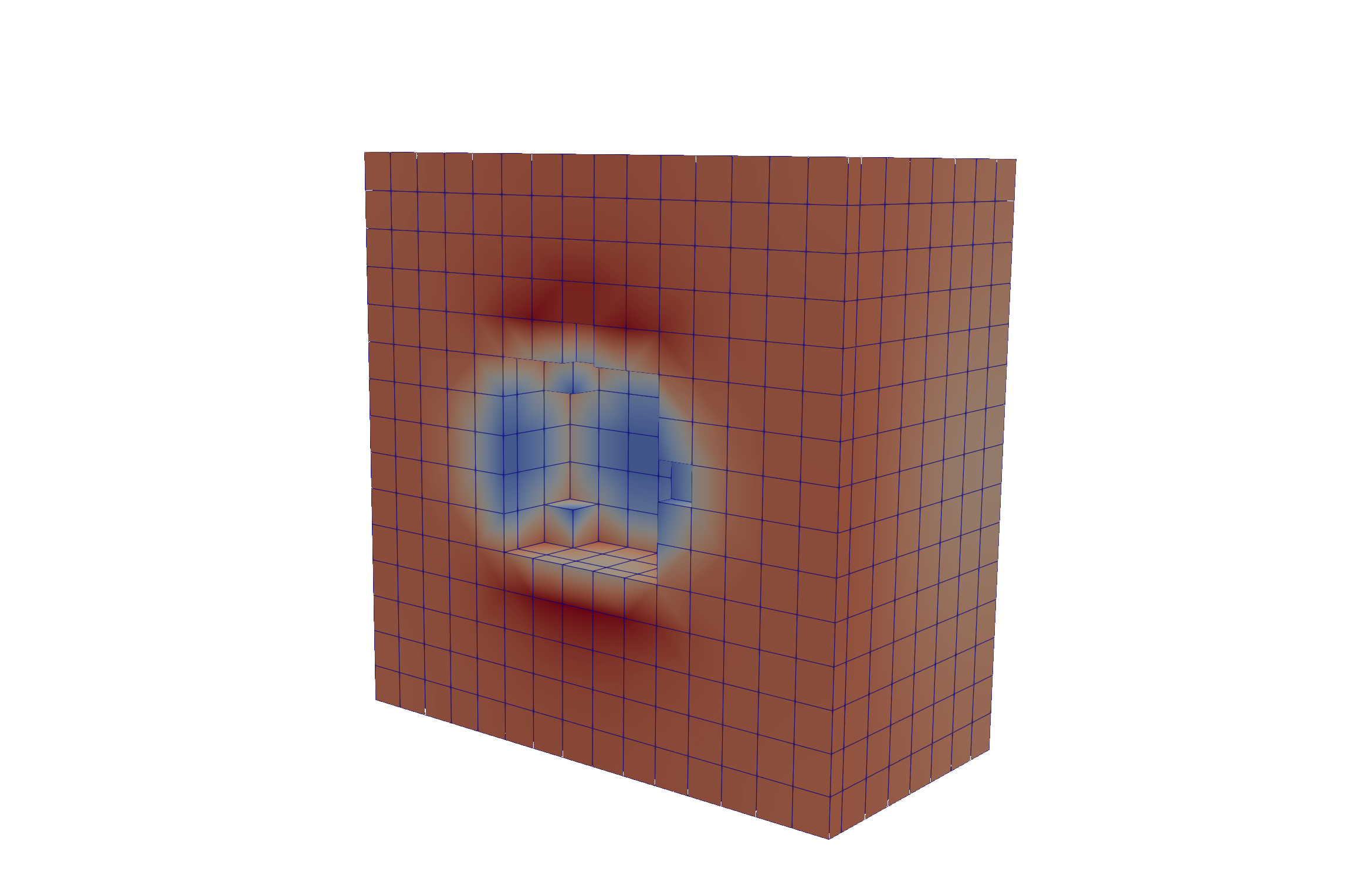}
  \includegraphics[width=0.49\textwidth]{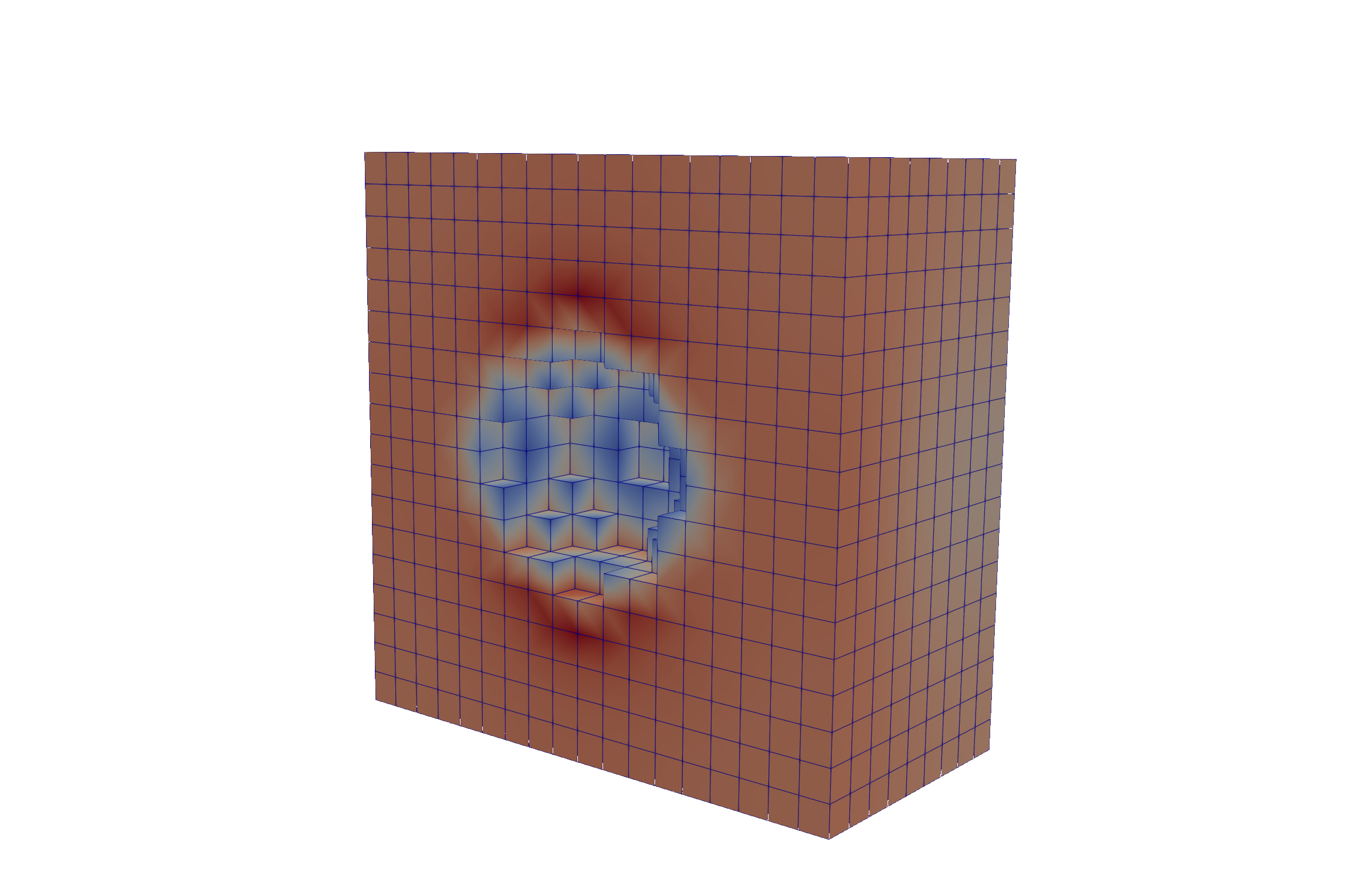}
  \includegraphics[width=0.8\textwidth]{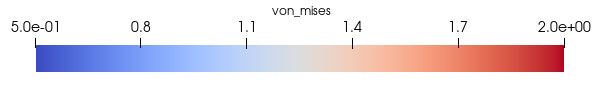}
  \caption{Cross section of RVEs with soft spherical inclusion under imposed shear strain and
    periodic boundary conditions. Color indicates von Mises stress. From left to right, top to bottom,
    meshes have size $(3\,k)^3,\ k=1,2\ldots,6$.}
  \label{fig-periodic}
\end{figure}

Several strategies have been proposed in the literature that can be used to implement periodic
boundary conditions on finite element discretization, and all must remove the (infinitesimal) rigid
body motions of the Neumann problem from the periodic solution (see, i.e.,
\cite{michel1999es,miehe2002zs,nguyen2011wd,henys2019yn}). Here, we will study an RVE under periodic boundary conditions implementing Eqs.~\eqref{eq-periodic} and removing the Neumann singularity by employing the regularization proposed in Section~\ref{sec-regularized}.

We consider now the computation of the shear modulus of a cubic elastic RVE with side~$a=1$ inside
of which a spherical inclusion of radius $r=a/5$ is located. To make the results reproducible, we
use finite element discretizations of the voxelized cube, using an increasing number of subdivisions
in the domain. For the matrix and the inclusion we employ elastic materials with Lam\'e parameters
$\lambda=\mu=1$ and $\lambda=\mu=10^{-4}$, respectively. A classical estimate~\cite{mackenzie1950aa}
of the shear stiffness for the homogenized material is
\begin{equation*}
  \label{eq-periodicC}
  C = \mu \left( 1 - \frac{15 (1-\nu)}{7-5\nu} p \right)\ ,
\end{equation*}
where $\nu=\lambda/2/(\lambda+\mu)=0.25$ is the Poisson ratio of the material and
\begin{equation}
  p = \frac{4}{3}\frac{\pi r^3}{a^3}
  \end{equation}
  is the porosity of the RVE. Eq.~\eqref{eq-periodic} is a first order approximation of the change
  of shear stiffness in terms of the porosity which, for the selected properties of the materials and inclusion, evaluates to $C=0.934436327$.

To verify the use of the proposed formulation, we use RVEs with $(3\,k)^3$ elements, $k=1,2,\ldots,6$.
For each mesh, we impose periodic boundary conditions as indicated above, using only
$\epsilon_{xy}=1$, and we eliminate the rigid body motions by regularizing the elasticity operator
as proposed in Section~\ref{sec-regularized} with a parameter $\eta=10^{-2}\,h^{1.5}$. Then, we
calculate the sum of the reactions on a face normal to the $x$ direction and project it onto the $y$
direction to obtain an approximation to $C$. Table~\ref{tab-periodic} summarizes the results of the
computation, and we observe that the numerically obtained value converges to the
estimate~\eqref{eq-periodic}. Since the latter is only an approximation of the true value of the
shear stiffness, a detailed analysis of the convergence rate of these computations is not relevant. At most, one can expect that the computed stiffness remains close to the estimate~\eqref{eq-periodic}.

\begin{table}[ht]
  \begin{center}
    \begin{tabular}{cll}
      \toprule
      $k$ & \multicolumn{1}{c}{$C_h$} & \multicolumn{1}{c}{Error (\%)} \\
      \midrule
1 & 0.94890  & 1.54785 \\ 
2 & 0.93130  & 0.33547 \\
3 & 0.92409  & 1.10714 \\ 
4 & 0.93455  & 0.01260 \\ 
5 & 0.92549  & 0.95749 \\ 
6 & 0.92889  & 0.59318 \\ 
      \bottomrule
\end{tabular}
  \end{center}
  \caption{Approximate shear stiffness $C_h$ obtained with regular meshes of size $(3\,k)^3$ and periodic boundary conditions. Error with respect
  to $C$ provided in Eq.~\eqref{eq-periodic}.}
  \label{tab-periodic}
\end{table}

\subsection{Body with complex geometry under imposed temperature field}
\label{subs-bunny}
In this last example, we study a pure traction problem in
a solid with a complex geometry. Specifically, let us consider
an isotropic thermoelastic solid with the geometry of the
\emph{Stanford bunny} \cite{bunny}, a model often employed for evaluating
the performance of algorithms for computer graphics.
We will generate three different meshes and impose a linear temperature
field on the body. Due to thermoelastic effects, the bunny will
deform but no stresses should appear.

The material that we have employed has Young's modulus $E=1$, Poisson's
ratio $\nu=0.3$, and thermal expansion coefficient $\alpha=1$. The imposed
thermal increment is $\vartheta=x+y+z$, where the latter refer to the
Cartesian coordinates employed in the original bunny model.

\begin{figure}[ht]
  \centering
  \includegraphics[width=0.32\textwidth]{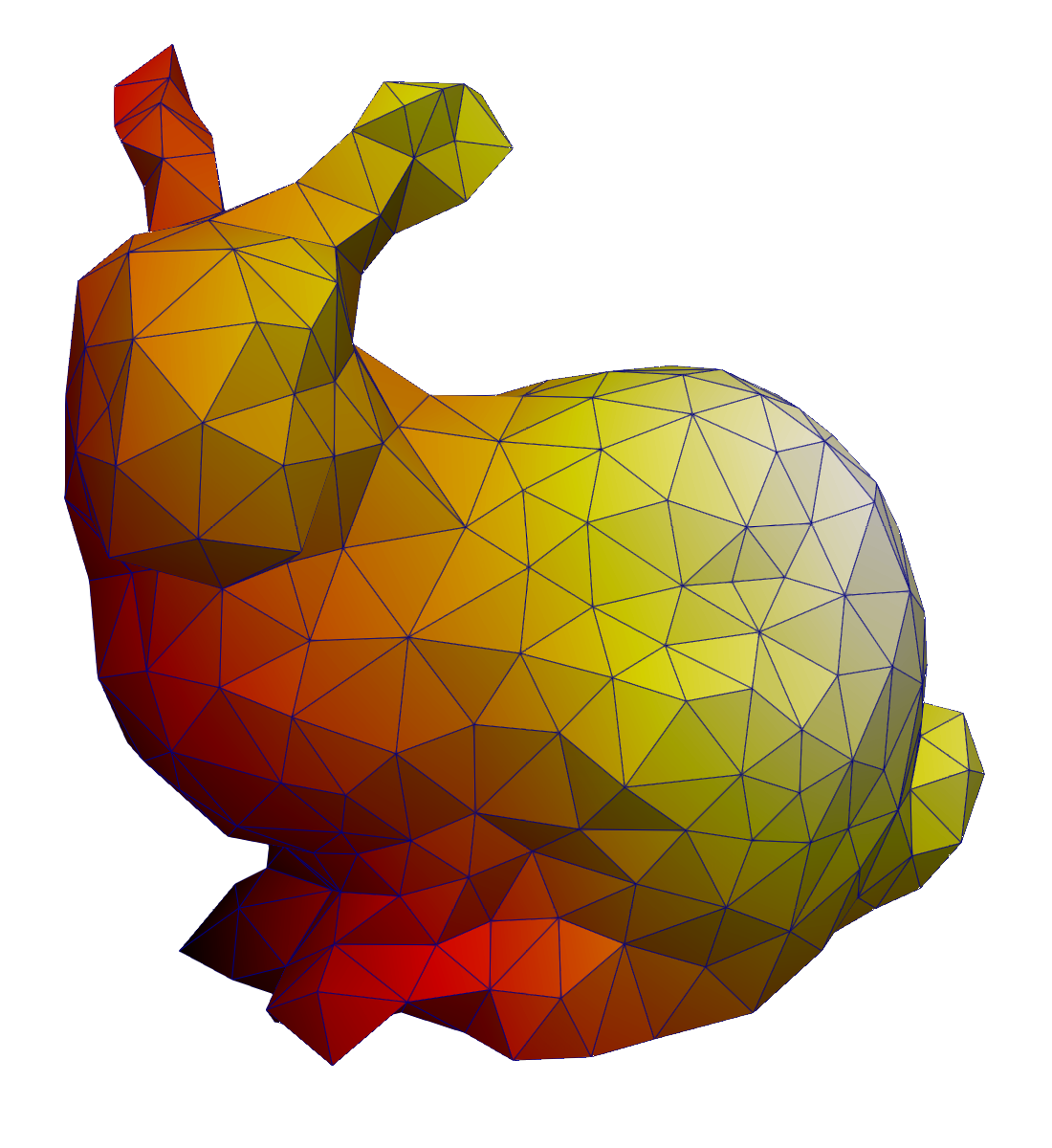}
  \includegraphics[width=0.32\textwidth]{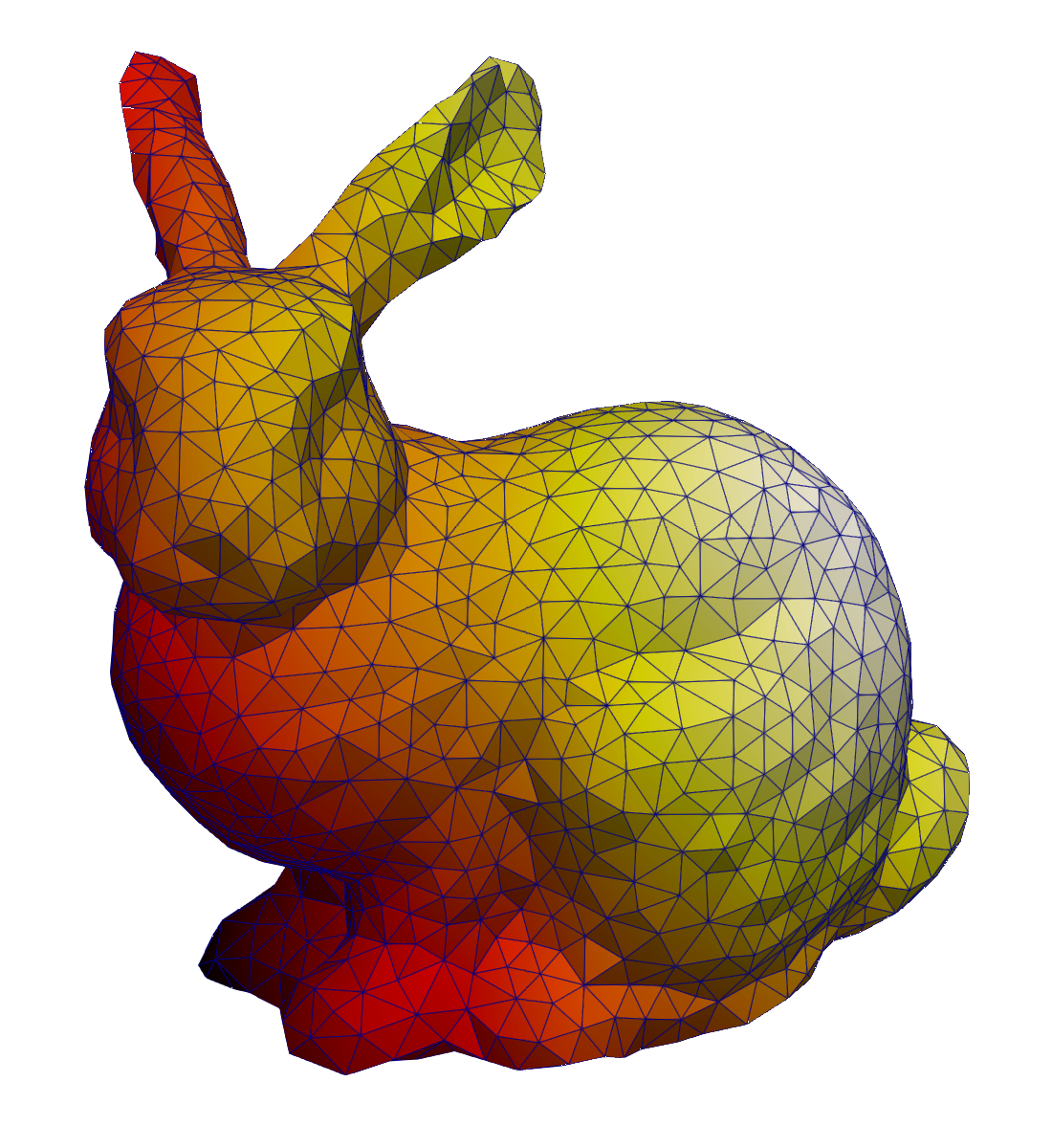}
  \includegraphics[width=0.32\textwidth]{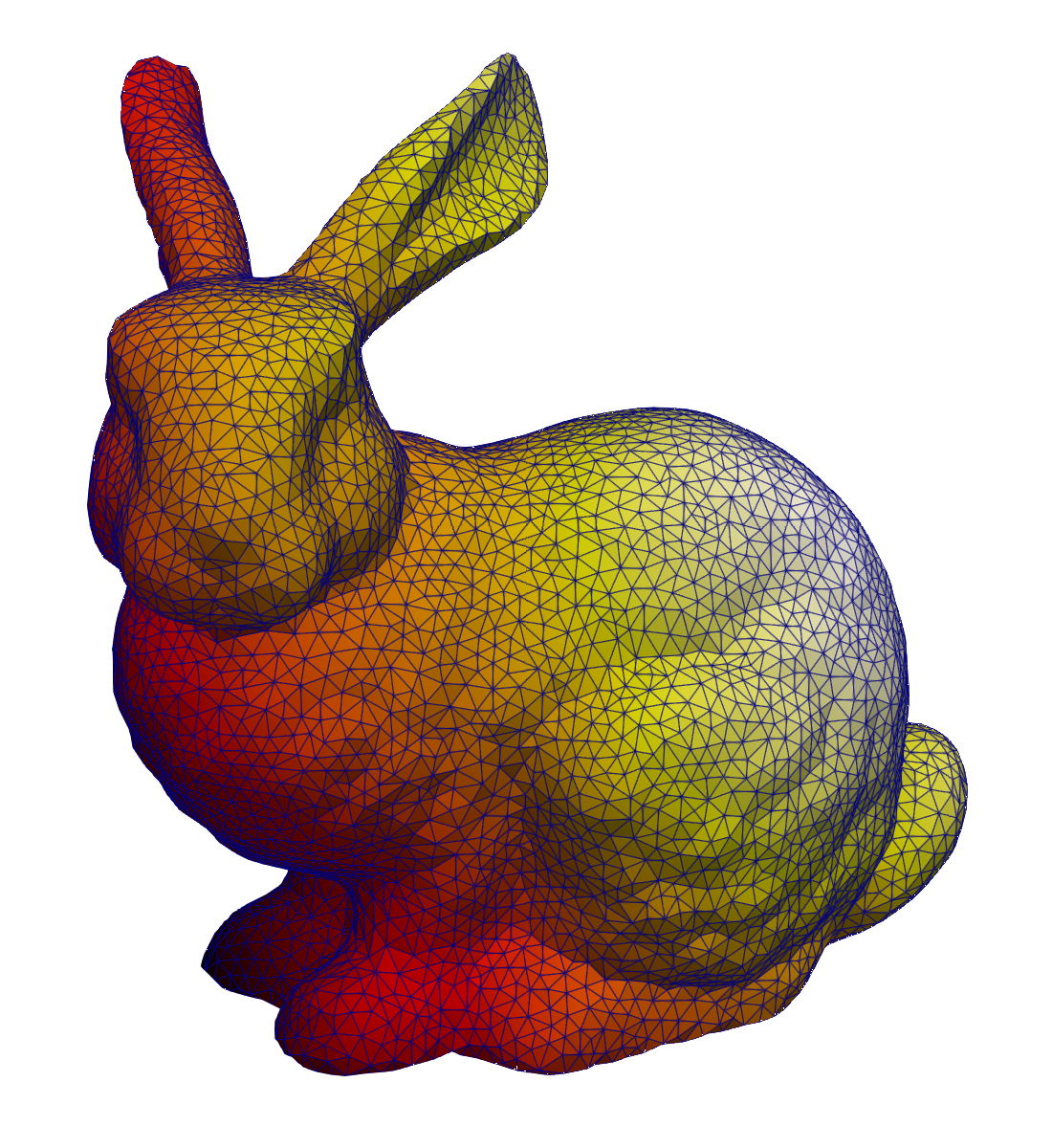}
  \includegraphics[width=0.75\textwidth]{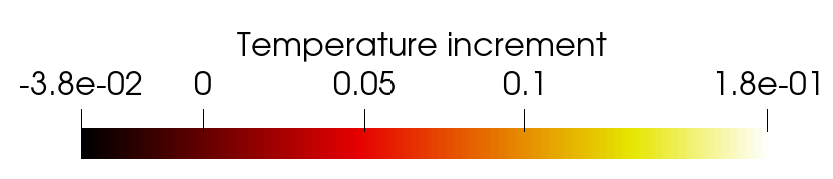}
  \caption{Temperature field of the form $\vartheta=x+y+z$ imposed
    on the coarse, intermediate, and fine meshes of the Stanford
  bunny.}
  \label{fig-bunnies}
\end{figure}

\begin{figure}[p]
  \centering
  \includegraphics[width=0.32\textwidth]{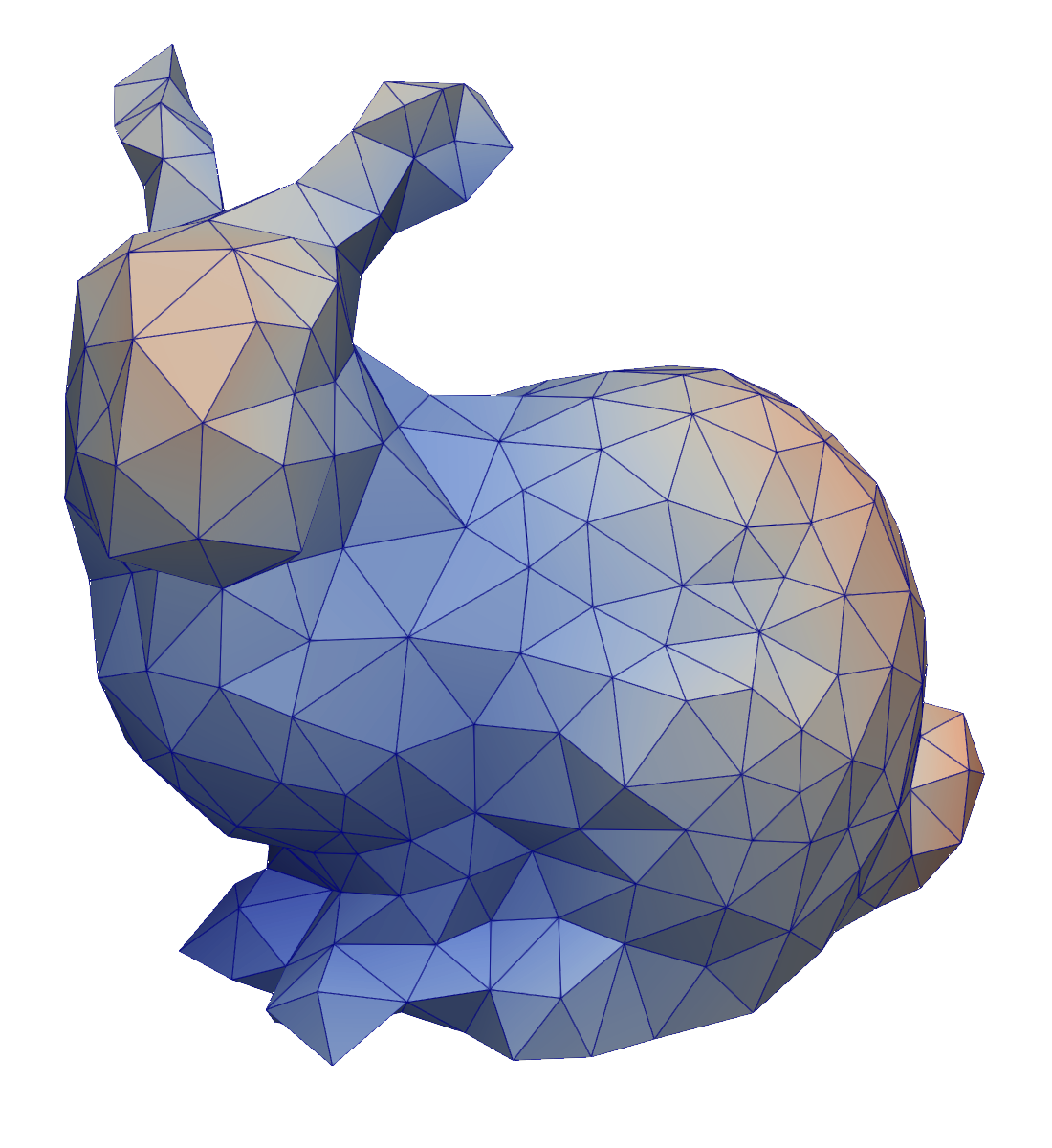}
  \includegraphics[width=0.32\textwidth]{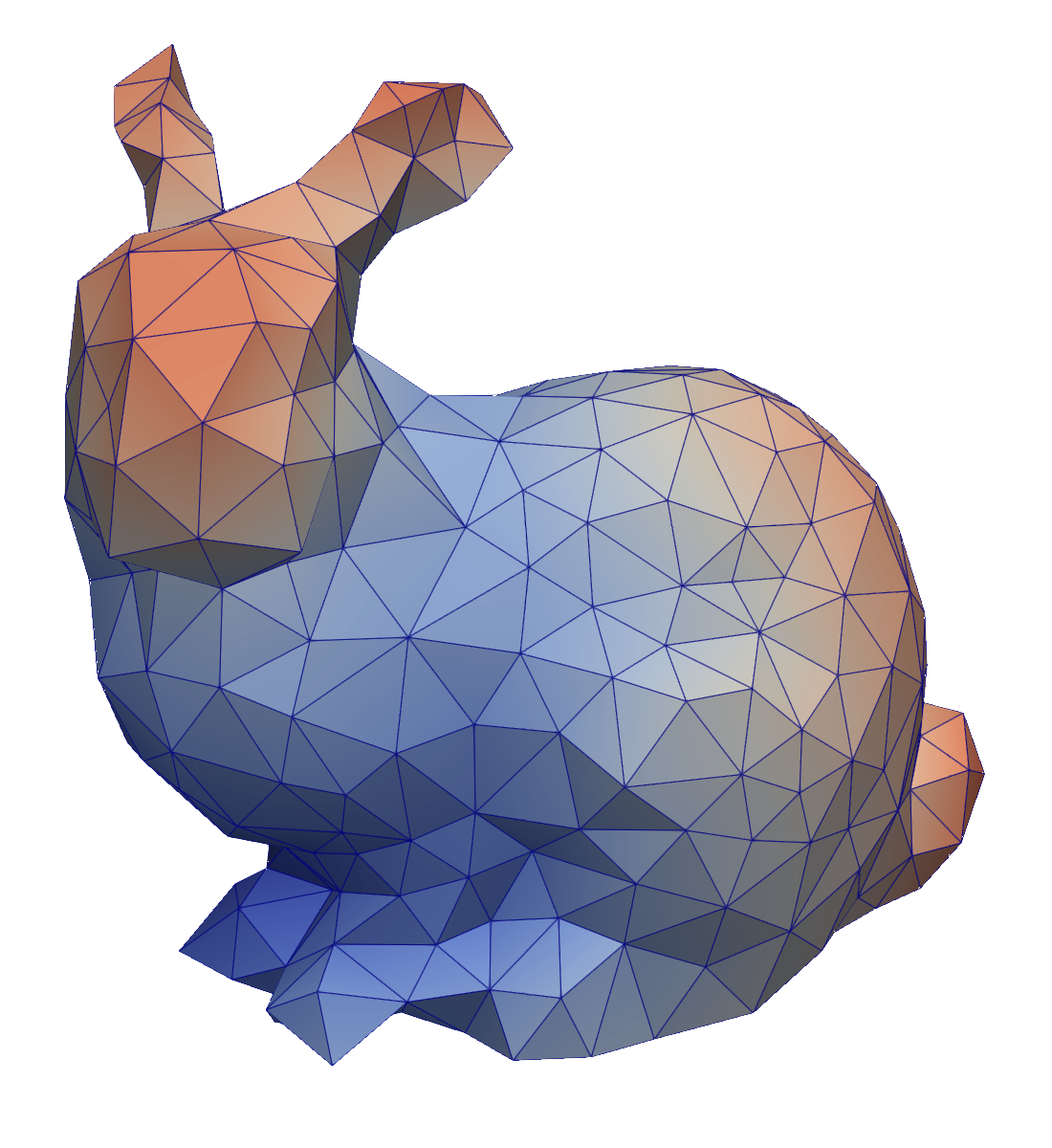}
  \includegraphics[width=0.32\textwidth]{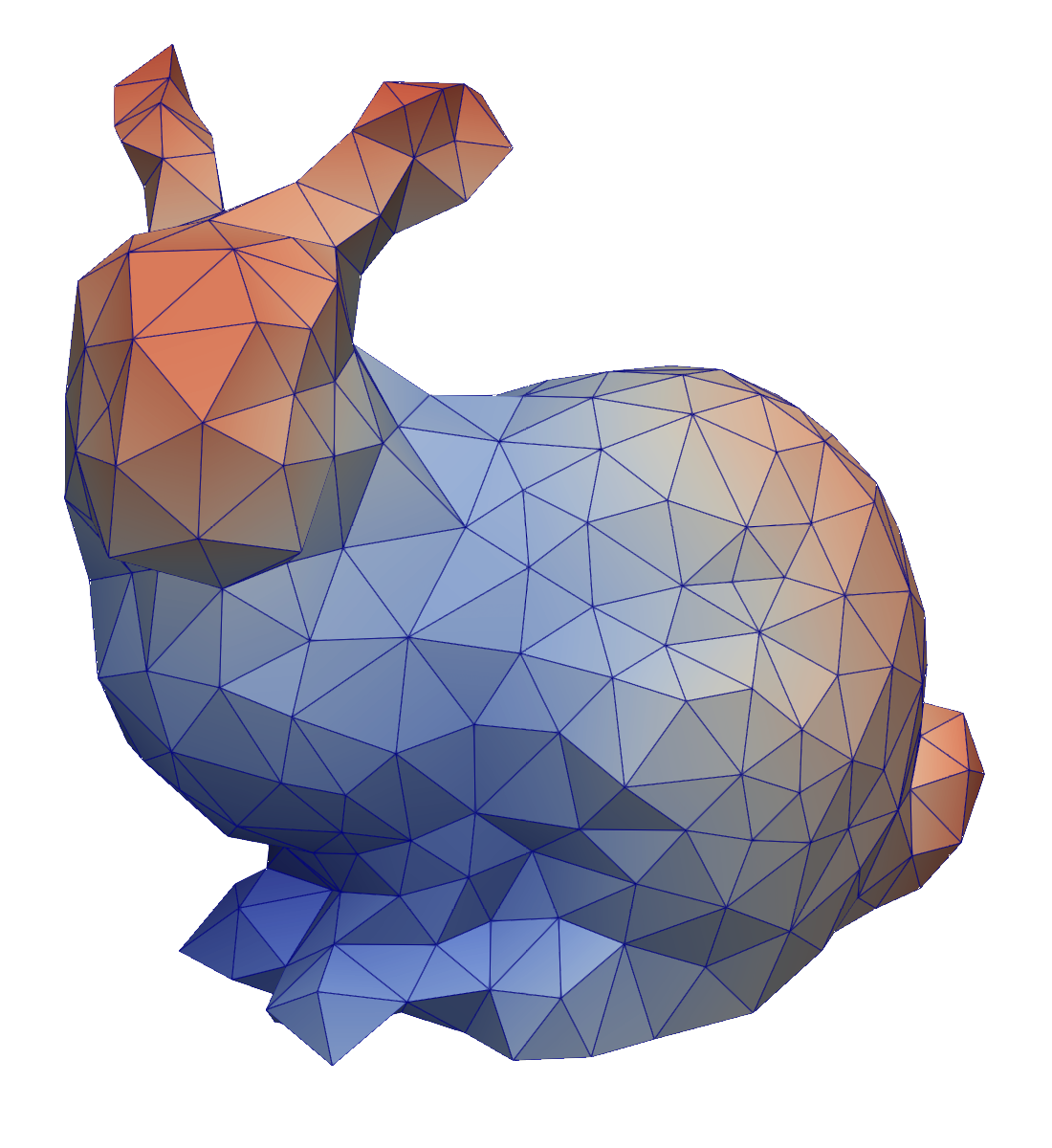}
  \includegraphics[width=0.32\textwidth]{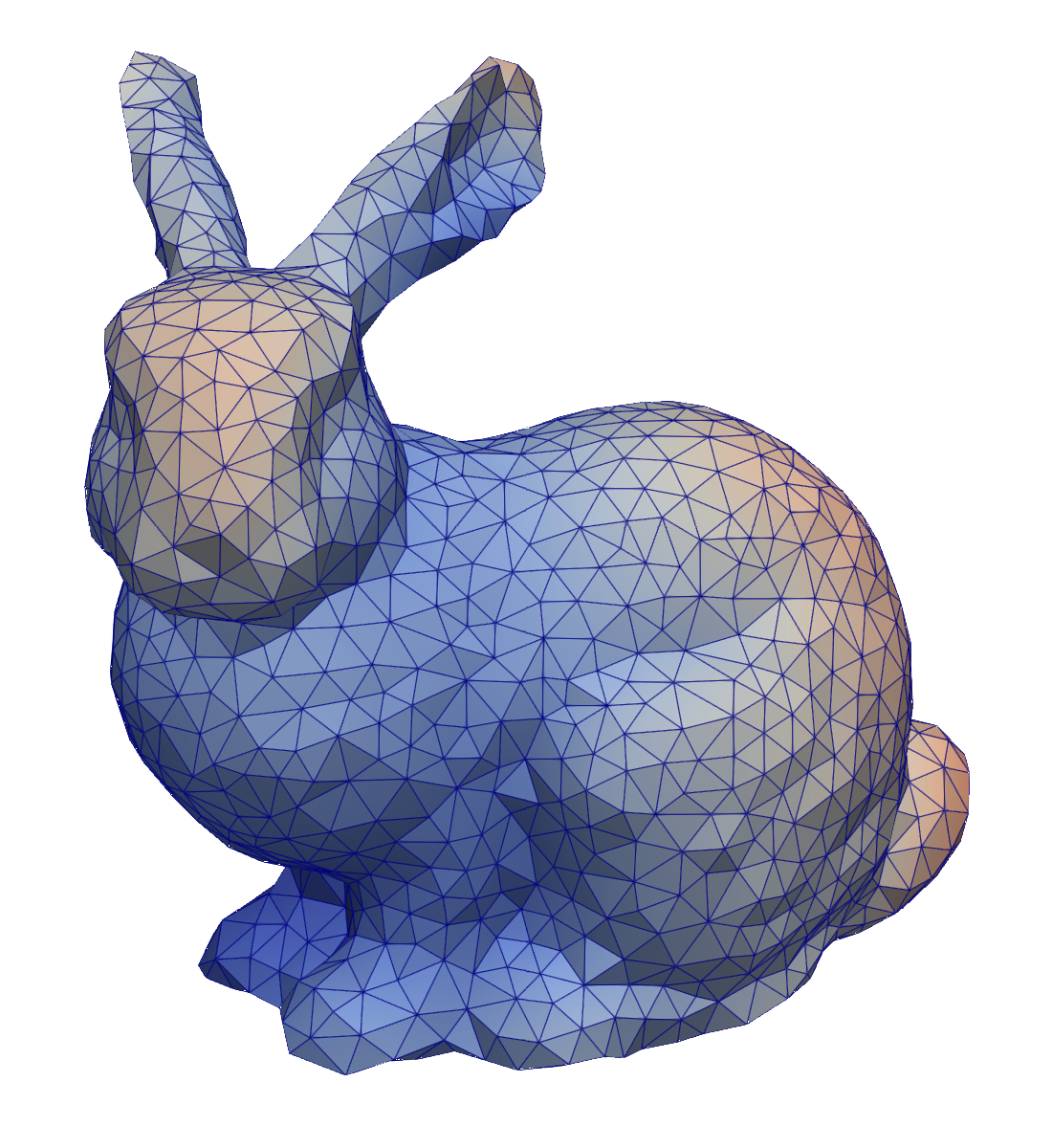}
  \includegraphics[width=0.32\textwidth]{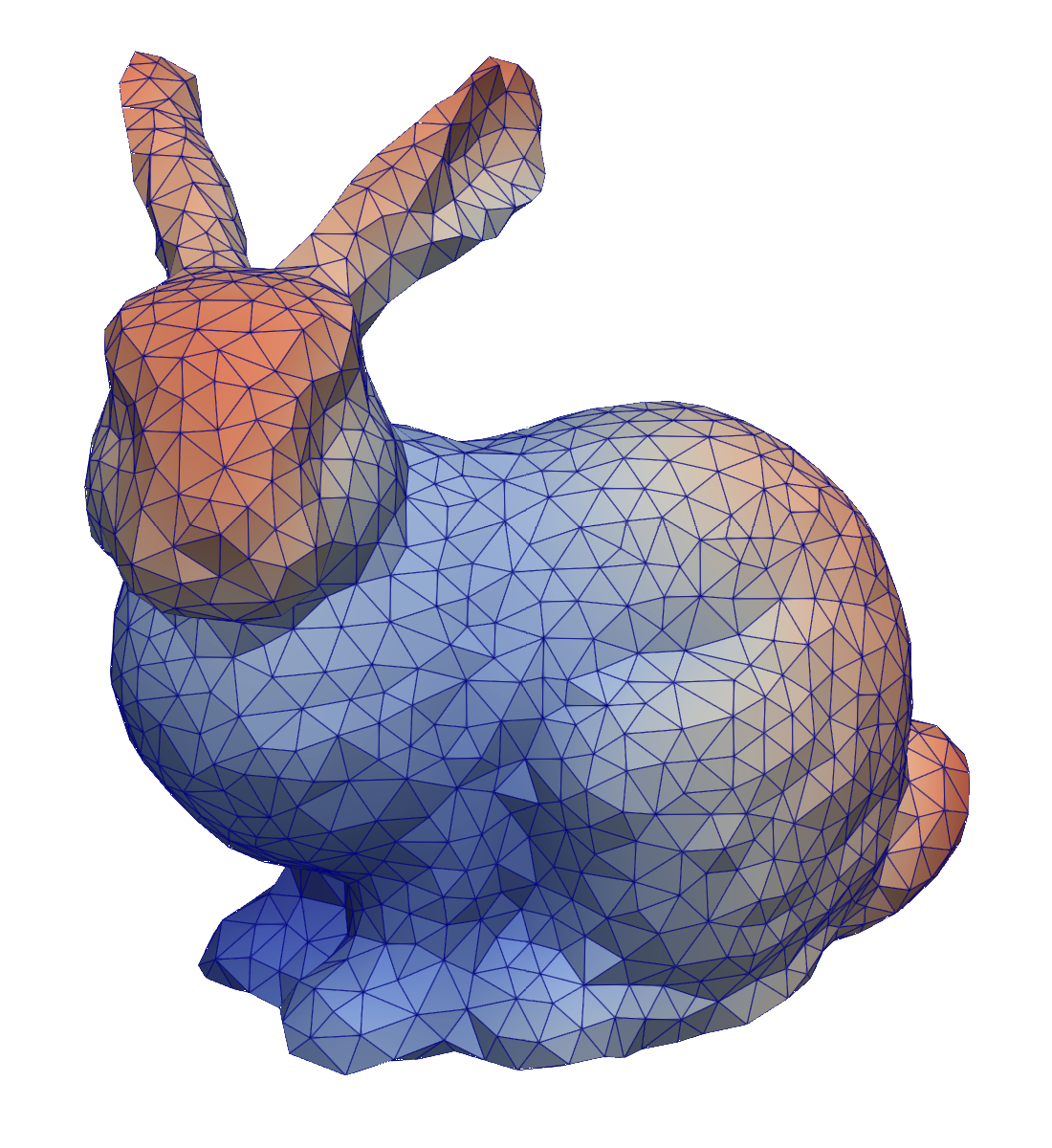}
  \includegraphics[width=0.32\textwidth]{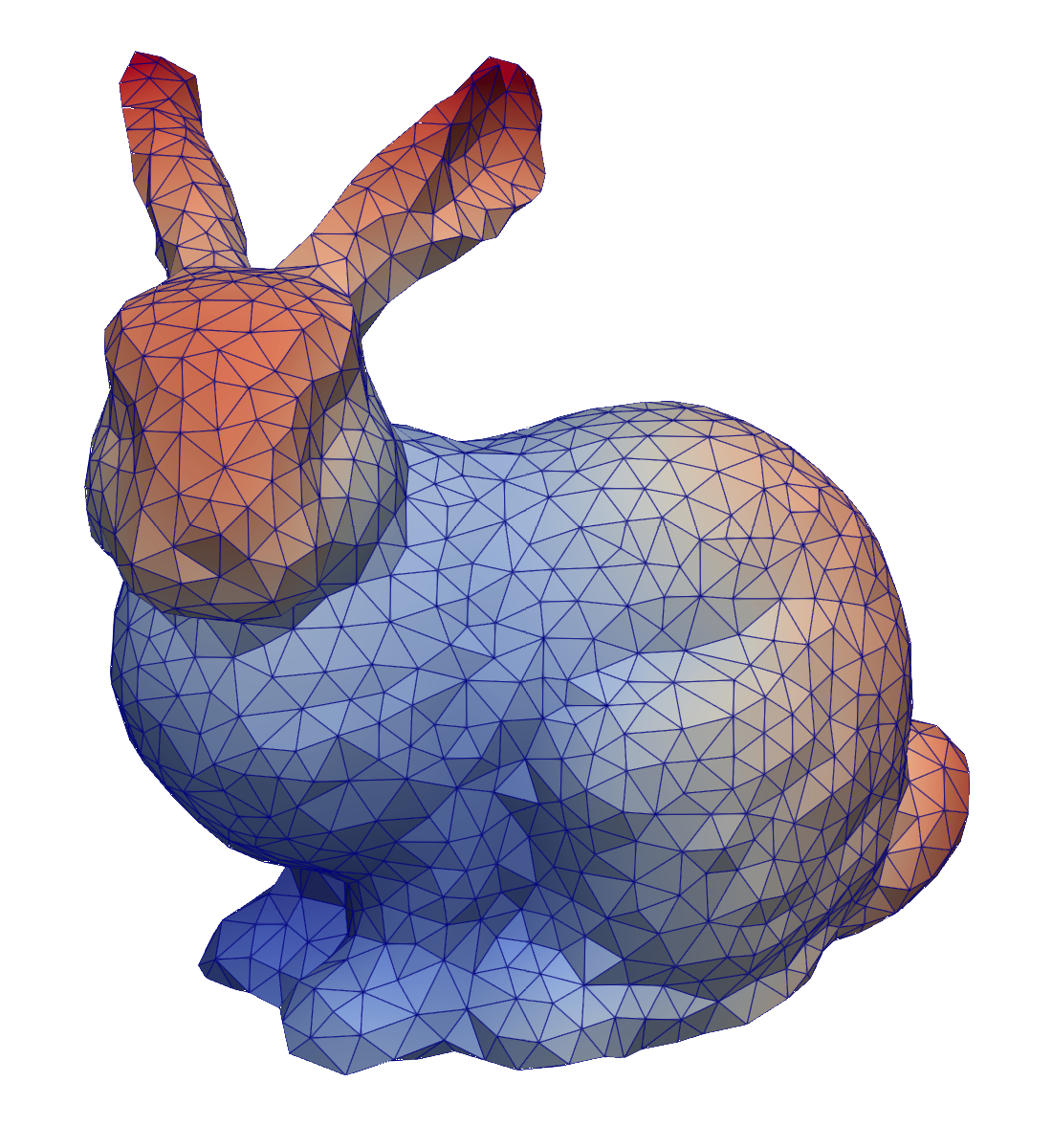}
  \includegraphics[width=0.32\textwidth]{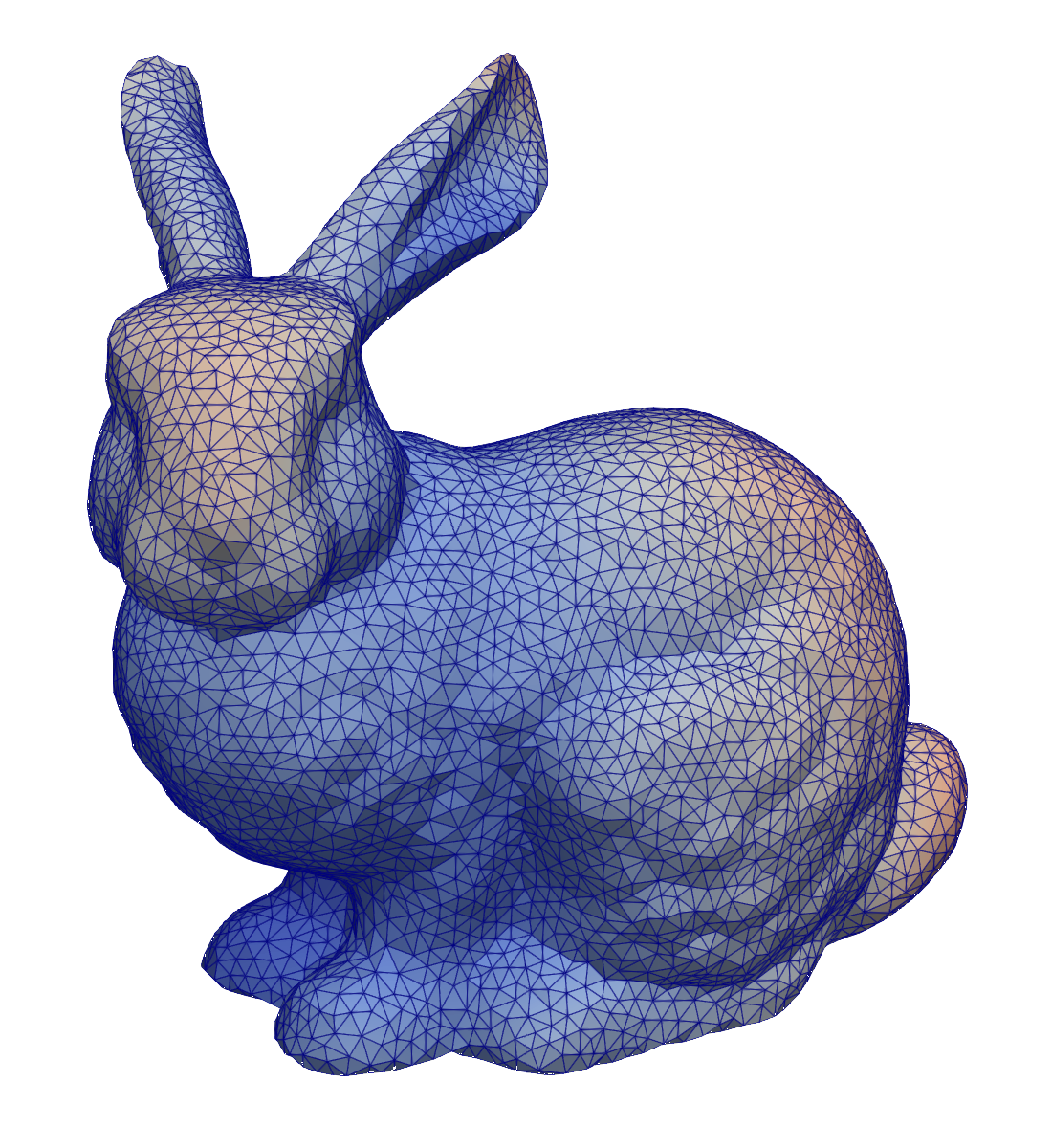}
  \includegraphics[width=0.32\textwidth]{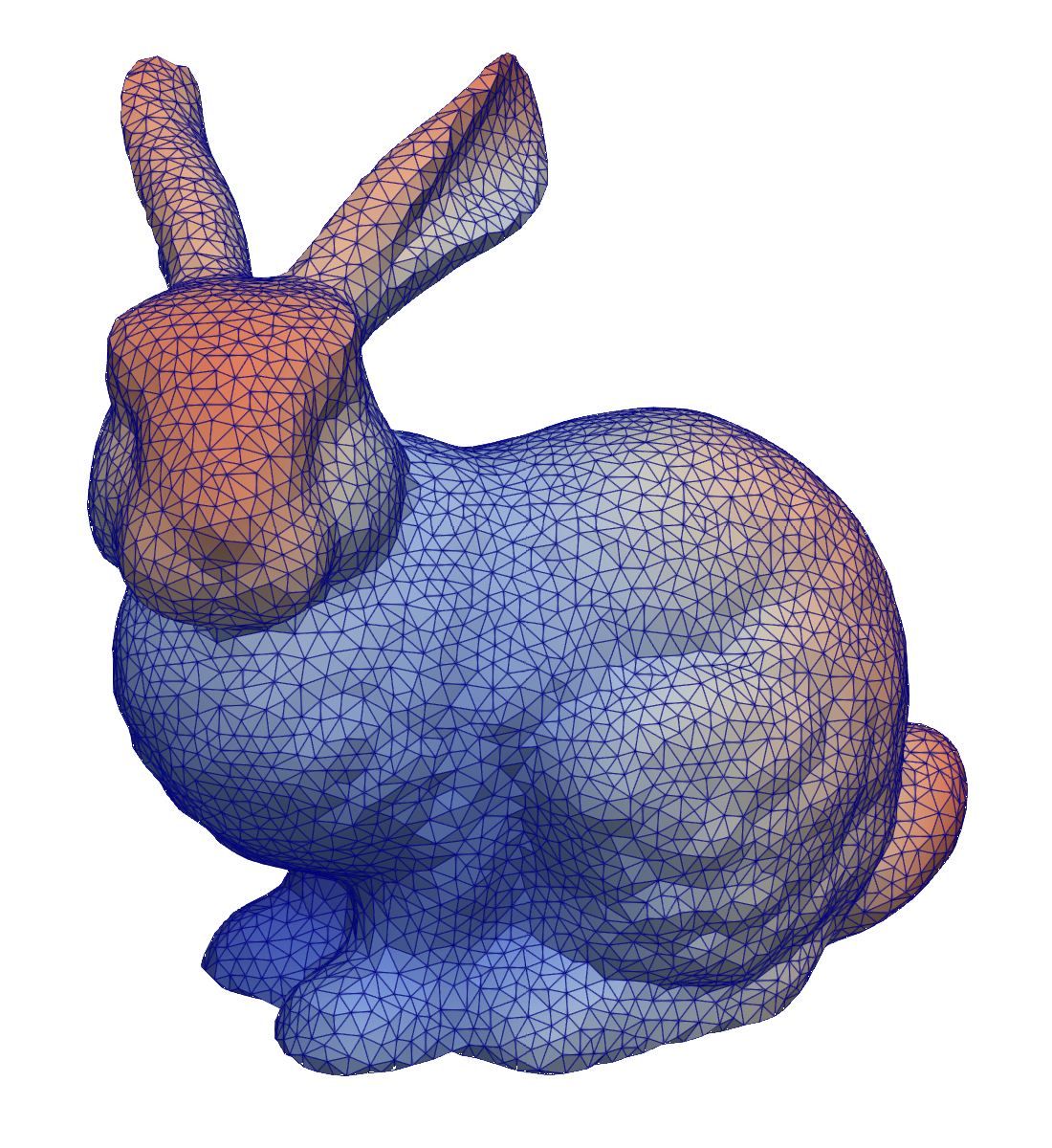}
  \includegraphics[width=0.32\textwidth]{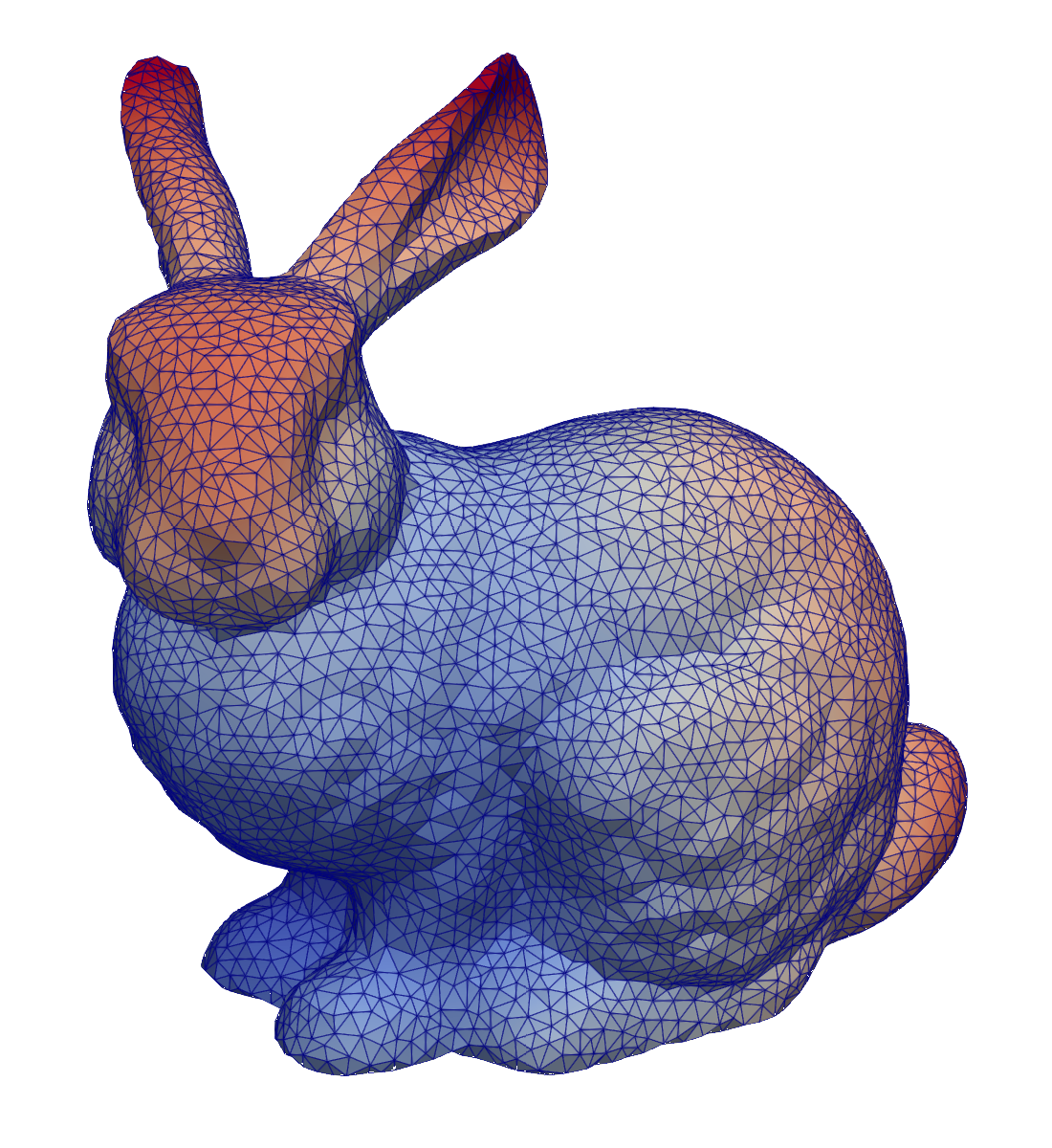}
  \includegraphics[width=0.95\textwidth]{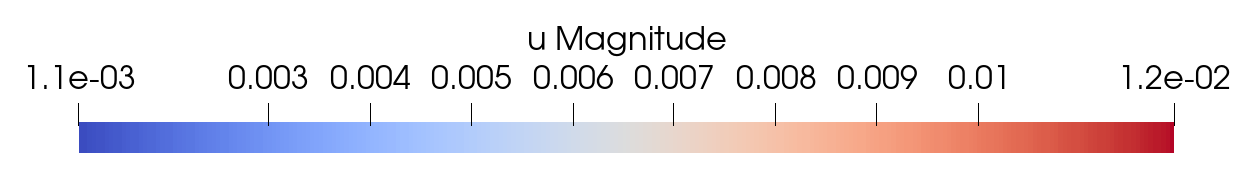}
  \caption{Deformation fields on the thermoelastic bunny-like solid.
    Top row: coarse mesh; center row: intermediate mesh; bottom row: fine mesh.
  Left column: $\bar{\eta}=1$; center column: $\bar{\eta}=0.1$; right column: $\bar{\eta}=0.01$.}
  \label{fig-bunny-u}
\end{figure}

Figure~\ref{fig-bunnies} shows the imposed temperature field
on each of the three meshes considered. The small, medium,
and large meshes have, respectively, 2319, 19138, and 152706 elements.
The problem has no Dirichlet boundary condition, and, given
the complexity of the model geometry, it is a
clear example where imposing constraints to preclude rigid body
motions, \emph{in all meshes}, is non-trivial.

\begin{figure}[ht]
  \centering
  \includegraphics[width=0.9\textwidth]{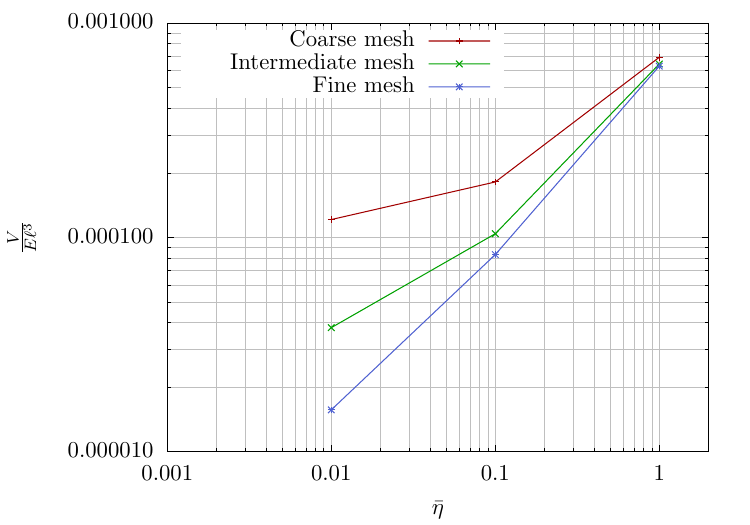}
  \caption{Bunny-like thermoelastic solid. Normalized potential energy of the solution in the
    regularized method as a function of the mesh size
  and the regularizing parameter. The exact solution has zero potential energy.}
  \label{fig-bunny-conv}
\end{figure}

We solve the Neumann problem for the three meshes using the regularized formulation presented in this work, and we study the
effects of mesh size and the regularizing parameter.
Figure~\ref{fig-bunny-u} shows the displacement
field for three different meshes and $\bar{\eta}=1,0.1$, and $0.01$, where we
have selected $\ell=$0.1 as the characteristic length of the solid.
This figure shows that as the regularizing parameter is reduced,
the displacement on the body is less constrained, thus the
maximum values increase.

As advanced, the theory predicts that a linear temperature field
should create no stresses on an isotropic thermoelastic solid if it
has no Dirichlet boundary conditions. The regularization that we have
introduced spoils this property, and thus some stored energy
is to be expected. Nevertheless, this energy should vanish as $\bar{\eta}\to0$. Figure~\ref{fig-bunny-conv}
shows the value of the (normalized) potential energy of the solid as
the regularizing parameter is reduced, for the three meshes employed in
Figures~\ref{fig-bunnies} and~\ref{fig-bunny-u}. This energy monotonically
decreases when either the mesh size or $\bar{\eta}$ goes to zero, indicating
that the method recovers the exact, stress-free, configuration in the limit.

\section{Summary}
\label{sec-summary}
The Neumann problem of elasticity appears often in Engineering practice. Assuming
that the loading satisfies certain compatibility conditions, there are infinite
rigid body equivalent displacement fields that solve this boundary value problem, which
causes troubles with its numerical approximation. In finite element analysis, standard fixes to this
lack of uniqueness are computationally costly or not robust.

In this work, we have introduced a perturbed Neumann problem whose solution is unique and
arbitrarily close to the solution of the original problem, more precisely, to the one of minimal
$L_2$ norm. This result opens the door to finite element approximations of the Neumann problem that
are well-posed, robust, and can be obtained by adding a negligible additional cost to the solution
of a classical Dirichlet- or mixed-boundary value problem.

We have presented rigorous bounds for the $H^1$ error between the solutions of the original Neumann
problem (or rather, the solution with minimal $L^2$ norm) and the perturbed solution. These bounds
show that this error is proportional to the regularization parameter and, therefore, arbitrarily
closed. Also, we have shown that the finite element approximations of the perturbed problem
converge to the exact solution when both the mesh size and the perturbation parameter to zero.

Neumann problems with non-equilibrated loads have no solution. It happens, however, that the
discretization of a pure Neumann problem introduces perturbations that affect critically its
numerical solution. If a constrained formulation is employed, non-zero reactions are to be expected
at the constrained degrees of freedom and the obtained solution will not be centered; if a
regularized method is employed, its solution will not be centered either and the error bound
deteriorate. To solve these problems, a predictor/corrector regularized method has been proposed that provides optimal,
centered solutions without the need to use constraints, even if the load is not equilibrated.

The main idea of this work has broader implications. In fact, one could prove that given an elliptic
problem whose solution is in a closed subspace of a larger linear space, one can perturb the
energy functional and show the convergence of the regularized solution, and its finite element
approximation, to the original one. In the linear setting, this statement can be proved using
similar arguments to the one employed in this work. In general, $\Gamma-$convergence arguments can
be used to prove these properties. The aforementioned generalizations can be found in a sequel to
the current article \cite{gebhardt2025ub}.

%--------------------------------------------------------------------------------------------------
%                                       End of the document
%--------------------------------------------------------------------------------------------------

\begin{acknowledgements}
  \myack
\end{acknowledgements}

\appendix
\section{Implementation details}
The regularized solution to the pure traction problem has a simple implementation, and it does not add
any extra degree of freedom to a finite element solution of small strain mechanics. For completeness, we add a condensed summary of the details required to implement the proposed formulation in the context of a finite element solution. We consider thus a finite element mesh with node set $\mathcal{N}$. Denoting as $N^a:\Omega\to\mathbb{R}$ the shape function associated to node $a\in\mathcal{N}$, the residual vector associated to this node is
\begin{equation}
\begin{split}
\mbs{R}^a =& \int_\Omega
\left[
\mbs{\sigma}(\mbs{x})\nabla N^a(\mbs{x})
+
\eta\, \mbs{u}^h(\mbs{x})\,N^a(\mbs{x})
- \mbs{f}(\mbs{x})\, N^a(\mbs{x})
\right]\,dV\\
&- \int_{\partial\Omega} \mbs{t}(\mbs{x}) N^a(\mbs{x})\,dA .
\end{split}
\end{equation}
where $\mbs{\sigma}=\lambda\, \mathrm{tr}[\mbs{\varepsilon}]\mbs{I} + 2\mu\,\mbs{\varepsilon}$ is the stress tensor and $\mbs{\varepsilon}=\nabla^s \mbs{u}^h$. The block of the stiffness matrix corresponding to nodes $a,b\in \mathcal{N}$ evaluates to
\begin{equation}
\begin{split}
\mbs{k}^{ab} =&
                \int_\Omega \lambda \nabla N^a(\mbs{x})\otimes \nabla N^b(\mbs{x}) \; dV
  \\
              &+
                \int_{\Omega}
\mu\left( \nabla N^b(\mbs{x})\otimes\nabla N^a(\mbs{x}) +
(\nabla N^b(\mbs{x})\cdot\nabla N^a(\mbs{x})) \mbs{I} \right) \,dV
\\
              &+
                \int_\Omega \eta\,N^a(\mbs{x})\,N^b(\mbs{x})\,\mbs{I}
\,dV
\end{split}
\end{equation}
where $\mbs{I}$ is the identity tensor and $\otimes$ is the dyadic product of vectors. No additional
residual equations are required for the regularized form since the formulation does not introduce
any degrees of freedom. The modified stiffness matrix includes one extra term on the diagonal of
each block $\mbs{k}^{ab}$ but does not modify the profile of the global tangent matrix. The
additional cost of the regularized solution is, therefore, negligible.

\bibliographystyle{\mybibstyle}
\bibliography{biblio}

\end{document}

%%% Local Variables:
%%% mode: LaTeX
%%% TeX-master: t
%%% End: